\documentclass[12pt, reqno]{amsart}
\usepackage{graphicx,colonequals} 
\usepackage{amsmath,amsfonts,amssymb,fullpage}
\usepackage[colorlinks=true,linkcolor=blue]{hyperref}

\newtheorem{theorem}{Theorem}[section]
\newtheorem{lemma}[theorem]{Lemma}
\newtheorem{prop}[theorem]{Proposition}

\newtheorem{remark}[theorem]{Remark}

\newcommand{\R}{\mathbb{R}}
\newcommand{\E}{\mathbb{E}}
\renewcommand{\P}{\mathbb{P}}
\renewcommand{\L}{\mathcal{L}}
\newcommand{\M}{\mathcal{M}}
\newcommand{\N}{\mathcal{N}}
\newcommand{\sgn}{\mathrm{sign}}

\renewcommand{\epsilon}{\varepsilon}

\title{A Lower Bound for Grothendieck's Constant}
\author{Steven Heilman}
\date{\today}

\begin{document}

\begin{abstract}
We show that Grothendieck's real constant $K_{G}$ satisfies $K_G\geq c+10^{-26}$, improving on the lower bound of $c=1.676956674215576\ldots$ of Davie and Reeds from 1984 and 1991, respectively.
\end{abstract}

\thanks{
Email: stevenmheilman@gmail.com\\
Supported by NSF Grant CCF 2448108\\
Department of Mathematics, University of Southern California, Los Angeles, CA 90089\\ MSC 2020 Classification: 15A60, 15A45, 90C27, 90C22, 42C10, 68Q17\\
Keywords: Grothendieck inequality, Grothendieck constant, Rounding schemes, Semidefinite programming, inequalities, computational complexity}



\maketitle

\section{Introduction}

Grothendieck's real constant $K_{G}$ is the infimum over all $K\in(0,\infty)$ such that, for all positive integers $m,n$ and for every real $m\times n$ matrix $(a_{ij})$, we have
\begin{equation}\label{groeq}
\max_{\substack{x_{1},\ldots,x_{m}\\y_{1},\ldots,y_{n}}\in S^{m+n-1}}\sum_{i=1}^{m}\sum_{j=1}^{n}a_{ij}\langle x_{i},y_{j}\rangle
\leq K\cdot
\max_{\substack{\epsilon_{1},\ldots,\epsilon_{m}\\\delta_{1},\ldots,\delta_{n}}\in\{-1,1\}}\sum_{i=1}^{m}\sum_{j=1}^{n}a_{ij}\epsilon_{i}\delta_{j},
\end{equation}
where $\langle x,y\rangle=\sum_{i=1}^{d}x_{i}y_{i}$ for all $x,y\in\R^{d}$ and $S^{d-1}=\{x\in\R^{d}\colon\langle x,x\rangle=1\}$ for any $d\geq1$.

Inequality \eqref{groeq} was originally stated as an inequality of two different tensor norms \cite{groth53} (see also \cite[Section 3]{pisier12}), though the discretized formulation \eqref{groeq} was proven in \cite{linden77}.

Determining the exact value of $K_{G}$ remains a significant open problem since it was first posited in \cite{groth53}.  We can rephrase the problem of finding the constant $K_{G}$ as: what is the ``best'' way to ``round'' the vectors $x_{1},\ldots,x_{m},y_{1},\ldots,y_{n}$ to $\pm1$?  There are many good references on \eqref{groeq}, and its interest in combinatorics, functional analysis, Banach space theory, operator algebras and the Connes embedding problem, theoretical computer science, quantum mechanics, etc. such as \cite{pisier12,khot12}.  We briefly mention some interpretations of Grothendieck's constant in quantum information and theoretical computer science:
\begin{itemize}
\item Grothendieck's constant is the maximal quantum violation in Bell's inequality from quantum mechanics \cite{tsirelson87}.  That is, Grothendieck's inequality is a reformulated version of Bell's inequality.
\item Assuming the Unique Games Conjecture \cite{khot02}, it is NP-hard to approximate the right side of \eqref{groeq} within any constant smaller than Grothendieck's constant $K_{G}$ \cite{ragh09}.  The left side of \eqref{groeq} is a semidefinite program which can be computed efficiently, while the right side is an integer program.  So, \eqref{groeq} itself efficiently approximates the right side of \eqref{groeq} using its left side, within a constant factor $K_{G}$ \cite{alon04}.  The cut norm and the MAX-CUT problem are special cases of the right side of \eqref{groeq} \cite{alon04}.
\end{itemize}

\subsection{Upper Bounds on \texorpdfstring{$K_{G}$}{KG}}
\cite{groth53} originally proved that $K_{G}\leq\sinh(\pi/2)\approx2.3013$.  In \cite{krivine77}, Krivine showed that $K_{G}\leq\frac{\pi}{2\log(1+\sqrt{2})}\approx 1.78221397819$, and it was generally believed that this inequality should be an equality \cite{konig01}.  However, it was then shown in \cite{braverman13} that there is a $c'>0$ such that $$K_{G}<\frac{\pi}{2\log(1+\sqrt{2})} - c'.$$
An effective $c'$ was proven but not specified in \cite{braverman13}; an inspection of the argument seems to give $c'=10^{-500}$.  Krivine's argument \cite{krivine77} shows that, after ``preprocessing'' the vectors $x_{1},\ldots,x_{m},y_{1},\ldots,y_{n}$ by nonlinearly mapping them to a different Hilbert space (Fock space), we can then map those vectors to $\pm1$ by projecting them onto a Gaussian random vector, and taking the sign of this projected value.  (The nonlinear preprocessing removes the nonlinearity that appears after projecting onto the Gaussian.)  The argument of \cite{braverman13} instead projects the preprocessed vectors onto a random plane through the origin, and then (with probability $0<p<1$) applies a perturbation of the sign function on this two-dimensional plane to ``round'' the vectors to $\pm1$ (and with probability $1-p$ applies the sign function).  Also, rounding schemes of this form can obtain arbitrarily good approximations of $K_{G}$ \cite{regev14}.  In other words, finding the exact value of $K_{G}$ reduces to finding the best ``rounding scheme'' for vectors $x_{1},\ldots,x_{m},y_{1},\ldots,y_{n}$ in \eqref{groeq}.

In contrast to these upper bounds, lower bounds for $K_{G}$ result from finding a matrix where the ratio of both sides of \eqref{groeq} is far from $1$.

\subsection{Lower Bounds on \texorpdfstring{$K_{G}$}{KG}}
It was shown in \cite{groth53} that, if we add the restriction in \eqref{groeq} that $(a_{ij})$ is symmetric positive semidefinite, then the best constant in \eqref{groeq} becomes equal to $\pi/2\approx 1.57079632679$.  Consequently, $K_{G}\geq\pi/2$.  Achieving better lower bounds on $K_{G}$ therefore requires considering matrices that are not symmetric positive semidefinite (such as the infinite-dimensional matrix $R_{\lambda}$ in \eqref{rdef}).  Independently of each other, \cite{reeds93} and \cite{davie84} showed (using \eqref{rdef}) that
\begin{equation}\label{lbeq}
K_{G}\geq
1.676956674215576237077855078853\ldots
\end{equation}
While the upper bound of $\pi/(2\log(1+\sqrt{2}))$ was previously believed to be an equality \cite{krivine77}, the lower bound \eqref{lbeq} was not believed to be an equality.  Nevertheless, no better lower bound has been described in the literature since 1984.

\subsection{Brief Review of the Existing Lower Bound}\label{briefsec}
To prove \eqref{lbeq}, \cite{reeds93} and \cite{davie84} consider the following linear operator for some fixed $0<\lambda<1$.
\begin{equation}\label{rdef}
R_{\lambda}\colonequals P_{1}-\lambda I
\end{equation}
on the Hilbert space $L_{2}(\gamma_{n})=\{f\colon\R^{n}\to\R\colon \int_{\R^{n}}|f(x)|^{2}\gamma_{n}(x)dx<\infty\}$, where $I$ is the identity map (note that $R_{\lambda}$ is not positive definite),
$$
\gamma_{n}(x)\colonequals(2\pi)^{-n/2}e^{-\frac{1}{2}\|x\|_{\ell_{2}(\R^{n})}^{2}},\qquad\forall\,x\in\R^{n}.
$$
and $P_{1}$ is the projection onto the level one Hermite-Fourier coefficients, i.e.
\begin{equation}\label{p1def}
P_{1}f(x)\colonequals\int_{\R^{n}}\langle x,y\rangle f(y)\gamma_{n}(y)dy,\qquad\forall\,x\in\R^{n},\,\forall\,f\in L_{2}(\gamma_{n}).
\end{equation}
Then, interpreting $R_{\lambda}$ as an infinite-dimensional matrix in \eqref{groeq}, Davie and Reeds obtain
\begin{equation}\label{klb}
K_{G}\geq\sup_{n\geq1} \sup_{\lambda>0}
\,\frac{\sup_{g\colon\R^{n}\to B_{n}}\int_{\R^{n}}\|R_{\lambda}g(x)\|_{\ell_{2}(\R^{n})}\gamma_{n}(x)dx}{\sup_{f\colon\R^{n}\to[-1,1]}\int_{\R^{n}}|R_{\lambda}f(x)|\gamma_{n}(x)dx},
\end{equation}
where $\| x\|_{\ell_{2}(\R^{n})}\colonequals\langle x,x\rangle^{1/2}$ for all $x\in\R^{n}$ and $B_{n}\colonequals\{x\in\R^{n}\colon\|x\|_{\ell_{2}(\R^{n})}\leq1\}$.  (The equivalence of \eqref{groeq} to a ratio of operator norms of maps from $L_{\infty}$ to $L_{1}$ as in \eqref{klb} is shown e.g. in the introduction of \cite{reeds93} or \cite[Theorem 2.5]{pisier12})

They then show that, as $n\to\infty$, the numerator of \eqref{klb} converges to $1-\lambda$ (which can be seen by considering the function $g(x)=x/\|x\|$), and the denominator is $(\lambda/\eta)^{2}+\lambda(1-4\Phi(-\eta))$, where $\eta\in(0,1)$ satisfies $\sqrt{\frac{2}{\pi}}\eta e^{-\eta^{2}/2}=\lambda$.  Choosing the optimal $\lambda$ which is 
$$\lambda_*\approx0.1974790909949819604066867498464070553745\ldots$$ then yields
$$
K_{G}\stackrel{\eqref{klb}}{\geq}
\frac{1-\lambda_*}{(\lambda_*/\eta_*)^{2}+\lambda_*(1-4\Phi(-\eta_*))}\approx 1.676956674215576237077855078853\ldots,
$$
where $\Phi(t)\colonequals\int_{-\infty}^{t}e^{-z^{2}/2}dz/\sqrt{2\pi}$, $\forall$ $t\in\R$ and $\eta_*\in(0,1)$ satisfies $\sqrt{\frac{2}{\pi}}\eta_* e^{-\eta_*^{2}/2}=\lambda_*$.  (We mention in passing that choosing $\lambda=0$ recovers the weaker bound $K_{G}\geq\pi/2$.)
%

It is natural to try to consider further perturbations of $R_{\lambda}$ by e.g. adding some multiples of $P_{j}$, where $P_{j}$ is the projection onto level $j$ Hermite-Fourier coefficients, and then use \eqref{klb} for such a perturbation of $R_{\lambda}$.  The main difficulty then becomes computing the denominator of \eqref{klb} for the perturbation of $R_{\lambda}$.  That is, we want to find some perturbation of $R_{\lambda}$ such that the numerator of \eqref{klb} is the same ($1-\lambda$ as $n\to\infty$), but the denominator is slightly smaller.

Concerning this strategy, Reeds \cite{reeds93} comments:

``It would be interesting to attempt the direct computation of the norm of some
more general operator $\sum_{j}\alpha_{j}P_{j}$, where the $\alpha_{j}$ are not all of the same sign, but the methods of this paper probably do not extend beyond the case where only one
of the $\alpha_{j}$ is positive.''

\subsection{Our Contribution}

We find that, in fact, the operator of Reeds can be perturbed slightly into the form
\begin{equation}\label{rlbdef}
R_{\lambda,\beta} = P_{1}-\lambda I - \beta P_{3}.
\end{equation}
so that \eqref{klb} implies an improved lower bound on $K_{G}$.

\begin{theorem}[\textbf{Main}]\label{mainthm}
$$K_{G}\geq 10^{-26}+c.$$
\end{theorem}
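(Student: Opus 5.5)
We will apply \eqref{klb} to the operator $R_{\lambda,\beta}=P_{1}-\lambda I-\beta P_{3}$ of \eqref{rlbdef}, with $\lambda$ near $\lambda_*$ and $\beta>0$ very small. The argument has three steps: compute the numerator, bound the denominator from above, and optimize.

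\textbf{Numerator.} We take $g(x)=x/\|x\|$ as in the Davie--Reeds argument. Each coordinate of $g$ is odd, and as $n\to\infty$ its level-$3$ Hermite component vanishes in $L_{2}(\gamma_{n})$: the function $\|x\|^{-1}$ concentrates around $n^{-1/2}$, so $x_i/\|x\|$ is asymptotically linear. Hence the extra term $\beta P_{3}g$ contributes $o(1)$, and the numerator still tends to $1-\lambda$. We will make this precise by computing $\|P_{3}(x_i/\|x\|)\|_{2}$ with polar coordinates and checking that it is $O(1/n)$.

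\textbf{Denominator.} We must show that
\[
\sup_{f\colon\R^{n}\to[-1,1]}\int|R_{\lambda,\beta}f|\,d\gamma_{n}\;\le\; D(\lambda)-c_{0}\beta+O(\beta^{2}),
\]
uniformly in $n$, for some $c_{0}>0$, where $D(\lambda)=(\lambda/\eta)^{2}+\lambda(1-4\Phi(-\eta))$ is the Davie--Reeds value. We use the duality $\int|Rf|=\sup_{h\colon\R^{n}\to[-1,1]}\int hRf$. Following Reeds, write $P_{1}f(x)=\langle v,x\rangle$ and $P_{3}f$ as a cubic harmonic polynomial $q$. For fixed $f$, the optimal $h$ is $\sgn(Rf)$. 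The Davie--Reeds extremizer is a one-dimensional function of $\langle v,x\rangle$, namely $f=\sgn$ with a linear part on $|t|<\eta$. Perturbation theory then says the first-order change in the supremum equals $-\beta\int h_*P_{3}f_*\,d\gamma$, evaluated at the extremal pair $(f_*,h_*)$. This requires the Davie--Reeds extremizers to be unique up to rotation and nondegenerate, so that the sup-of-sup is differentiable in $\beta$ (a Danskin-type argument). By rotation invariance the computation reduces to one dimension, where $P_{3}$ projects onto $H_{3}(t)=t^{3}-3t$. One checks that
\[
\int h_*H_{3}\,d\gamma_{1}\cdot\int f_*H_{3}\,d\gamma_{1}>0
\]
at $\lambda_*$, which gives the decrease. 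The sign of $\beta$ is chosen to make this happen.

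\textbf{Main obstacle.} The hard step is making the perturbation rigorous. We need a quantitative stability statement: any $f$ that nearly maximizes $\int|R_{\lambda}f|$ must be close to a rotated $f_*$, with a second-order (quadratic) loss away from it. With this, the $O(\beta^{2})$ error and the non-one-dimensional competitors $f$ can be controlled. Reeds' reduction shows that the maximization reduces to a two-parameter family, and we would try to extract explicit constants from that proof. Since everything is explicit, the resulting gain is tiny, which explains a margin as small as $10^{-26}$.

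Finally, we take $\beta$ of order $c_{0}/C$, where $C$ is the constant in the quadratic error. Provided the first-order gain is nonzero, this yields $K_{G}\ge c+10^{-26}$ once the constants are evaluated numerically with interval arithmetic.
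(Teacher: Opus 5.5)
\textbf{There is a genuine gap: the first-order drop is not established.} Your outer framework matches the paper: the operator $R_{\lambda,\beta}=P_1-\lambda I-\beta P_3$, the numerator $1-\lambda$ via $x/\|x\|$, and a linear-in-$\beta$ drop of the denominator near the maximizers. But the step meant to produce that drop rests on a false premise.

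The maximizers of $\|R_{\lambda_*}\|_{\infty\to1}$ are not unique up to rotation. By Lemmas \ref{maxchar} and \ref{lem:maximizers-are-pm1}, $\M$ consists of all $\{\pm1\}$-valued $g$ with $P_1g=\alpha_*Z$, $g=\sgn(Z)$ on $\{|Z|>\eta_*\}$, and vanishing inner moment. This is an infinite family already for $n=1$. For $n\ge2$ it contains functions that depend on the transverse variable $Y$ inside the strip $\{|Z|<\eta_*\}$, e.g.\ $g=\sgn(Z)\sgn(Y_1^2-c)$ there, with $c$ the median of $Y_1^2$. Since maximizers are $\pm1$-valued, no maximizer is ``sign with a linear part.''

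So any Danskin-type first-order term is governed by $\inf_{g\in\M}\langle\sgn(R_\lambda g),P_3g\rangle$. Checking $\int h_*H_3\,d\gamma_1\cdot\int f_*H_3\,d\gamma_1>0$ for one one-dimensional extremizer says nothing about this infimum. The missing idea is the decomposition in Lemma \ref{lem:P3pairing-polished}. Because $\sgn(R_\lambda g)=g$ off the strip and $-g$ on it, the pairing equals $\frac16(B+A)(B-A)-\|P_\perp g\|_2^2$. Here $A$ depends on the inner profile, and the transverse third-chaos mass enters with a minus sign; it is nonzero for the example above.

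The one-dimensional maximizers force $\beta>0$, so the sign cannot be adapted to each maximizer. You must then show, uniformly over $\M$, that $(B^2-A^2)/6\ge\kappa_Q\approx0.0868$ (via Lemma \ref{lem:A-bound}) beats $\|P_\perp g\|_2^2\le p^2+s_1^2+t_2^2/2\approx0.0414$. Without such a bound the derivative could be nonpositive at some maximizer, and the method would give nothing.

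\textbf{The stability step is unresolved, and harder than you state.} It is more than showing ``close to a rotated $f_*$,'' because closeness must be to the whole set $\M$. The available gap estimates (the dual certificate of Lemma \ref{lem:dual-gap}, and Lemmas \ref{tailv2} and \ref{lem:Delta-large-gap}) only control the $L_1(\gamma_1)$ distance of the conditional profile $\theta_f$ to $\Theta$. That does not directly bound $\mathrm{dist}_1(f,\M)$ in $L_1(\gamma_n)$, since the two notions of closeness are not comparable.

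The paper bridges this in two ways. First, it builds an explicit $h\in\N$ close to $f$ and invokes the extreme-point argument of Lemma \ref{convlem}. Second, it proves that the pairing stays positive on a fixed $L_1$-neighborhood of $\M$ (Lemma \ref{lem:pairing-stability}), using small-ball bounds for $R_\lambda g$ and hypercontractivity of $P_3$. Away from that neighborhood it obtains only $\|R_{\lambda,\beta}f\|_1\le\|R_\lambda\|_{\infty\to1}-c(\epsilon)+\beta$, with $c(\epsilon)$ of order $10^{-24}$. It is this, not a balance $\beta\sim c_0/C$, that dictates $\beta\approx 8\cdot10^{-25}$ and the final margin.
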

Here $c=1.676956674215576237077855078853\ldots$ is the lower bound proven by \cite{davie84,reeds93}.

Instead of exactly computing the norm of $R_{\lambda,\beta}$ (which could be difficult since functions achieving the operator norm should have a ``high-dimensional'' structure, unlike the case $\beta=0$ where the optimizing functions have a ``one-dimensional'' structure), we instead indirectly estimate the operator norm of $R_{\lambda,\beta}$.

The argument proceeds by a simple perturbation, together with a characterization of the maximizers of $R_{\lambda}$, which is already apparent from \cite{reeds93}.  Let $\M$ be the set of measurable functions $f\colon\R^{n}\to[-1,1]$ that are maximizers of $\|R_{\lambda}f\|_{1}$.  (From Lemma \ref{lem:maximizers-are-pm1}, all such maximizers take values in $\{-1,1\}$ almost surely.)  Theorem \ref{mainthm} follows from the following:

\begin{itemize}
\item[(a)] There is some $\epsilon>0$ such that, if $f$ is within $L_{1}(\gamma_{n})$ distance $\epsilon$ from $\M$, then for all $0<\beta<10^{-10}$, $\|R_{\lambda,\beta}f\|_{1}\leq\|R_{\lambda}\|_{\infty\to1} - (.0057)\beta$.  (And $\epsilon=10^{-7}$ suffices.)
\item[(b)] If $f$ has $L_{1}(\gamma_{n})$ distance at least $\epsilon$ from $\M$, then $\|R_{\lambda,\beta}f\|_{1}\leq\|R_{\lambda}\|_{\infty\to1} -\epsilon^{2}10^{-12} + \beta$.

\end{itemize}
Here $\|R_{\lambda}\|_{\infty\to1}\colonequals\sup_{g\colon\R^{n}\to[-1,1]}\int_{\R^{n}}|R_{\lambda}g(x)|\gamma_{n}(x)dx$.  It is crucial in (a) that $\epsilon>0$ is fixed, i.e. that there is a fixed $\epsilon$ that works for all small $\beta>0$.

Combining (a) and (b) means, for $\beta$ small enough, 
$$\|R_{\lambda,\beta}\|_{\infty\to1}\leq\max(\|R_{\lambda}\|_{\infty\to1} - (.0057)\beta,\,\,\|R_{\lambda}\|_{\infty\to1} - \epsilon^{2}10^{-12}+\beta)
<\|R_{\lambda}\|_{\infty\to1}.$$
The strict inequality follows by choosing $\beta<10^{-12}\epsilon^{2}$.  Theorem \ref{mainthm} follows.  (We have presented Case (b) here informally for illustrative purposes; for the actual statement that we prove, we split into a few different cases, such as \eqref{seven1} and \eqref{beq}, culminating in \eqref{finalineq}.)

Most of our arguments use elementary inequalities.  Our main innovation is our strategy which reduces much of the problem to one-dimensional Gaussian analytic inequalities, by not directly considering maximizers of $\|R_{\lambda,\beta}\|_{\infty\to1}$.  Since our goal is to provide an explicit constant in Theorem \ref{mainthm}, this unfortunately increases the length of the paper.

\subsection{Technical Challenges}

One difficulty of part (a) of the strategy for proving Theorem \ref{mainthm} is that the set of maximizers $\M$ is infinite.  The set $\M$ can be characterized using the argument of Reeds \cite{reeds93} (see Lemma \ref{maxchar}), but we require our operator norm estimate for $R_{\lambda,\beta}$ to hold uniformly over all elements of (a neighborhood of) the infinite set $\M$.

The main difficulty with part (b) of our strategy for Theorem \ref{mainthm} is that modulus of continuity estimates for $R_{\lambda,\beta}$ are most naturally stated in terms of the $L_{1}$ norm of functions $f$ on $\R^{n}$, whereas modulus of continuity estimates for $R_{\lambda}$ are most naturally stated in terms of the $L_{1}$ norm of the (one-dimensional) conditional expectation $\E (f|P_{1}f)$.  The first notion of closeness implies the second, but the second might not imply the first.  In order to connect these two different estimates, we then need to show: if $\E (f|P_{1}f)$ is close to $\{\E(g|P_{1}g)\colon g\in\M\}$ in $L_{1}$ norm (on $\R$), then $f$ is close to $\M$ in $L_{1}$ norm (on $\R^{n}$).  However, the most natural way to prove this statement fails since these two norms are incomparable.  If $g$ is the closest element to $f$ in $\M$, then $\E (g|P_{1}g)$ and $\E(f|P_{1}f)$ could a priori be close or far in $L_{1}$ norm, since the moment vectors $\int_{\R^{n}}xf(x)\gamma_{n}(x)dx$ and $\int_{\R^{n}}xg(x)\gamma_{n}(x)dx$ might not be parallel.  Since we want to maintain explicit constants, one cannot simply rotate $g$ so that its moment vector is parallel to $f$ and hope that the rotation is still close to $f$.  To circumvent this issue, we instead explicitly construct a function $h$ that is close to $f$ in $L_{1}$ norm, with $h$ satisfying the conditions of $\M$ except it takes values in $[-1,1]$, and from that we infer there is some $g\in\M$ closer to $f$ in $L_{1}$ norm (so that $|g|=1$).  This strategy avoids reasoning about the closest $g\in\M$.

\subsection{Organization}

We prove (a) above in several steps of increasing generality.  After reviewing the argument of Reeds in Section \ref{sectwo}, we then prove the $\epsilon=0$, $n=1$ case of (a).  Then (a) is proven in \eqref{upperconstants}, using a uniform third moment bound from Section \ref{secfive} together with some stability estimates in Section \ref{secsix}.  That is, a more formal statement of (a) is \eqref{upperconstants}.

We then move on to proving (b) in Section \ref{seceight}.  This is proven by some one-dimensional triangle inequalities and rearrangement arguments in Sections \ref{seceight}, \ref{secnine} and \ref{sectail}, together with Lemma \ref{convlem}.  Then (b) is proven in Case 2 of the proof of Theorem \ref{mainthm} in Section \ref{seceleven}.

Section \ref{seceleven} proves Theorem \ref{mainthm} by combining (a) and (b).

\subsection{Conclusions and Future Directions}

The main contribution of this paper is to demonstrate that the previous best lower bounds on Grothendieck's constant from 1984 can be improved, by modifying a method of proof that was viewed skeptically in \cite{reeds93}.  Instead of explicitly computing the operator norm of $R_{\lambda,\beta}$, we instead approximate it by proving one estimate for a neighborhood of the maximizers of $R_{\lambda}$, and another estimate for functions outside this neighborhood.  Computing the operator norm of $R_{\lambda,\beta}$ directly seems more difficult (as \cite{reeds93} expresses), since these optimizers should be ``high-dimensional,'' whereas optimizers of $R_{\lambda}$ are ``one-dimensional.''  However, it would be interesting to more accurately compute the $\infty\to1$ norm of $R_{\lambda,\beta}$, since we only approximate it using upper bounds that are probably far from the truth.

We have made some effort to sharpen the constants in our proof, but presumably they can be improved further, leading to better lower bounds on $K_{G}$.  Moreover, a perturbation of the form
$$P_{1}-\lambda I - \beta P_{3} - \delta P_{5}$$
could lead to an even better estimate on $K_{G}$.  Obtaining the best possible bound of this type is left to future work.  Instead, our contribution is conceptual, as we identify a strategy to give better lower bounds on $K_{G}$ that was doubted to work properly in \cite{reeds93}.

The upper bounds on $K_{G}$ from \cite{braverman13} were proven with a roughly similar strategy, since they considered a perturbation of the sign function by a fifth degree Hermite polynomial.  They could have used a third order Hermite polynomial instead, but chose not to, perhaps due to additional complications added to the argument.  The details of the strategy of \cite{braverman13} are fairly different and perhaps more sophisticated than ours.  Also, \cite{braverman13} does not provide an explicit upper bound on $K_{G}$.  It seems their argument shows that $K_{G}<\frac{\pi}{2\log(1+\sqrt{2})}-10^{-500}$, though a sharper analysis might prove a better upper bound.  So, in terms of absolute constant improvements, our improvement on the lower bound seems larger than their improvement on the upper bound.

\subsection{Remark on the Complex Case}

\cite{davie84} also  obtains the best known lower bound on the complex Grothendieck constant using the same approach as sketched above, i.e. optimizing the $\infty\to1$ norm over $\lambda$ for the operator $P_{1}-\lambda I$.  It seems plausible our result could also give an improved lower bound for this constant.  That is, for appropriate $\lambda,\beta$, an improved lower bound on Grothendieck's complex constant should be achievable by analyzing the operator norm of $P_{1}-\lambda I - \beta P_{3}$.  Some preliminary computations suggest our approach could work in the complex setting.  However, we leave this problem to future work.

\subsection{Summary of Notation}

\begin{itemize}
\item $\gamma_{n}(x)\colonequals e^{-\frac{1}{2}\|x\|_{\ell_{2}(\R^{n})}^{2}}(2\pi)^{-n/2}$, $\forall$ $x\in\R^{n}$.
\item $\gamma_{n}(A)\colonequals\int_{A}\gamma_{n}(x)dx$ for all measurable $A\subset\R^{n}$.
\item $P_{k}$ denotes projection onto the level $k$ Hermite-Fourier coefficients in $L_{2}(\R^{n},\gamma_{n})$.
\item $R_{\lambda}\colonequals P_{1}-\lambda I$, $\lambda\in\R$.
\item $R_{\lambda,\beta}\colonequals P_{1}-\lambda I - \beta P_{3}$, $\lambda,\beta\in\R$.
\item $H_{3}(z)=z^{3}-3z$, $\forall$ $z\in\R$.
\item $\theta(z)=\theta_{g}(z)\colonequals \E[g| P_{1}g=\alpha z]$, $\forall$ $z\in\R$, $g\colon\R^{n}\to[-1,1]$.
\item $\alpha \colonequals \|\int_{\R^{n}}xg(x)\gamma_{n}(x)dx\|_{\ell_{2}(\R^{n})}
=|\int_{\R}z\theta(z)\gamma_{1}(z)dz|$.
\item $\eta\colonequals\lambda/\alpha$ (if $\alpha\neq0$).
\item $\|f\|_{p}\colonequals(\int_{\R^{n}}|f(x)|^{p}\gamma_{n}(x)dx)^{1/p}$, $\forall$ $p\geq1$, $\forall$ $f\colon\R^{n}\to[-1,1]$.
\item $L_{2}(\gamma_{n})\colonequals\{f\colon\R^{n}\to\R\quad\colon \|f\|_{2}<\infty\}.$
\item $\langle f,g\rangle\colonequals\int_{\R^{n}}f(x)g(x)\gamma_{n}(x)dx$, $\forall$ $f,g\in L_{2}(\gamma_{n})$.
\item $\| R_{\lambda}\|_{\infty\to1}=\sup\{\|R_{\lambda}f\|_{1}\colon\quad f\colon\R^{n}\to[-1,1]\}$.
\item $\M=\M_{\lambda,n}\colonequals\{g\colon\R^n\to[-1,1]\ \colon\ \|R_\lambda g\|_1=\|R_\lambda\|_{\infty\to 1}\}.$
\item $\Theta=\{\theta_{g}\colon g\in\M\}$.
\item $\lambda_*\approx 0.197479091$.
\item $\alpha_*\approx 0.7722165032$.
\item $\eta_*\colonequals\lambda_*/\alpha_*\approx 0.255730213$.
\end{itemize}

Unless otherwise stated, we will always use $\lambda\colonequals\lambda_*$.

\section{Review of Reeds}\label{sectwo}

We provide more details for the sketch in Section \ref{briefsec} of the argument of Reeds proving \eqref{lbeq}, since we will extend this argument.

Let $g\colon\R^{n}\to\{-1,1\}$ be measurable.  Denote $\alpha\colonequals\|P_{1}g\|_{2}$.  
Denote $\theta_{g}(z)\colonequals\E[g\,|\, P_{1}g=\alpha z]$, for all $z\in\R$.  ($\E$ is taken with respect to $\gamma_{n}$.)  Then by its definition
$$
\int_{\R}z\theta_{g}(z)\gamma_{1}(z)dz = \alpha.
$$
\begin{equation}\label{noreq}
\begin{aligned}
&\int_{\R^{n}}|R_{\lambda}g(x)|\gamma_{n}(x)dx
\stackrel{\eqref{rdef}}{=}\int_{\R}\gamma_{1}(z)\Big(\P(g=1|P_{1}g=\alpha z)|\alpha z - \lambda|
+\P(g=-1|P_{1}g=\alpha z)|\alpha z + \lambda|\Big) dz\\
&\qquad\qquad=\int_{\R}\gamma_{1}(z)\Big(\Big(\frac{1}{2}+\frac{1}{2}\theta_{g}(z)\Big)|\alpha z - \lambda|
+\Big(\frac{1}{2}-\frac{1}{2}\theta_{g}(z)\Big)|\alpha z + \lambda|\Big) dz\\
&\qquad\qquad=\int_{\R}\gamma_{1}(z)\Big(\frac{1}{2}\Big(|\alpha z - \lambda|+|\alpha z + \lambda|\Big)
+\frac{1}{2}\theta_{g}(z)\Big(|\alpha z - \lambda|-|\alpha z + \lambda|\Big) dz\\
&\qquad\qquad=2\lambda\gamma_{1}([0,\lambda/\alpha])
+2\alpha e^{-\frac{\lambda^{2}}{2\alpha^{2}}}/\sqrt{2\pi}
+\int_{\R}\gamma_{1}(z)\frac{1}{2}\theta_{g}(z)\Big(|\alpha z - \lambda|-|\alpha z + \lambda|\Big) dz.
\end{aligned}
\end{equation}
If $\alpha,\lambda$ are fixed, then $\theta_{g}$ only appears in the last term.  We therefore focus on optimizing that integral term (with $\alpha,\lambda$ fixed).
Denote
$$\psi(z)\colonequals\frac{1}{2}(|\alpha z - \lambda| - |\alpha z + \lambda|),\qquad\forall\,z\in\R.$$
If $z>0$, then $\psi(z)= -\alpha z1_{(z<\lambda/\alpha)}-\lambda 1_{(z\geq\lambda/\alpha)}$, and
\begin{equation}\label{psieq}
\psi(z)/z= -\alpha 1_{(|z|<\lambda/\alpha)}-\frac{\lambda}{|z|} 1_{(|z|\geq\lambda/\alpha)},\qquad\forall\,z\in\R.
\end{equation}

So, we consider the problem of maximizing
\begin{equation}\label{maxeq}
\int_{\R}\gamma_{1}(z)\psi(z)\theta(z)dz
\end{equation}
subject to the constraints that $\alpha>0$ is fixed, and
\begin{equation}\label{coneq}
\int_{\R}\theta(z)z\gamma_{1}(z)dz=\alpha,\qquad|\theta(z)|\leq1,\qquad\forall\,z\in\R,\qquad\,\alpha>0.
\end{equation}


Note that $\psi(z)/z$ inside the integrand of \eqref{maxeq} is the only different term when comparing that integrand and the integrand in \eqref{coneq}.  Since evidently $\psi(z)/z$ is even, strictly increasing when $z>\lambda/\alpha$, and constant when $0<z<\lambda/\alpha$ by \eqref{psieq}, a $\theta$ maximizing this optimization problem must take the value $\mathrm{sign}(z)$ for any $|z|>\lambda/\alpha$, if the constraint \eqref{coneq} can be satisfied for such a $\theta$; if not, then $\theta$ must be $\mathrm{sign}(z)$ for some $h>\lambda/\alpha$, and $-1$ otherwise.

More specifically, when $\alpha>0$, let $h>0$ be the unique value satisfying
\begin{equation}\label{alphadef}
\alpha = \Big(\int_{h}^{\infty}-\int_{0}^{h}\Big)2z\gamma_{1}(z)dz
=\sqrt{\frac{2}{\pi}}(2e^{-h^{2}/2}-1).
\end{equation}

If $h>\lambda/\alpha$, then there is a unique $\theta$ maximizing \eqref{maxeq} subject to \eqref{coneq} (namely, $\theta$ is odd and $\theta(z)=\mathrm{sign}(z-h)$ for all $z>0$).  But if $h<\lambda/\alpha$, then the maximizer $\theta$ is not unique, since $\psi(z)/z$ is constant for all $|z|<\lambda/\alpha$, and we only have $\theta(z)=\mathrm{sign}(z)$ for all $|z|>\lambda/\alpha$.  Denote $\eta\colonequals\lambda/\alpha$.  For such a $\theta$ we have
\begin{equation}\label{alphadef2}
\alpha \stackrel{\eqref{coneq}}{=}
\int_{-\eta}^{\eta}\theta(z)z\gamma_{1}(z)dz + \int_{|z|>\eta}|z|\gamma_{1}(z)dz
=\int_{-\eta}^{\eta}\theta(z)z\gamma_{1}(z)dz + 2\gamma_{1}(\eta).
\end{equation}
If we choose the right parameters, we can then characterize the maximizing functions $\theta$.
%

%
%

\begin{lemma}[Characterization of Maximizing Profiles]\label{maxchar}
Let $0<\lambda_{*}<1$.  Assume $h<\lambda_{*}/\alpha_{*}$.  Assume $\sqrt{\frac{2}{\pi}}\eta_{*} e^{-\eta_{*}^{2}/2}=\lambda_{*}$.  (So that $\gamma_{1}(\eta_{*})=\alpha_{*}/2$.)  Let $\Theta$ be the set of $\theta$ maximizing \eqref{maxeq} subject to constraints \eqref{coneq}.  Then $\Theta$ consists of all $\theta\colon\R\to[-1,1]$ such that
\begin{itemize}
\item $\theta(z) = \mathrm{sign}(z)$, $\forall$ $|z|>\eta_{*}$.
\item $\int_{-\eta_{*}}^{\eta_{*}}\theta(z)z\gamma_{1}(z)dz=0$.
\end{itemize}
\end{lemma}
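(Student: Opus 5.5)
We read the lemma as a statement about the linear program \eqref{maxeq}–\eqref{coneq} with $\alpha=\alpha_*$ fixed, $\lambda=\lambda_*$, so that $\eta_*=\lambda_*/\alpha_*$ and, by the stated normalization, $2\gamma_1(\eta_*)=\alpha_*$. The plan is a Lagrange-multiplier (weak duality) argument. The point is that $\psi(z)$ is bounded above by a multiple of $z\,\theta$-type terms whenever $\theta$ satisfies the moment constraint.

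\textbf{Step 1: a pointwise inequality.} We fix the multiplier $-\alpha_*$ and write, for any admissible $\theta$,
\[
\int_\R \psi\theta\gamma_1 = \int_\R\bigl(\psi(z)+\alpha_* z\bigr)\theta(z)\gamma_1(z)\,dz - \alpha_*\int_\R z\theta\gamma_1 = \int_\R\bigl(\psi(z)+\alpha_* z\bigr)\theta(z)\gamma_1(z)\,dz - \alpha_*^2 .
\]
By \eqref{psieq}, $\psi(z)+\alpha_*z=0$ for $|z|<\eta_*$. For $|z|\ge\eta_*$ it equals $\sgn(z)(\alpha_*|z|-\lambda_*)$, which has sign $\sgn(z)$ and is nonzero for $|z|>\eta_*$. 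Since $|\theta|\le1$,
\[
\int_\R\bigl(\psi+\alpha_* z\bigr)\theta\gamma_1 \le \int_{|z|>\eta_*}(\alpha_*|z|-\lambda_*)\gamma_1(z)\,dz ,
\]
with equality if and only if $\theta(z)=\sgn(z)$ for a.e.\ $|z|>\eta_*$, because the weight is strictly positive there. This gives a universal upper bound $U$ for \eqref{maxeq} under \eqref{coneq}.

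\textbf{Step 2: attainability and characterization.} The bound $U$ is attained exactly by those $\theta$ with $\theta=\sgn$ on $|z|>\eta_*$ that also satisfy the constraint. By \eqref{alphadef2}, such a $\theta$ satisfies \eqref{coneq} if and only if
\[
\int_{-\eta_*}^{\eta_*}\theta z\gamma_1 + 2\gamma_1(\eta_*) = \alpha_* .
\]
By the normalization $2\gamma_1(\eta_*)=\alpha_*$, this is equivalent to $\int_{-\eta_*}^{\eta_*}\theta z\gamma_1=0$. This set is nonempty; for example, $\theta$ odd on $(-\eta_*,\eta_*)$ is excluded, but $\theta\equiv0$ there works, as does any even $\theta$. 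Hence the supremum equals $U$, and $\Theta$ is exactly the described set, up to null sets.

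We also check the normalization: $\gamma_1(\eta_*)=e^{-\eta_*^2/2}/\sqrt{2\pi}$, and $\lambda_*=\eta_*\alpha_*$ together with $\sqrt{2/\pi}\,\eta_*e^{-\eta_*^2/2}=\lambda_*$ gives $\alpha_*=2\gamma_1(\eta_*)$. The hypothesis $h<\lambda_*/\alpha_*$ is then automatic or merely consistent. Indeed, $h$ is defined by $\alpha_*=\sqrt{2/\pi}(2e^{-h^2/2}-1)$, and $h<\eta_*$ is just the statement that the constraint cannot be met with $\theta=\sgn$ everywhere, since $\int z\,\sgn(z)\gamma_1=\sqrt{2/\pi}>\alpha_*$. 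That fact is not even needed in the duality argument above.

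\textbf{Main obstacle.} The main subtlety is the equality case. We must argue a.e.\ uniqueness outside $[-\eta_*,\eta_*]$ from strict positivity of the weight $(\alpha_*|z|-\lambda_*)\gamma_1$ on $|z|>\eta_*$. We must also make sure the multiplier is exactly $\alpha_*$, so that the weight vanishes identically on $(-\eta_*,\eta_*)$. This uses the specific form \eqref{psieq} and the choice $\eta_*=\lambda_*/\alpha_*$, and it is where the constant piece of $\psi(z)/z$ produces the non-uniqueness of maximizers.
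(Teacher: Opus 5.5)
Your proof is correct, but it is not the argument the paper gives for this lemma.

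\textbf{The paper's route.} The paper justifies Lemma \ref{maxchar} only through the remark preceding it. By \eqref{psieq}, $\psi(z)/z$ is even, equal to the constant $-\alpha$ on $|z|<\eta$, and strictly increasing in $|z|$ beyond $\eta$. Viewing $z\theta(z)\gamma_{1}(z)\,dz$ as a signed mass of prescribed total $\alpha$, a rearrangement (bathtub) reasoning says a maximizer puts full positive mass, $\theta=\mathrm{sign}(z)$, wherever $\psi(z)/z$ exceeds its minimum. This requires compatibility with the constraint, which is what $h<\lambda_*/\alpha_*$ encodes. Where $\psi(z)/z$ is flat, $\theta$ is unconstrained. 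Then \eqref{alphadef2} with $2\gamma_{1}(\eta_{*})=\alpha_{*}$ turns the constraint into the vanishing inner moment.

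\textbf{Your route.} Your weak-duality argument with multiplier $-\alpha_*$ is exactly the dual certificate $\mu=-\alpha$, $S=B+\alpha z$ that the paper introduces only later, in Lemma \ref{lem:dual-gap}. Your Step 1 is the case $\alpha=\alpha_*$ of the gap identity \eqref{eq:gap-formula}. It buys several things:
\begin{itemize}
\item a complete proof rather than a heuristic (the rearrangement step is normally proved by precisely such a multiplier);
\item a clean equality case from strict positivity of the slack on $|z|>\eta_*$, which correctly gives $\Theta$ only up to null sets, a caveat the lemma's pointwise phrasing omits;
\item no case analysis of $h$ against $\eta$;
\item a quantitative deficit of the kind the paper later needs for stability.
\end{itemize}
The rearrangement picture, in turn, also covers the other regime $h>\lambda/\alpha$. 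There the maximizer is unique and the correct multiplier is $-\lambda/h$ rather than $-\alpha$.

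\textbf{Two wrong side remarks.} Two side remarks in your Step 2 are wrong, although nothing in the proof depends on them.

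First, odd $\theta$ on $(-\eta_*,\eta_*)$ are not excluded. The profile $\mathrm{sign}(z)\,\mathrm{sign}(|z|-h)$ that the paper substitutes into \eqref{noreq} is odd and lies in $\Theta$. Indeed $h<\eta_*$, and its inner moment $2\big(2\gamma_{1}(h)-\gamma_{1}(0)-\gamma_{1}(\eta_*)\big)$ vanishes because $4\gamma_{1}(h)-2\gamma_{1}(0)=\alpha_*=2\gamma_{1}(\eta_*)$. Only odd $\theta$ with nonzero inner moment are excluded.

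Second, $\int_{\R} z\,\mathrm{sign}(z)\gamma_{1}(z)\,dz=\sqrt{2/\pi}>\alpha_*$ only shows $h>0$. The condition $h<\eta_*$ says instead that $\theta=\mathrm{sign}(z)$ off $[-\eta_*,\eta_*]$ is feasible. It is indeed automatic: $\alpha_*=2\gamma_{1}(\eta_*)$ reads $2e^{-h^2/2}-1=e^{-\eta_*^2/2}<1$. So $x\colonequals e^{-h^2/2}<1$, and $e^{-\eta_*^2/2}=2x-1<x$, i.e.\ $h<\eta_*$.
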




Plugging in $\theta= - 1_{[0,h]}+1_{(h,\infty]}+1_{[-h,0)}-1_{(-\infty,-h)}$ into \eqref{noreq} and using \eqref{alphadef}, we get
$$
\int_{\R^{n}}|R_{\lambda}g(x)|\gamma_{n}(x)dx
=4\lambda\gamma_{1}([0,\lambda/\alpha])
-\alpha^{2}
+4\alpha e^{-\frac{\lambda^{2}}{2\alpha^{2}}}/\sqrt{2\pi}
-\lambda.
$$

The bound \eqref{lbeq} comes from optimizing this quantity over all $\alpha>0$, then plugging the result into \eqref{klb} and optimizing that over all $0<\lambda<1$.  Related to that calculation, we have
%
%



%

\begin{lemma}\label{taylorlem}
For any $0<\alpha<1$, let $F(\alpha)\colonequals 4\lambda\gamma_{1}([0,\lambda/\alpha]) - \alpha^{2}+4\alpha\gamma_{1}(\lambda/\alpha)-\lambda$.  Then
$$F'(\alpha) = 4\gamma_{1}(\lambda/\alpha) - 2\alpha.$$
$$F''(\alpha) = 4\lambda^{2}\alpha^{-3}\gamma_{1}(\lambda/\alpha)-2.$$
So, if $\alpha_{*}\approx .77$ satisfies $F'(\alpha_{*})=0$, and if $\lambda_*\approx.197479091$, then for all $\alpha$ with $|\alpha-\alpha_{*}|<.1$, 
$$F(\alpha)\leq F(\alpha_{*}) - \frac{9}{10}(\alpha-\alpha_{*})^{2}.$$
(And if $|\alpha-\alpha_*|>.1$, then $F(\alpha)\leq F(\alpha_*) - (9/10)(1/10)^2$.)
\end{lemma}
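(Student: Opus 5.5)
The plan is to verify the two derivative formulas by direct differentiation, obtain the local quadratic bound from a uniform upper bound on $F''$ near $\alpha_*$ together with Taylor's theorem, and obtain the far-range bound from the sign pattern of $F'$. Throughout, write $t\colonequals\lambda/\alpha$ and use $\gamma_{1}'(t)=-t\gamma_{1}(t)$ and $\frac{d}{d\alpha}t=-\lambda/\alpha^{2}$.

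\emph{Derivatives.} Differentiate $F$ term by term.
\begin{itemize}
\item The term $4\lambda\gamma_{1}([0,\lambda/\alpha])$ contributes $-4\lambda^{2}\alpha^{-2}\gamma_{1}(\lambda/\alpha)$.
\item The term $4\alpha\gamma_{1}(\lambda/\alpha)$ contributes $4\gamma_{1}(\lambda/\alpha)+4\alpha\cdot\frac{\lambda}{\alpha}\gamma_{1}(\lambda/\alpha)\cdot\frac{\lambda}{\alpha^{2}}=4\gamma_{1}(\lambda/\alpha)+4\lambda^{2}\alpha^{-2}\gamma_{1}(\lambda/\alpha)$.
\item The term $-\alpha^{2}$ contributes $-2\alpha$.
\end{itemize}
The two $\lambda^{2}\alpha^{-2}$ terms cancel, so $F'(\alpha)=4\gamma_{1}(\lambda/\alpha)-2\alpha$. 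Differentiating once more gives $F''(\alpha)=4\cdot\frac{\lambda}{\alpha}\gamma_{1}(\lambda/\alpha)\cdot\frac{\lambda}{\alpha^{2}}-2=4\lambda^{2}\alpha^{-3}\gamma_{1}(\lambda/\alpha)-2$.

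\emph{Local bound.} For $|\alpha-\alpha_{*}|<.1$ we have $\alpha\in(.672,.873)$. Using $\gamma_{1}\leq(2\pi)^{-1/2}$ and $\lambda_{*}^{2}<.04$,
$$4\lambda^{2}\alpha^{-3}\gamma_{1}(\lambda/\alpha)\leq \frac{4(.04)}{(.672)^{3}\sqrt{2\pi}}<.22,$$
so $F''\leq-9/5$ on this interval. Since $F'(\alpha_{*})=0$, Taylor's theorem with Lagrange remainder gives $F(\alpha)=F(\alpha_{*})+\tfrac12F''(\xi)(\alpha-\alpha_{*})^{2}\leq F(\alpha_{*})-\tfrac{9}{10}(\alpha-\alpha_{*})^{2}$.

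\emph{Range $|\alpha-\alpha_{*}|\geq .1$.} This is the main obstacle, because $F$ is not globally concave: in terms of $t$, $4\lambda^{2}\alpha^{-3}\gamma_{1}=\frac{4}{\lambda}t^{3}\gamma_{1}(t)$, which exceeds $2$ near $t=\sqrt3$. So I will instead use the sign of $F'$.
\begin{itemize}
\item The condition $F'(\alpha)=0$ is equivalent to $\sqrt{2/\pi}\,te^{-t^{2}/2}=\lambda$. The function $te^{-t^{2}/2}$ increases on $(0,1)$ and decreases on $(1,\infty)$, so there are exactly two roots.
\item One root is $t=\eta_{*}$, i.e.\ $\alpha=\alpha_{*}$. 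The other root has $t>1$, so it corresponds to some $\alpha_{0}<\lambda$.
\item Checking signs: $F'<0$ near $0$ (where $F'\approx-2\alpha$), $F'>0$ on $(\alpha_{0},\alpha_{*})$, and $F'<0$ on $(\alpha_{*},1)$.
\end{itemize}
Consequently, for $\alpha\geq\alpha_{*}+.1$ monotonicity gives $F(\alpha)\leq F(\alpha_{*}+.1)\leq F(\alpha_{*})-.9(.01)$ by the local bound. For $\alpha\leq\alpha_{*}-.1$,
$$F(\alpha)\leq\max\big(F(0^{+}),F(\alpha_{*}-.1)\big).$$
Here $F(0^{+})=2\lambda-\lambda=\lambda_{*}\approx.197$, while $F(\alpha_{*})\approx(1-\lambda_{*})/c\approx .479$. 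Hence $F(0^{+})$ is far below $F(\alpha_{*})-.009$, and the local bound handles $F(\alpha_{*}-.1)$.

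The only remaining numerical inputs are crude evaluations of $\alpha_{*}$, $\eta_{*}$ and $F(\alpha_{*})$, which can be done with generous margins.
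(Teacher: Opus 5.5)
The paper states this lemma without proof (Section 3 follows immediately), so there is no written argument to compare against. Your outline is the natural one: direct differentiation, then Taylor's theorem with a uniform bound on $F''$ near $\alpha_*$, then the sign pattern of $F'$ for the far range. Several parts are correct as written: the derivative formulas, the reduction of $F'(\alpha)=0$ to $\sqrt{2/\pi}\,te^{-t^{2}/2}=\lambda$ with exactly two roots, the resulting sign pattern of $F'$, the value $F(0^{+})=\lambda$, and the comparison with $F(\alpha_*)\approx .4786$. These give the parenthetical far-range claim cleanly once the local bound is in hand.

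There is, however, a genuine numerical gap in the local step. To get the constant $9/10$ you need $F''\le -9/5$, i.e.\ $4\lambda^{2}\alpha^{-3}\gamma_{1}(\lambda/\alpha)\le 1/5$ on $|\alpha-\alpha_*|<.1$. Your bound "$<.22$" only gives $F''<-1.78$, which does not imply $F''\le -1.8$. More careful arithmetic with your inputs does not rescue it: with $\gamma_{1}\le(2\pi)^{-1/2}$, $\lambda^{2}<.04$ and $\alpha>.672$, the quantity $4(.04)/((.672)^{3}\sqrt{2\pi})$ is about $.2103$, which exceeds $.2$. Replacing $\gamma_1(\lambda/\alpha)$ by $\gamma_1(0)$ is simply too lossy here.

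The fix is to keep the Gaussian factor, using an identity you already noted. Write $4\lambda^{2}\alpha^{-3}\gamma_{1}(\lambda/\alpha)=\frac{4}{\lambda}t^{3}\gamma_{1}(t)$ with $t=\lambda/\alpha$. The function $t^{3}\gamma_{1}(t)$ is increasing on $(0,\sqrt{3})$, so on the interval the maximum occurs at $\alpha=\alpha_*-.1$, where $t\approx .2938$. There the quantity is about $.196<.2$, giving $F''\le -1.803$. The margin is thin but positive. With this correction the local bound holds (and extends to $|\alpha-\alpha_*|=.1$ by continuity), and your far-range argument then goes through unchanged.
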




  



\section{Third Moment Bounds}

We begin proving our third moment bound for elements of $\M$ by treating the one-dimensional $n=1$ case.

Let $H_{3}(z)\colonequals z^{3}-3z$ for all $z\in\R$, so that $\int_{\R}(H_{3}(z))^{2}\gamma_{1}(z)\,dz = 6$.

Fix $\lambda>0$ and $\alpha>0$ and set
$
\eta\colonequals\lambda/\alpha.
$

When setting $F'(\alpha)=0$ in Lemma \ref{taylorlem}, we get, if $0<\eta_*<1$, then
\begin{equation}\label{eq:alpha=2phi}
\alpha_{*}=2\gamma_{1}(\eta_{*})
\quad\Longleftrightarrow\quad
\sqrt{\frac{2}{\pi}}\,\eta_{*} e^{-\eta_{*}^{2}/2}=\lambda_{*},
\end{equation}
and we will often assume \eqref{eq:alpha=2phi} below.  Let $\theta\colon\mathbb{R}\to[-1,1]$ satisfy
\begin{equation}\label{eq:theta-outside}
\theta(z)=\mathrm{sign}(z)\qquad\forall\,|z|\ge \eta_*.
\end{equation}
Under \eqref{eq:alpha=2phi}, the moment constraint $\int_{\R}z\theta(z)\gamma_{1}(z)dz=\alpha_*$
reduces to the single condition
\begin{equation}\label{eq:moment-zero}
\int_{-\eta_*}^{\eta_*} z\,\theta(z)\,\gamma_{1}(z)\,dz = 0.
\end{equation}


Lemma \ref{maxchar} shows that \eqref{eq:theta-outside} and \eqref{eq:moment-zero} characterize $\Theta$.

In this section we prove a uniform third Gaussian moment bound for any $\theta\colon\R\to\{-1,1\}$ satisfying \eqref{eq:theta-outside} and \eqref{eq:moment-zero}.

\begin{lemma}
\label{lem:tail-H3}
For every $\eta\ge 0$,
\begin{equation}\label{eq:B}
B\colonequals2\int_{\eta}^{\infty} H_{3}(z)\,\gamma_{1}(z)\,dz
=-2(1-\eta^{2})\,\gamma_{1}(\eta).
\end{equation}
\end{lemma}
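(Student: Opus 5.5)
The plan is to find an explicit antiderivative of $H_{3}(z)\gamma_{1}(z)$. Since $\gamma_{1}'(z)=-z\gamma_{1}(z)$, the Rodrigues-type identity for Hermite polynomials gives $H_{k}(z)\gamma_{1}(z)=(-1)^{k}\gamma_{1}^{(k)}(z)$. For $k=3$ this becomes
$$H_{3}(z)\gamma_{1}(z)=-\frac{d}{dz}\Big(H_{2}(z)\gamma_{1}(z)\Big),\qquad H_{2}(z)=z^{2}-1.$$
I will not rely on the general identity; instead I will verify this one directly by differentiating:
$$\frac{d}{dz}\big((z^{2}-1)\gamma_{1}(z)\big)=2z\gamma_{1}(z)-z(z^{2}-1)\gamma_{1}(z)=-(z^{3}-3z)\gamma_{1}(z).$$
This is a one-line product-rule computation.

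The next step is to integrate from $\eta$ to $\infty$ using the fundamental theorem of calculus. The boundary term at infinity vanishes because $(z^{2}-1)\gamma_{1}(z)\to0$ as $z\to\infty$, by Gaussian decay. This gives
$$\int_{\eta}^{\infty}H_{3}(z)\gamma_{1}(z)\,dz=\Big[-(z^{2}-1)\gamma_{1}(z)\Big]_{\eta}^{\infty}=(\eta^{2}-1)\gamma_{1}(\eta)=-(1-\eta^{2})\gamma_{1}(\eta).$$
Multiplying by $2$ yields \eqref{eq:B}.

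There is no real obstacle here. The only point needing justification is that the improper integral converges, which holds because $|H_{3}|\gamma_{1}$ is integrable on $[\eta,\infty)$. As a sanity check, at $\eta=0$ the formula gives $B=-2\gamma_{1}(0)=-2/\sqrt{2\pi}$. This agrees with a direct computation: $\int_{0}^{\infty}z^{3}\gamma_{1}(z)\,dz=2/\sqrt{2\pi}$ and $3\int_{0}^{\infty}z\gamma_{1}(z)\,dz=3/\sqrt{2\pi}$, so the half-line integral is $-1/\sqrt{2\pi}$, and twice that is $-2/\sqrt{2\pi}$.
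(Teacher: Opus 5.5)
Your proof is correct: the product-rule check $\frac{d}{dz}\big((z^{2}-1)\gamma_{1}(z)\big)=-H_{3}(z)\gamma_{1}(z)$ together with Gaussian decay at infinity gives exactly $-2(1-\eta^{2})\gamma_{1}(\eta)$. The paper states this lemma without writing out a proof, and your antiderivative computation is the standard verification it implicitly relies on; it also agrees with the paper's later numerical value $B\approx-0.7217$ at $\eta_{*}\approx0.2557$.
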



\begin{lemma}
\label{lem:A-bound}
Assume $\theta$ satisfies $\int_{-\eta}^{\eta}\theta(z)z\gamma_{1}(z)dz=0$.
Define
\[
A(\theta)\colonequals\int_{-\eta}^{\eta} H_{3}(z)\,\theta(z)\,\gamma_{1}(z)\,dz.
\]
Then
\begin{equation}\label{eq:Amax}
|A(\theta)|\ \le\ \eta^{2}\big(\gamma_{1}(0)-\gamma_{1}(\eta)\big).
\end{equation}
\end{lemma}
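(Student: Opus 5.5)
Since $H_{3}(z)=z^{3}-3z$, the linear part of $H_{3}$ is annihilated by the moment constraint: $\int_{-\eta}^{\eta}3z\,\theta(z)\gamma_{1}(z)\,dz=0$. So $A(\theta)=\int_{-\eta}^{\eta}z^{3}\theta(z)\gamma_{1}(z)\,dz$. More generally, for any constant $c\in\R$ the same constraint gives
$$A(\theta)=\int_{-\eta}^{\eta}z\,(z^{2}-c)\,\theta(z)\,\gamma_{1}(z)\,dz .$$
The plan is to choose $c$ so that the multiplier $z(z^{2}-c)$ is uniformly small on $[-\eta,\eta]$. Then we bound the integral crudely using $|\theta|\leq1$.

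The natural choice is $c=\eta^{2}/2$, the midpoint of the range of $z^{2}$ on $[-\eta,\eta]$. It gives $|z^{2}-\eta^{2}/2|\leq\eta^{2}/2$ for all $|z|\leq\eta$. (Choosing $c=\eta^{2}$ instead gives a bound with the same leading order $\gamma_{1}(0)\eta^{4}/2$ as $\eta\to0$, but it is slightly weaker at order $\eta^{6}$, so the midpoint is the right choice.) Hence
$$|A(\theta)|\leq\int_{-\eta}^{\eta}|z|\,\big|z^{2}-\tfrac{\eta^{2}}{2}\big|\,\gamma_{1}(z)\,dz\leq\frac{\eta^{2}}{2}\int_{-\eta}^{\eta}|z|\gamma_{1}(z)\,dz=\eta^{2}\int_{0}^{\eta}z\gamma_{1}(z)\,dz .$$

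Finally, $\gamma_{1}'(z)=-z\gamma_{1}(z)$ gives $\int_{0}^{\eta}z\gamma_{1}(z)\,dz=\gamma_{1}(0)-\gamma_{1}(\eta)$, which yields \eqref{eq:Amax}.

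There is no real obstacle. The only non-routine step is recognizing that the constraint lets one subtract an arbitrary multiple of $z$ from $H_{3}$, and picking the subtracted multiple $\eta^{2}/2$ that balances the deviation of $z^{2}$ across the interval. The argument never uses that $\theta$ takes values in $\{-1,1\}$, only that $|\theta|\leq1$ and that the first moment on $[-\eta,\eta]$ vanishes. So the bound applies to all of $\Theta$ and, in fact, to every $\theta$ satisfying the hypothesis.
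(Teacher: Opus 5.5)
Your proof is correct and is essentially the paper's argument in a more streamlined form. The paper reduces to odd $\theta$ and splits $\theta=p-q$, with $p=(1+\theta)/2$ and $q=(1-\theta)/2$ carrying equal $z\gamma_{1}(z)\,dz$-mass on $[0,\eta]$, so that only half the oscillation $\eta^{2}$ of $r(z)=z^{2}-3$ survives; your use of the moment constraint to center $r$ at its midrange value $\eta^{2}/2-3$ achieves exactly this, without the odd-part reduction or the $p,q$ decomposition, and gives the identical bound (your side remark comparing with $c=\eta^{2}$ is also accurate).
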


\begin{proof}
By replacing $\theta$ with its odd part $(\theta -\theta(-\cdot))/2$, it suffices to prove the lemma for odd $\theta$.  Denote $\mu(z)\colonequals z\gamma_{1}(z)$ for all $z>0$.  For any $z\in(0,\eta)$, we have
\[
H_{3}(z)\gamma_{1}(z)=(z^{3}-3z)\gamma_{1}(z)=(z^{2}-3)\,z\gamma_{1}(z)=(z^{2}-3)\mu(z).
\]
Let $r(z)\colonequals z^{2}-3$. Then since $\theta$ and $H_{3}$ are odd, $H_{3}\theta$ is even, so
\[
\frac{A(\theta)}{2}=\int_{0}^{\eta} r(z)\,\theta(z)\mu(z)dz.
\]
Note that $r$ is increasing on $[0,\eta]$ and
\[
\max_{z\in[0,\eta]}r(z)-\min_{z\in[0,\eta]}r(z)
=\big(-3+\eta^{2}\big)-(-3)=\eta^{2}.
\]
Also $\mu(0,\eta)=\int_{0}^{\eta}z\gamma_{1}(z)\,dz=\gamma_{1}(0)-\gamma_{1}(\eta)$.  Since $|\theta(z)|\le 1$ $\forall$ $z\in\R$, define
$p(z)\colonequals(1+\theta)/2$ and $q(z)\colonequals(1-\theta)/2$, so $p(z),q(z)\in[0,1]$ and $p(z)-q(z)=\theta(z)$ for all $z\in\R$.  Therefore
\begin{equation}\label{difeq}
\int_{0}^{\eta} r(z)\theta(z)\mu(z)dz
=\int_{0}^{\eta} r(z)p(z)\mu(z)dz
-\int_{0}^{\eta} r(z)q(z)\mu(z)dz.
\end{equation}
The constraint $\int_{0}^{\eta}\theta(z)\mu(z)dz=0$ implies $\int_{0}^{\eta}p(z)\mu(z)dz=\int_{0}^{\eta}q(z)\mu(z)dz=\mu(0,\eta)/2$.
Then each term on the right side of \eqref{difeq} lies between $\min_{z\in[0,\eta]} r(z)\cdot\mu(0,\eta)/2$ and $\max_{z\in[0,\eta]} r(z)\cdot\mu(0,\eta)/2$,
so their difference is bounded by $\frac{1}{2}[\max_{z\in[0,\eta]} r(z)-\min_{z\in[0,\eta]} r(z)]\mu(0,\eta)=\frac{\eta^{2}}{2}(\gamma_{1}(0)-\gamma_{1}(\eta))$.
Multiplying by $2$ yields \eqref{eq:Amax}.
\end{proof}

\begin{lemma}\label{mombd}
Assume $\theta$ satisfies $\theta(z)=\mathrm{sign}(z)$ for all $|z|>\eta$ for some $\eta<1/2$.  Then
$$\int_{-\infty}^{\infty}\theta(z)z\gamma_{1}(z)dz\neq0.$$
Moreover, if $\bar{\theta}\colon\R\to[-1,1]$ satisfies $\|\theta-\bar{\theta}\|_{2}<1/100$, then 
$$
\Big|\int_{-\infty}^{\infty}\overline{\theta}(z)z\gamma_{1}(z)dz\Big|>.6.
$$
\end{lemma}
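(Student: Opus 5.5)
Split the integral at $\pm\eta$. Since $\theta(z)=\sgn(z)$ for $|z|>\eta$, the outer part is computed exactly:
$$\int_{|z|>\eta}\theta(z)z\gamma_{1}(z)dz=2\int_{\eta}^{\infty}z\gamma_{1}(z)dz=2\gamma_{1}(\eta).$$
For the inner part, $|\theta|\le1$ gives
$$\Big|\int_{-\eta}^{\eta}\theta(z)z\gamma_{1}(z)dz\Big|\le 2\int_{0}^{\eta}z\gamma_{1}(z)dz=2(\gamma_{1}(0)-\gamma_{1}(\eta)).$$
Consequently
$$\int_{\R}\theta(z)z\gamma_{1}(z)dz\ \ge\ 2\gamma_{1}(\eta)-2(\gamma_{1}(0)-\gamma_{1}(\eta))=4\gamma_{1}(\eta)-2\gamma_{1}(0).$$
The right side is decreasing in $\eta$. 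At $\eta=1/2$ it equals
$$\sqrt{2/\pi}\,\bigl(2e^{-1/8}-1\bigr)\approx 0.7979\cdot(1.7650-1)\approx 0.6104.$$
So for $\eta<1/2$ the first moment is strictly larger than $0.61$, and in particular it is nonzero. This already proves the first claim, and it leaves a margin of about $0.0104$ above $0.6$. The only step needing care is this numerical evaluation. One has $e^{-1/8}>0.8824$, so $2e^{-1/8}-1>0.7648$, and $\sqrt{2/\pi}>0.7978$. Their product exceeds $0.6101$.

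For the perturbation claim, the plan is Cauchy--Schwarz in $L_{2}(\gamma_{1})$, using $\int_{\R}z^{2}\gamma_{1}(z)dz=1$:
$$\Big|\int_{\R}(\theta-\bar\theta)(z)\,z\gamma_{1}(z)dz\Big|\le\|\theta-\bar\theta\|_{2}\,\|z\|_{2}<\frac{1}{100}.$$
It follows that
$$\int_{\R}\bar\theta(z)z\gamma_{1}(z)dz> 0.6101-0.01=0.6001>0.6.$$
This gives the bound in absolute value.

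The main (minor) obstacle is the margin. We need $4\gamma_{1}(\eta)-2\gamma_{1}(0)>0.61$ uniformly on $\eta<1/2$, which follows from monotonicity of $\gamma_{1}$ on $[0,\infty)$. We also need a rigorous lower bound for $e^{-1/8}$. For that I would use the alternating series $e^{-x}\ge 1-x+x^{2}/2-x^{3}/6$ with $x=1/8$, which gives $e^{-1/8}\ge 0.88248$.
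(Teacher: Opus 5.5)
Your proposal is correct and is essentially the paper's proof. The paper also bounds the first moment below by $2\int_{\eta}^{\infty}z\gamma_{1}(z)\,dz-2\int_{0}^{\eta}z\gamma_{1}(z)\,dz=\sqrt{2/\pi}\,(2e^{-\eta^{2}/2}-1)\ge 0.6104$ for $\eta<1/2$, and it handles $\bar{\theta}$ by Cauchy--Schwarz with $\int_{\R}z^{2}\gamma_{1}(z)\,dz=1$ plus the reverse triangle inequality, exactly as you do (your Taylor-series certification of $e^{-1/8}\ge 0.88248$ just makes the paper's numerical value rigorous).
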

\begin{proof}
We have
$$
\int_{-\infty}^{\infty}\theta(z)z\gamma_{1}(z)dz
\geq2\int_{0}^{\eta}(-1)z\gamma_{1}(z)dz
+2\int_{\eta}^{\infty}z\gamma_{1}(z)dz
=2(2e^{-\eta^{2}/2}-1)/\sqrt{2\pi}\geq.6104\ldots>0,
$$
since $\eta<1/2$.  For the second conclusion, note that
$$\Big|\int_{\R}z(\theta(z)-\bar{\theta}(z))\gamma_{1}(z)dz\Big|
\leq\|\theta-\bar{\theta}\|_{2}\Big(\int_{\R}z^{2}\gamma_{1}(z)dz\Big)^{1/2}<1/100.$$
So $\int_{\R}z\bar{\theta}(z)\gamma_{1}(z)dz>.6$ by the reverse triangle inequality.
\end{proof}

\section{Maximizers have absolute value 1 a.e.}

We now build upon Lemma \ref{maxchar} and further characterize the maximizing set $\M$.  Let


$$
\M = \M_{\lambda,n}\colonequals\{g\colon\R^n\to[-1,1]:\ \|R_\lambda g\|_1=\|R_\lambda\|_{\infty\to1}\}.
$$

\begin{lemma}[Every maximizer of $\|R_\lambda\|_{\infty\to1}$ is $\{\pm1\}$-valued]\label{lem:maximizers-are-pm1}

For every $g\in\mathcal M$ we have $|g(x)|=1$ for $\gamma_n$-a.e.\ $x$ (hence $g\in\{\pm1\}$ a.e.).
\end{lemma}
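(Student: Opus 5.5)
The plan is to sandwich $\|R_\lambda g\|_1$ between itself and a quantity that is \emph{linear} in $g$ once $P_1g$ is fixed. Equality in the sandwich will then force $|g|=1$ a.e. Fix $g\in\M$ and set $v\colonequals\int_{\R^n}yg(y)\gamma_n(y)dy$, so that $R_\lambda g(x)=\langle x,v\rangle-\lambda g(x)$ and $\alpha=\|v\|$. First I would dispose of $\alpha=0$: then $\|R_\lambda g\|_1\le\lambda$. This is strictly smaller than $F(\alpha_*)$ from Lemma \ref{taylorlem}, which is a lower bound for $\|R_\lambda\|_{\infty\to1}$, so $\alpha>0$.

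The key pointwise inequality comes from convexity of $t\mapsto|a-\lambda t|$ on $[-1,1]$. Writing $t=\frac{1+t}{2}\cdot1+\frac{1-t}{2}\cdot(-1)$ gives
$$|a-\lambda t|\le \tfrac{1+t}{2}|a-\lambda|+\tfrac{1-t}{2}|a+\lambda| =\tfrac12\big(|a-\lambda|+|a+\lambda|\big)+t\cdot\tfrac12\big(|a-\lambda|-|a+\lambda|\big),$$
and I would record when it is strict. If $|a|\ge\lambda$, the map $t\mapsto a-\lambda t$ does not change sign on $[-1,1]$, so equality holds. If $|a|<\lambda$, the kink $t=a/\lambda$ lies in $(-1,1)$, and the inequality is strict for every $t\in(-1,1)$.

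Apply this with $a=\langle x,v\rangle$ and $t=g(x)$, integrate, and condition on $P_1g=\alpha z$ exactly as in \eqref{noreq}; the last step of \eqref{noreq} is linear in $\theta_g$ and uses nothing about $g$ being $\pm1$. This gives
$$\|R_\lambda g\|_1\le 2\lambda\gamma_1([0,\lambda/\alpha])+2\alpha\gamma_1(\lambda/\alpha)+\int_\R\gamma_1(z)\psi(z)\theta_g(z)dz .$$
Here $\theta_g=\E[g\,|\,P_1g=\alpha z]$ takes values in $[-1,1]$ and satisfies \eqref{coneq}. The right side is at most the maximum of \eqref{maxeq} subject to \eqref{coneq}, which by the Reeds computation equals $F(\alpha)$. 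By Lemma \ref{taylorlem}, $F(\alpha)\le F(\alpha_*)$. Finally $F(\alpha_*)\le\|R_\lambda\|_{\infty\to1}$: take $n\ge1$ and $g_0(x)=\theta(x_1)$ with $\theta$ the explicit odd profile realizing $F(\alpha_*)$. Since $g\in\M$, every inequality in this chain is an equality.

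Three consequences follow. First, $\alpha=\alpha_*$: Lemma \ref{taylorlem} gives strict inequality otherwise. Second, $\theta_g\in\Theta$, so by Lemma \ref{maxchar}, $\theta_g(z)=\sgn(z)$ for $|z|>\eta_*$. On that set $\E[g\,|\,P_1g]=\pm1$ while $|g|\le1$, which forces $g=\sgn\langle x,v\rangle$ a.e. on $\{|\langle x,v\rangle|>\lambda\}$. Third, equality in the integrated convexity inequality, together with the strictness case above, forces $|g(x)|=1$ for a.e.\ $x$ with $|\langle x,v\rangle|<\lambda$. The boundary set $\{|\langle x,v\rangle|=\lambda\}$ is a pair of hyperplanes, which is $\gamma_n$-null since $v\neq0$. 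Combining the three gives $|g|=1$ a.e.

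I expect the main obstacle to be the middle step: checking that Lemma \ref{maxchar} and the equality $\max\eqref{maxeq}=F(\alpha)$ apply to $[-1,1]$-valued $g$, not just to $\pm1$-valued ones. This needs two things. One is to verify that $\theta_g$ genuinely satisfies \eqref{coneq}, including the sign convention $\int z\theta_g\gamma_1=\alpha>0$, which may require replacing $z$ by $-z$. The other is to confirm that the hypothesis $h<\lambda_*/\alpha_*$ of Lemma \ref{maxchar} holds at $\alpha_*$, by computing $h$ from \eqref{alphadef}. If the direct appeal to Lemma \ref{maxchar} turned out to be awkward, I would instead re-derive the bathtub argument for \eqref{maxeq} directly. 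Since $\psi(z)/z$ is strictly increasing in $|z|$ beyond $\eta_*$, any maximizer must equal $\sgn(z)$ for a.e.\ $|z|>\eta_*$, and that is all the second consequence uses.
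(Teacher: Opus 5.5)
Your proof is correct, but it takes a genuinely different route from the paper's.

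The paper never passes through the one-dimensional Reeds problem. It takes a $\pm1$-valued $h\in\sgn(R_\lambda g)$ and uses self-adjointness of $R_\lambda$ to get $\|R_\lambda g\|_1=\langle h,R_\lambda g\rangle=\langle R_\lambda h,g\rangle\le\|R_\lambda h\|_1\le\|R_\lambda\|_{\infty\to1}$. Equality then forces $g=\sgn(R_\lambda h)$ wherever $R_\lambda h\neq0$, and $R_\lambda h(x)=\langle m,x\rangle-\lambda h(x)$ can vanish only on the hyperplanes $\{\langle m,x\rangle=\pm\lambda\}$. That argument is a few lines long and holds for every $\lambda>0$ and every $n$. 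It needs neither the value of $\|R_\lambda\|_{\infty\to1}$ nor Lemmas \ref{taylorlem} and \ref{maxchar}.

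Your chord inequality for $t\mapsto|a-\lambda t|$ instead linearizes $\|R_\lambda g\|_1$ in $\theta_g$ and reduces everything to Reeds' problem. So it is tied to $\lambda=\lambda_*$ and to the numerical content of those two lemmas. In return it proves several things at once:
\begin{itemize}
\item every $g\in\M$ has $\|P_1g\|_2=\alpha_*$;
\item every $g\in\M$ has $\theta_g\in\Theta$, so $g=\sgn(Z)$ a.e.\ on $\{|Z|>\eta_*\}$;
\item $\|R_\lambda\|_{\infty\to1}=F(\alpha_*)$ in every dimension $n$.
\end{itemize}
The paper later uses these facts, e.g.\ in the setup and Step 2 of Lemma \ref{lem:P3pairing-polished}, without writing out this reduction.

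Two small corrections. First, the constrained maximum of \eqref{maxeq} plus the constant term equals $F(\alpha)$ only when $h\le\lambda/\alpha$. At $\lambda_*$ this fails, for example, at $\alpha=1/2$, where $h\approx0.64>\eta\approx0.39$. What you actually use, and what holds for every $\alpha$, is the upper bound $\le D_\alpha(-\alpha)=F(\alpha)$. This is weak duality with $\mu=-\alpha$, as in Step 1 of Lemma \ref{lem:dual-gap}, which uses no restriction on $\alpha$.

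Second, your test function $g_0$ must use a $\pm1$-valued profile in $\Theta$, such as the $h$-profile. A profile taking values in $(-1,1)$ on the strip would lose strictly in your own convexity step, so $\|R_\lambda g_0\|_1<F(\alpha_*)$.

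The obstacles you anticipate do not arise:
\begin{itemize}
\item Lemma \ref{maxchar} is already stated for $[-1,1]$-valued $\theta$.
\item $\int z\theta_g\gamma_1=\E[gZ]=\alpha>0$ holds automatically, so no sign flip is needed.
\item $h<\eta_*$ at $\alpha_*$, because \eqref{alphadef} gives $2\gamma_1(h)=\gamma_1(0)+\gamma_1(\eta_*)$.
\end{itemize}
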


\begin{proof}
Fix $g\in\mathcal M$. Let $a\colonequals R_\lambda g$.
Choose $h:\R^n\to\{\pm1\}$ with $h(x)\in\mathrm{sign}(a(x))$ for all $x$. 
Then $|h|\le 1$ and
\begin{equation}\label{eq:choose-h}
\|R_\lambda g\|_1=\int_{\R^{n}} |a(x)|\,\gamma_{n}(x)dx=\int_{\R^{n}} h(x)a(x)\gamma_{n}(x)dx=\langle h, R_\lambda g\rangle.
\end{equation}

Since $P_1$ is an orthogonal projection, it is self-adjoint on $L_2(\gamma_n)$; hence $R_\lambda$ is self-adjoint as well.
Therefore \eqref{eq:choose-h} becomes
\[
\|R_\lambda g\|_1=\langle R_\lambda h, g\rangle.
\]
Because $|g|\le 1$, we have the pointwise bound $(R_\lambda h)g\le |R_\lambda h|$, hence
\begin{equation}\label{eq:upper-by-Th}
\|R_\lambda g\|_1=\langle R_\lambda h, g\rangle \le \int_{\R^{n}} |R_\lambda h(x)|\,\gamma_{n}(x)dx=\|R_\lambda h\|_1.
\end{equation}

On the other hand, since $g$ is a maximizer,
\[
\|R_\lambda h\|_1 \le \|R_\lambda\|_{\infty\to1}=\|R_\lambda g\|_1.
\]
Combining with \eqref{eq:upper-by-Th} and \eqref{eq:choose-h} forces equality:
\[
\langle R_\lambda h, g\rangle=\int_{\R^{n}} |R_\lambda h(x)|\,\gamma_{n}(x)dx.
\]
But equality in $(R_\lambda h)g\le |R_\lambda h|$ implies that for $\gamma_n$-a.e.\ $x$ with $(R_\lambda h)(x)\neq 0$,
we must have $g(x)=\mathrm{sign}((R_\lambda h)(x))$, and in particular $|g(x)|=1$ there.

It therefore remains to show that $R_\lambda h\neq 0$ almost everywhere.
Write $P_1 h(x)=\langle m, x\rangle$ $\forall$ $x\in\R^{n}$ where $m\colonequals\E[h(X)X]\in\R^n$. Then
\[
R_\lambda h(x)\stackrel{\eqref{rdef}}{=}\langle m,x\rangle-\lambda h(x).
\]
On the set $\{h=+1\}$ we have $R_\lambda h=\langle m,x\rangle-\lambda$, whose zero set is contained in the hyperplane
$\{\langle m,x\rangle=\lambda\}$; similarly on $\{h=-1\}$ the zero set is contained in $\{\langle m,x\rangle=-\lambda\}$.
Each hyperplane has $\gamma_n$-measure $0$, hence $\gamma_n(\{R_\lambda h=0\})=0$.
Therefore $|g|=1$ almost everywhere.  So, $R_{\lambda}h\neq0$ almost everywhere, completing the proof.
\end{proof}

\begin{lemma}\label{convlem}
Let $f\colon\R^{n}\to\{-1,1\}$ be measurable, and let
\begin{flalign*}
\mathcal{N}
&\colonequals\Big\{h\colon\R^{n}\to[-1,1]\colon\int_{\R^{n}}x_{i}h(x)\gamma_{n}(x)dx=0,\,\forall\,2\leq i\leq n,\,\wedge\,\int_{|x_{1}|<\eta_{*}}xh(x)\gamma_{n}(x)dx=0\\
&\hspace{4cm}\wedge h(x)=\mathrm{sign}(x_{1}),\text{ for a.e. }x\in\R^{n}\text{ with }\,|x_{1}|>\eta_*\Big\}.
\end{flalign*}
Let $h\in\N$.  Then there exists $g\in\M$ such that
$$\|f-g\|_{1}\leq\|f-h\|_{1}$$
\end{lemma}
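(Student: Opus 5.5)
Since $f$ takes values in $\{-1,1\}$ and any $h$ takes values in $[-1,1]$, we have $|f(x)-h(x)|=1-f(x)h(x)$ pointwise. Hence
$$\|f-h\|_{1}=1-\langle f,h\rangle$$
is an affine function of $h$. The plan is to minimize this affine functional over the convex set $\N$, show that some minimizer takes values in $\{-1,1\}$, and then show that every $\{-1,1\}$-valued element of $\N$ lies in $\M$.

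\textbf{Step 1 (existence of a $\pm1$-valued minimizer).} The set $\N$ is convex. It is also weak-$*$ compact in $L_{\infty}(\gamma_{n})$: it is cut out of the unit ball by finitely many constraints, namely the $n-1$ moments $\int x_{i}h\gamma_{n}$, the $n$ moments $\int_{|x_{1}|<\eta_*}xh\gamma_{n}$, and the constraint $h=\mathrm{sign}(x_1)$ on $\{|x_{1}|>\eta_*\}$. Each of these is weak-$*$ closed, because $x_i$ and $x1_{\{|x_1|<\eta_*\}}$ lie in $L_{1}(\gamma_{n})$. The map $h\mapsto\langle f,h\rangle$ is weak-$*$ continuous, so it attains its maximum over $\N$. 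By Bauer's maximum principle (or Krein--Milman) it attains this maximum at an extreme point $h_0$ of $\N$.

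We claim $|h_0|=1$ a.e. Suppose not. Then for some $\delta>0$ the set $E=\{|h_0|<1-\delta\}$ has positive measure, and $E\subset\{|x_1|\le\eta_*\}$ up to a null set. Because $\gamma_n$ is nonatomic, $L_{\infty}(E)$ is infinite dimensional, while only $2n-1$ linear moment constraints are imposed. Hence there is a nonzero $u\in L_\infty(E)$ with $|u|\le\delta$ that annihilates all of these constraint functionals. Then $h_0\pm u\in\N$, which contradicts extremality. So we take $g\colonequals h_0$, which is $\{-1,1\}$-valued and satisfies $\|f-g\|_1\le\|f-h\|_1$.

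\textbf{Step 2 ($\pm1$-valued elements of $\N$ lie in $\M$).} Let $g\in\N$ with $|g|=1$. Its moment vector is $m=\int xg\gamma_{n}$. The first constraint gives $m_i=0$ for $i\ge2$. Using the $x_1$-component of the second constraint,
$$m_{1}=\int_{|x_{1}|>\eta_*}|x_{1}|\gamma_{n}=2\gamma_{1}(\eta_*)=\alpha_*>0$$
by \eqref{eq:alpha=2phi}. So $P_{1}g=\alpha_*x_{1}$ and $\theta_g(z)=\E[g\mid x_1=z]$. This profile equals $\mathrm{sign}(z)$ for $|z|>\eta_*$ and satisfies $\int_{-\eta_*}^{\eta_*}z\theta_g\gamma_1=0$. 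By Lemma \ref{maxchar}, $\theta_g\in\Theta$, so $\theta_g$ maximizes \eqref{maxeq} for $\alpha=\alpha_*$.

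Plugging into \eqref{noreq} gives $\|R_\lambda g\|_1=F(\alpha_*)$, where $F$ is as in Lemma \ref{taylorlem}; this value can be computed using any profile in $\Theta$, all of which give the same value of \eqref{maxeq}. The Reeds analysis of Section \ref{sectwo} shows that $\|R_\lambda\|_{\infty\to1}=\sup_\alpha F(\alpha)=F(\alpha_*)$: one optimizes over $\theta$ for fixed $\alpha$, then over $\alpha$, and uses Lemma \ref{lem:maximizers-are-pm1} to restrict to $\pm1$-valued functions. Hence $g\in\M$.

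\textbf{Main obstacle.} I expect the main care to be needed in Step 1, specifically in producing the perturbation $u$. It must be supported where $|h_0|<1-\delta$, and it must kill all $2n-1$ constraints simultaneously, including the vector constraint restricted to $\{|x_1|<\eta_*\}$. This should follow from a dimension count: restrict to $2n$ disjoint positive-measure subsets of $E$ and solve a homogeneous linear system with $2n$ unknowns and $2n-1$ equations. A secondary point is to confirm in Step 2 that $F(\alpha_*)$ is indeed the operator norm, rather than just a local critical value.
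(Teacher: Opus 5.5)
Your proposal is correct and follows the paper's proof. Both arguments use the identity $\|f-h\|_1=1-\langle f,h\rangle$, weak compactness of $\N$ with Bauer's maximum principle, and a finite-dimensional perturbation supported in the strip where $|h_0|<1$ to show that extreme points are $\{-1,1\}$-valued, and then both use Lemma \ref{maxchar} to place these in $\M$. Your Step 2 only spells out what the paper compresses into a one-line appeal to Lemma \ref{maxchar}: you compute the moment vector $(\alpha_*,0,\ldots,0)$ and use the global part of Lemma \ref{taylorlem} to identify $F(\alpha_*)$ with $\|R_\lambda\|_{\infty\to1}$. Counting $2n-1$ constraints instead of the paper's $n$ is harmless.
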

\begin{proof}
Since $|f|=1$ and $|h|\leq1$, we have the pointwise equality $|f-h|=1-fh$, so that
\begin{equation}\label{fhdif}
\|f-h\|_{1}=1-\langle f,h\rangle.
\end{equation}
Consider then the linear functional $\L\colon\N\to\R$ defined by
$$\L(h_{0})\colonequals\langle f,h_{0}\rangle,\qquad\forall\,h_{0}\in\N.$$

Note that $\mathcal{N}$ is a convex and compact subset with respect to the weak topology on the Hilbert space $L_{2}(\gamma_{n})$, since it is norm bounded and weakly closed and the moment constraints such as $h_{0}\mapsto \int_{\R^{n}}x_{i}h_{0}(x)\gamma_{n}(x)dx$ are weakly continuous and linear.  Since $\L$ is then a continuous linear functional on the convex compact set $\N$, $\L$ achieves its maximum on $\N$ at an extreme point of $\N$ by the Bauer Maximum Principle.  We therefore characterize the extreme points of $\N$.

The extreme points of $\mathcal{N}$ are contained in the set of elements of $\N$ taking values in $\{-1,1\}$ almost surely.  To see this, note that if an extreme point $g\in\N$ satisfies $\gamma_{n}(|g|\neq1)>0$, then we can exhibit $g$ as an element of a line in $\mathcal{N}$ by adding a small multiple of a bounded function $\widetilde{g}$ supported in the set $A\colonequals\{|x_{1}|<\eta_*\}\cap\{|g|<1-\epsilon_{0}\}$ satisfying $\int_{|x_{1}|<\eta_*}x\widetilde{g}(x)\gamma_{n}(x)dx=0$ for some $0<\epsilon_{0}<1$ small enough such that $\gamma_{n}(|g|<1-\epsilon_{0})>0$.  (Such a $\widetilde{g}$ exists by e.g. letting $A_{1},\ldots,A_{n+1}\subset A$ be disjoint sets with positive measure, then choosing by linear algebra constants $a_{1},\ldots,a_{n+1}\in\R$ not all zero such that $\widetilde{g}=\sum_{i=1}^{n+1}a_{i}1_{A_{i}}$ and $\int_{\R^{n}}x\widetilde{g}(x)\gamma_{n}(x)dx=0$, which amounts to solving $n$ equations in $n+1$ unknowns $a_{1},\ldots,a_{n+1}$.)  Then $|g+t\widetilde{g}|\leq1$  and $g+t\widetilde{g}\in\N$ for all $|t|<\epsilon_0 / \|\widetilde{g}\|_{\infty}$.

That is, the extreme points of $\N$ are elements of $\M$, by Lemma \ref{maxchar}.  Therefore, $\L$ achieves its maximum on $\N$ on the subset $\M\cap\N$.  That is,
$$\sup_{h_{0}\in\N}\L(h_{0})=\max_{g\in\M\cap\N}\L(g).$$
From \eqref{fhdif}, we conclude that
$$\inf_{h_{0}\in\N}\|f-h_{0}\|_{1}=\min_{g\in\M\cap\N}\|f-g\|_{1}.$$
That is, there exists some $g\in\M$ such that
$$\|f-g\|_{1}\leq\|f-h\|_{1}.$$

\end{proof}

\section{Derivative Bound}\label{secfive}

The main result of this section is Lemma \ref{lemmaipbd}, which gives a lower bound on the $\langle P_{3}g,\mathrm{sign}R_{\lambda_*}g\rangle$  for all $g\in\M$.

\begin{lemma}[Lower bound for $\langle \sgn(R_\lambda g),\,P_3 g\rangle$ via $(p,s_1,t_2)$]\label{lem:P3pairing-polished}

Assume we are at the \emph{Reeds point}, i.e.\ there exist $\eta_*>0$ and $\alpha_*>0$ such that
\begin{equation}\label{eq:reeds-point}
\sqrt{\frac{2}{\pi}}\,\eta_* e^{-\eta_*^2/2}=\lambda_*<1,
\qquad
\alpha_*=\frac{\lambda_*}{\eta_*}=2\gamma_{1}(\eta_*).
\end{equation}
Let $g\colon\R^n\to\{\pm1\}$ satisfy $g\in\M$ (by Lemma \ref{lem:maximizers-are-pm1} there is no loss of generality.)

Let $X\sim\gamma_n$ ($X\in\R^{n}$ is a mean zero Gaussian with identity covariance matrix) and set
\[
m\colonequals\E[g(X)\,X]\in\R^n,\qquad u\colonequals\frac{m}{\|m\|_2}\in S^{n-1},\qquad Z\colonequals\langle u,X\rangle\sim N(0,1),
\]
($m\neq 0$ by Lemma \ref{mombd} since $\eta_*<1/2$) so that
\[
P_1 g(X)=\langle m,X\rangle=\alpha_* Z
\qquad\text{with}\qquad \alpha_*=\|P_1 g\|_2=\|m\|_2,
\]
Let $Y\colonequals X-Zu\in\R^{n-1}$ denote the orthogonal complement so that $(Z,Y)$ are independent Gaussians.  Let
\[
f\colonequals\sgn(R_{\lambda_*} g)=\sgn(\alpha_* Z-\lambda_* g)\in\{\pm1\}\quad\text{a.e.}
\]
Let $H_3(z)\colonequals z^3-3z$ $\forall$ $z\in\R$ (so $\E[H_3(Z)^2]=6$). Define
\begin{equation}\label{badefs}
B\colonequals2\int_{\eta_*}^{\infty} (z^3-3z)\gamma_{1}(z)\,dz=-2(1-\eta_*^2)\gamma_{1}(\eta_*),
\qquad
A_{\max}\colonequals\eta_*^2(\gamma_{1}(0)-\gamma_{1}(\eta_*)),
\end{equation}
\begin{equation}\label{kdef}
\kappa_Q\colonequals\frac{B^2-A_{\max}^2}{6}.
\end{equation}
Finally define the inner-region quantities (with $Z\sim N(0,1)$)
\begin{equation}\label{pstdefs}
p\colonequals\P(|Z|<\eta_*),\qquad s_1\colonequals\E\big[|Z|\,1_{\{|Z|<\eta_*\}}\big],\qquad
t_2\colonequals\E\big[Z^2\,1_{\{|Z|<\eta_*\}}\big].
\end{equation}

Then:
\begin{enumerate}
\item[(i)] The following explicit formulas hold:
\begin{equation}\label{eq:p-s1-t2}
p=2\Phi(\eta_*)-1,
\qquad
s_1=2(\gamma_{1}(0)-\gamma_{1}(\eta_*)),
\qquad
t_2=p-2\eta_*\gamma_{1}(\eta_*),
\end{equation}
where $\Phi(\eta_*)\colonequals\int_{-\infty}^{\eta_*}e^{-z^{2}/2}dz/\sqrt{2\pi}$.

\item[(ii)] One has the decomposition
\begin{equation}\label{eq:P3pairing-decomp}
\langle f,P_3 g\rangle
=
\underbrace{\frac{1}{6}\E[g(X)H_3(Z)]\,\E[f(X)H_3(Z)]}_{\ge\,\kappa_Q}
\ -\ \|P_\perp g\|_2^2,
\end{equation}
where $P_\perp$ denotes the orthogonal projection of the third chaos onto the direct sum of the
\emph{transverse} components
\[
\Big(\mathcal H_1(Z)\otimes \mathcal H_2(Y)\Big) 
\oplus
\Big(\mathcal H_2(Z)\otimes \mathcal H_1(Y)\Big) 
\oplus
\Big(\mathcal H_0(Z)\otimes \mathcal H_3(Y)\Big).
\]
(here $\mathcal{H}_{k}(Z)$ denotes the span of $H_{k}(Z)$) and accordingly write
\begin{equation}\label{pdefs}
P_3=P_{3,0}+P_{\perp}
=P_{3,0}+P_{1,2}+P_{2,1}+P_{0,3}
\end{equation}
for the orthogonal projections, where $P_{3,0}$ projects onto the span of $\mathcal{H}_{3}(Z)\otimes\mathcal{H}_{0}(Y)$.

\item[(iii)] The transverse mass satisfies the explicit bound
\begin{equation}\label{eq:Pperp-bound}
\|P_\perp g\|_2^2 \le p^2+s_1^2+\frac12\,t_2^2.
\end{equation}

\item[(iv)] Consequently,
\begin{equation}\label{eq:P3pairing-final}
\boxed{
\ \ \langle f,P_3 g\rangle \ \ge\ \kappa_Q-\Big(p^2+s_1^2+\tfrac12 t_2^2\Big).\ \ }
\end{equation}
\end{enumerate}

\end{lemma}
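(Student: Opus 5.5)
The plan is to first pin down the structure of $g$ and $f$ on the two regions $\{|Z|\ge\eta_*\}$ and $\{|Z|<\eta_*\}$, and then read off each of (i)--(iv) from that structure.

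\textbf{Structure of $g$ and $f$.}
\begin{enumerate}
\item Since $g\in\M$, Lemma \ref{maxchar} (together with the computation \eqref{noreq} and Lemma \ref{taylorlem}, which force $\alpha=\alpha_*$) gives $\theta_g\in\Theta$. Because $g$ is $\{\pm1\}$-valued, $\theta_g(z)=\sgn(z)$ for $|z|>\eta_*$ forces $g=\sgn(Z)$ a.s. on $\{|Z|>\eta_*\}$.
\item On $\{|Z|>\eta_*\}$ we have $\alpha_*|Z|>\lambda_*$, so $f=\sgn(Z)$ there.
\item On $\{|Z|<\eta_*\}$ we have $|\alpha_*Z|<\lambda_*$, so $f=-g$ there.
\end{enumerate}
Writing $k\colonequals g1_{\{|Z|<\eta_*\}}$ and $\varphi(Z)\colonequals\sgn(Z)1_{\{|Z|\ge\eta_*\}}$, this gives
\[
g=\varphi(Z)+k,\qquad f=\varphi(Z)-k=2\varphi(Z)-g.
\]

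\textbf{Part (i).} This is routine Gaussian calculus. Use $\int_0^{\eta}z\gamma_1(z)\,dz=\gamma_1(0)-\gamma_1(\eta)$ for $s_1$, and integrate by parts with $z^2\gamma_1=-z\gamma_1'$ for $t_2$.

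\textbf{Part (ii).}
\begin{enumerate}
\item Split $P_3=P_{3,0}+P_\perp$. Since $P_{3,0}g=\frac16\E[gH_3(Z)]H_3(Z)$, the $P_{3,0}$ piece is exactly the first term in \eqref{eq:P3pairing-decomp}.
\item Every transverse component $\mathcal H_j(Z)\otimes\mathcal H_{3-j}(Y)$ with $j<3$ involves a Hermite polynomial of positive degree in $Y$. Since $Y$ is independent of $Z$, any function of $Z$ alone is orthogonal to these components, so $P_\perp\varphi(Z)=0$.
\item Hence $P_\perp f=-P_\perp g$, and therefore $\langle f,P_\perp g\rangle=\langle P_\perp f,P_\perp g\rangle=-\|P_\perp g\|_2^2$.
\end{enumerate}

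\textbf{The bound $\ge\kappa_Q$.} Write $\E[gH_3(Z)]=B+A$ and $\E[fH_3(Z)]=B-A$, where $A\colonequals\int_{-\eta_*}^{\eta_*}H_3(z)\theta_g(z)\gamma_1(z)\,dz$ and $B$ is the outer contribution from Lemma \ref{lem:tail-H3}. The product is $B^2-A^2$. Lemma \ref{lem:A-bound} applies because $\theta_g$ satisfies \eqref{eq:moment-zero}, and gives $|A|\le A_{\max}$. Hence the product is at least $B^2-A_{\max}^2$, i.e. the first term is $\ge\kappa_Q$.

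\textbf{Part (iii).} Since $P_\perp g=P_\perp k$, it suffices to bound the three transverse projections of $k$. The components are mutually orthogonal, so their squared norms add. I will use the conditional functions $b(Y)\colonequals\E[k\mid Y]$, $a(Y)\colonequals\E[kZ\mid Y]$ and $c(Y)\colonequals\E[k(Z^2-1)\mid Y]$.
\begin{enumerate}
\item For $\mathcal H_0(Z)\otimes\mathcal H_3(Y)$: the squared norm is $\|\mathrm{proj}_3 b\|_2^2\le\|b\|_\infty^2\le p^2$.
\item For $\mathcal H_1(Z)\otimes\mathcal H_2(Y)$: the squared norm is $\|\mathrm{proj}_2 a\|_2^2\le\|a\|_\infty^2\le s_1^2$, using $\E[Z^2]=1$ and $|a|\le\E[|Z|1_{\{|Z|<\eta_*\}}]=s_1$.
\item For $\mathcal H_2(Z)\otimes\mathcal H_1(Y)$: since $\E[(Z^2-1)^2]=2$, the squared norm is $\frac12\|\mathrm{proj}_1 c\|_2^2$.
\end{enumerate}
The third component is the delicate one, since the crude bound on $c$ would give $(t_2+p)^2/2$. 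To remove the $-1$, use that $m$ is parallel to $u$, so $\E[gY]=0$. The outer part contributes $\E[\varphi(Z)Y]=0$ by independence, so $\E[kY]=0$, which says $\mathrm{proj}_1 b=0$. Hence $\mathrm{proj}_1 c=\mathrm{proj}_1\E[kZ^2\mid Y]$, and this has norm at most $\|\E[kZ^2\mid Y]\|_\infty\le t_2$. This yields the $\tfrac12t_2^2$ term.

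\textbf{Part (iv).} This follows by combining (ii) and (iii).

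\textbf{Expected obstacle.} The main care needed is in the $\mathcal H_2(Z)\otimes\mathcal H_1(Y)$ term, where the moment constraint must be used to cancel the constant part of $Z^2-1$. A secondary point is justifying that $\theta_g\in\Theta$ with $\alpha=\alpha_*$ exactly, so that $g=\sgn(Z)$ a.s. outside $\{|Z|<\eta_*\}$.
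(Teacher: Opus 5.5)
Your proposal is correct and follows essentially the same route as the paper's proof. Both arguments write $f=\sgn(Z)$ outside the strip and $f=-g$ inside it, and both observe that the $Z$-only part has no transverse third-chaos component, which gives $\langle f,P_\perp g\rangle=-\|P_\perp g\|_2^2$. Both then use $(B+A)(B-A)$ with Lemma~\ref{lem:A-bound} for the zonal term. Finally, both bound the three transverse blocks by sup-norms of conditional expectations given $Y$, using $\E[g\,\ell(Y)]=0$ to remove the $-1$ in $Z^2-1$ for the $\mathcal H_2(Z)\otimes\mathcal H_1(Y)$ block.
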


\begin{proof}
\textbf{Step 1: formulas for $(p,s_1,t_2)$.}
By symmetry,
\[
p\stackrel{\eqref{pstdefs}}{=}\P(-\eta_*<Z<\eta_*)=\Phi(\eta_*)-\Phi(-\eta_*)=2\Phi(\eta_*)-1.
\]
Also
\[
s_1 \stackrel{\eqref{pstdefs}}{=} 2\int_0^{\eta_*} z\gamma_{1}(z)\,dz = 2(\gamma_{1}(0)-\gamma_{1}(\eta_*)),
\]
Finally, we deduce \eqref{eq:p-s1-t2} from
\begin{flalign*}
t_2 
&\stackrel{\eqref{pstdefs}}{=} 2\int_0^{\eta_*} z^2\gamma_{1}(z)\,dz
=2\Big(-\eta_{*}\gamma_{1}(\eta_*)+\int_0^{\eta_*} \gamma_{1}(z)\,dz\Big)\\
&=2\Big(\Phi(\eta_*)-\tfrac12-\eta_*\gamma_{1}(\eta_*)\Big)
\stackrel{\eqref{pstdefs}}{=}p-2\eta_*\gamma_{1}(\eta_*).
\end{flalign*}

\smallskip
\textbf{Step 2: a pointwise identity on the ``flat'' region.}
Since $g$ maximizes $\|R_{\lambda_*}g\|_{1}$, Lemma \ref{maxchar} implies that the conditional bias
$\theta(z)\colonequals\E[g\mid Z=z]$ satisfies $\theta(z)=\sgn(z)$ for $|z|>\eta_*$; since $g\in\{\pm1\}$ this forces
$g=\sgn(Z)$ almost surely on $\{|Z|>\eta_*\}$. In particular, on $\{|Z|>\eta_*\}$ the function $g$ depends only on $Z$.

Since $g\in\{\pm1\}$ and $|Z|<\eta_*$ implies $|\alpha_* Z|<\lambda_*$ by \eqref{eq:reeds-point}, we have pointwise
\begin{equation}\label{fsign}
f=\sgn(\alpha_* Z-\lambda_* g)=
\begin{cases}
g & \text{, if }|Z|>\eta_*\\
-g & \text{, if }|Z|<\eta_*.
\end{cases}
\end{equation}

\smallskip
\textbf{Step 3: transverse pairing is a negative square.}
Let $\psi(Z,Y)$ be 
any Hermite polynomial of
total degree $3$ that is a nonconstant function of $Y$. Then \begin{equation}\label{psizero}
\E[\psi\mid Z]=0
\end{equation}
because its $Y$-part has mean $0$.
Using that $g$ depends only on $Z$ on $\{|Z|>\eta_*\}$, we obtain
\begin{equation}\label{gzero}
\E[g\psi\,1_{\{|Z|>\eta_*\}}]
=\E\big[\E[g\psi\,1_{\{|Z|>\eta_*\}}|Z]\big]
=\E\big[g\,1_{\{|Z|>\eta_*\}}\E[\psi\mid Z]\big]=0,
\end{equation}
hence $\langle g,\psi\rangle=\E[g\psi\,1_{\{|Z|<\eta_*\}}]$.  Similarly, $\E[f\psi\,1_{\{|Z|>\eta_*\}}]=0$ by \eqref{fsign}.  On $\{|Z|<\eta_*\}$ we have $f=-g$ by \eqref{fsign}, hence
\begin{equation}\label{fgneg}
\langle f,\psi\rangle=\E[f\psi\,1_{\{|Z|<\eta_*\}}]=-\E[g\psi\,1_{\{|Z|<\eta_*\}}]=-\langle g,\psi\rangle.
\end{equation}
Let $\{\psi_j\}$ be an orthonormal basis of the transverse third chaos. Then
\[
\langle f,P_\perp g\rangle
=\sum_j \langle f,\psi_j\rangle\langle g,\psi_j\rangle
\stackrel{\eqref{fgneg}}{=}\sum_j (-\langle g,\psi_j\rangle)\langle g,\psi_j\rangle
=-\sum_j \langle g,\psi_j\rangle^2
=-\|P_\perp g\|_2^2.
\]
This gives
\[
\langle f,P_3 g\rangle
\stackrel{\eqref{pdefs}}{=}\langle f,P_{3,0}g\rangle+\langle f,P_\perp g\rangle
=\langle f,P_{3,0}g\rangle-\|P_\perp g\|_2^2,
\]
where $P_{3,0}$ 
is 
defined after \eqref{pdefs}.

\smallskip
\textbf{Step 4: lower bound on the zonal component.}
Since the zonal third chaos is one-dimensional, $P_{3,0}g$ is a scalar multiple of $H_3(Z)$:
\[
P_{3,0}g
\stackrel{\eqref{pdefs}}{=}
\frac{\E[gH_3(Z)]}{6}\,H_3(Z).
\]
Therefore
\[
\langle f,P_{3,0}g\rangle=\frac{\E[gH_3(Z)]\,\E[fH_3(Z)]}{6}.
\]
Then \eqref{fsign} and Lemma \ref{maxchar} imply
$\E[gH_3(Z)]=B+A$ and $\E[fH_3(Z)]=B-A$, where $B$ is defined in \eqref{eq:B}
and $A$ is the inner-region term
\[
A\colonequals\int_{-\eta_*}^{\eta_*} (z^3-3z)\theta(z)\gamma_{1}(z)\,dz,
\qquad \theta(z)=\E[g\mid Z=z]\in[-1,1].
\]
Lemma \ref{maxchar} says
$\int_{-\eta_*}^{\eta_*} z\theta(z)\gamma_{1}(z)\,dz=0$ and Lemma~\ref{lem:A-bound} says
$|A|\stackrel{\eqref{eq:Amax}}{\leq}\eta_*^2(\gamma_{1}(0)-\gamma_{1}(\eta_*))$.  So
\[
\langle f,P_{3,0}g\rangle=\frac{(B+A)(B-A)}{6}=\frac{B^2-A^2}{6}\ge \frac{B^2-A_{\max}^2}{6}\stackrel{\eqref{kdef}}{=}\kappa_Q.
\]

\smallskip
\textbf{Step 5: bounding $\|P_\perp g\|_2^2$ in terms of $(p,s_1,t_2)$.}
Let $I\colonequals1_{\{|Z|<\eta_*\}}$. Define the $Y$-measurable functions
\begin{equation}\label{mdefs}
m_0(Y)\colonequals\E[gI\mid Y],\qquad
a_1(Y)\colonequals\E[gZI\mid Y],\qquad
a_2(Y)\colonequals\frac{1}{\sqrt2}\E[gZ^2 I\mid Y].
\end{equation}
Since $|g|\le 1$ and $Z\perp Y$, we have the pointwise bounds (via \eqref{pstdefs})
\begin{equation}\label{mbounds}
|m_0(Y)|\le \E[I]\stackrel{\eqref{pstdefs}}{=}p,\qquad
|a_1(Y)|\le \E[|Z|I]\stackrel{\eqref{pstdefs}}{=}s_1,\qquad
|a_2(Y)|\le \frac{1}{\sqrt2}\E[Z^2 I]\stackrel{\eqref{pstdefs}}{=}\frac{t_2}{\sqrt2}.
\end{equation}
Now consider each transverse block in the orthogonal decomposition of the third chaos:

\emph{(0,3) block.} For any $\psi\in \mathcal H_3(Y)$, 
\[
\langle g,\psi(Y)\rangle
\stackrel{\eqref{gzero}}{=}\E[gI\,\psi(Y)]
\stackrel{\eqref{mdefs}}{=}\E[m_0(Y)\psi(Y)]
=\langle m_{0}(Y),\psi(Y)\rangle.
\]
Thus $P_{0,3}g\stackrel{\eqref{pdefs}}{=}P_{\mathcal H_3(Y)}(m_0)$ and
\begin{equation}\label{p03bd}
\|P_{0,3}g\|_2\le \|m_0\|_2\le \|m_0\|_\infty
\stackrel{\eqref{mbounds}}{\le} p.
\end{equation}

\emph{(1,2) block.} For any $\psi\in\mathcal H_2(Y)$, we similarly have
\[
\langle g,Z\psi(Y)\rangle
\stackrel{\eqref{gzero}}{=}\E[gZI\,\psi(Y)]
\stackrel{\eqref{mdefs}}{=}\E[a_1(Y)\psi(Y)]
=\langle a_{1}(Y),\psi(Y)\rangle.
\]
Thus $P_{1,2}g\stackrel{\eqref{pdefs}}{=}Z\cdot P_{\mathcal H_2(Y)}(a_1)$ and since $\|Z\|_2=1$,

\begin{equation}\label{p12bd}
\|P_{1,2}g\|_2=\|P_{\mathcal H_2(Y)}(a_1)\|_2\le \|a_1\|_2\le \|a_1\|_\infty\stackrel{\eqref{mbounds}}{\le} s_1.
\end{equation}

\emph{(2,1) block.} Let $\ell(Y)\in\mathcal H_1(Y)$ be any linear form in the $Y$-coordinates.
Since $P_1 g$ is parallel to $u$ (the $Z$-direction), we have $\E[g\,\ell(Y)]=0$.
Also, $\E[gZ^{2}\ell(Y)1_{|Z|>\eta_*}]=0$ because $\ell(Y)$ has mean $0$ and $g$ is constant when $|Z|>\eta_*$.
So, writing $h_2(Z)\colonequals(Z^2-1)/\sqrt2$,
\[
\langle g, h_2(Z)\,\ell(Y)\rangle
=\frac{1}{\sqrt2}\E[g\cdot(Z^2-1)\ell(Y)]
=\frac{1}{\sqrt2}\E[gZ^2 I\,\ell(Y)]
\stackrel{\eqref{mdefs}}{=}\E[a_2(Y)\,\ell(Y)].
\]
Hence $P_{2,1}g=h_2(Z)\cdot P_{\mathcal H_1(Y)}(a_2)$ and since $\|h_2\|_2=1$,
\begin{equation}\label{p21bd}
\|P_{2,1}g\|_2=\|P_{\mathcal H_1(Y)}(a_2)\|_2\le \|a_2\|_2\le \|a_2\|_\infty
\stackrel{\eqref{mbounds}}{\le} \frac{t_2}{\sqrt2}.
\end{equation}

By orthogonality of the three transverse blocks,
\[
\|P_\perp g\|_2^2\stackrel{\eqref{pdefs}}{=}\|P_{0,3}g\|_2^2+\|P_{1,2}g\|_2^2+\|P_{2,1}g\|_2^2
\stackrel{\eqref{p03bd}\wedge\eqref{p12bd}\wedge\eqref{p21bd}}{\le} 
p^2+s_1^2+\frac12 t_2^2,
\]
which is \eqref{eq:Pperp-bound}. Combining with \eqref{eq:P3pairing-decomp} completes the proof.
\end{proof}

We can now plug constants into \eqref{eq:P3pairing-final} and verify the lower bound is positive.

\begin{lemma}[Formal Derivative]\label{lemmaipbd}
Let $g\colon\R^n\to\{\pm1\}$ satisfy $g\in\M$.  Then
\[
\langle \mathrm{sign}(R_{\lambda_*}g),P_3 g\rangle\ge 0.0868120048-0.0414080847\approx 0.0454039202.
\]
\end{lemma}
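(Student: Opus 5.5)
The plan is to apply part (iv) of Lemma \ref{lem:P3pairing-polished} directly, since $g\in\M$ takes values in $\{\pm1\}$ by Lemma \ref{lem:maximizers-are-pm1}. This gives
\[
\langle \sgn(R_{\lambda_*}g),P_3 g\rangle\ \ge\ \kappa_Q-\Big(p^2+s_1^2+\tfrac12 t_2^2\Big).
\]
All that remains is to evaluate the two sides numerically at the Reeds point $(\lambda_*,\eta_*,\alpha_*)$ and check that the difference is positive. No further structural argument is needed. The first number in the statement should be $\kappa_Q$, and the second should be $p^2+s_1^2+\frac12 t_2^2$.

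\textbf{Computing $\kappa_Q$.} I will use $\gamma_1(\eta_*)=\alpha_*/2\approx 0.38611$, $\gamma_1(0)=1/\sqrt{2\pi}\approx0.398942$ and $\eta_*\approx0.255730$. From \eqref{badefs},
\[
B=-2(1-\eta_*^2)\gamma_1(\eta_*)\approx-0.7217,\qquad A_{\max}=\eta_*^2\big(\gamma_1(0)-\gamma_1(\eta_*)\big)\approx 8.4\times10^{-4}.
\]
Hence $\kappa_Q=(B^2-A_{\max}^2)/6\approx0.0868$, and $A_{\max}^2$ is negligible.

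\textbf{Computing the transverse bound.} By \eqref{eq:p-s1-t2}, $p=2\Phi(\eta_*)-1\approx0.2018$ and $s_1=2(\gamma_1(0)-\gamma_1(\eta_*))\approx0.0257$. For $t_2$, note that $2\eta_*\gamma_1(\eta_*)=\eta_*\alpha_*=\lambda_*$ by \eqref{eq:reeds-point}, so $t_2=p-\lambda_*\approx0.0043$. The sum $p^2+s_1^2+\frac12t_2^2$ is then dominated by $p^2\approx0.0407$ and totals about $0.0414$. The difference is therefore $\approx0.0454>0$.

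\textbf{Main obstacle: rigorous certification.} Everything rests on $\eta_*$, which is defined implicitly by $\sqrt{2/\pi}\,\eta_*e^{-\eta_*^2/2}=\lambda_*$.
\begin{enumerate}
\item First I will enclose $\eta_*$ in a short interval $[\eta_-,\eta_+]$ around $0.2557302$. This is justified by monotonicity of $\eta\mapsto\eta e^{-\eta^2/2}$ on $(0,1)$, together with evaluating it at the endpoints.
\item Each quantity is monotone in $\eta$ on this interval. As $\eta$ increases, $|B|$ decreases, while $A_{\max}$, $p$ and $s_1$ increase; $t_2$ is increasing as well, since $t_2'(\eta)=2\eta^2\gamma_1(\eta)$.
\item I will therefore bound $\kappa_Q$ from below using $\eta_+$ for $|B|$ and $\eta_+$ for $A_{\max}$. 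I will bound the transverse sum from above using $\eta_+$ in $p$, $s_1$ and $t_2$.
\item $\Phi$ and $\gamma_1$ at these points can be bounded by truncated Taylor series with explicit remainders.
\end{enumerate}
The margin of roughly $0.045$ is enormous compared with these errors, so even crude interval arithmetic suffices.
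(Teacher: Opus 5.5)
Your proposal follows the paper's proof: apply part (iv) of Lemma \ref{lem:P3pairing-polished}, then evaluate $\kappa_Q\approx 0.0868120048$ and $p^2+s_1^2+\tfrac12 t_2^2\approx 0.0414080847$ at the Reeds point, and your monotone enclosure of $\eta_*$ (together with the correct observation $t_2=p-\lambda_*$) makes the paper's bare numerical evaluation rigorous. One caveat: ``crude'' interval arithmetic only certifies positivity, or the value $\kappa_0=0.0454$ used later, whereas the ten-digit constant as stated has a margin of about $7\times10^{-11}$, so you would need to enclose $\eta_*$, and the values of $\Phi$ and $\gamma_1$, to better than about $10^{-10}$.
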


\begin{proof}
If $\lambda_*=0.197479091$ then $\eta_*\stackrel{\eqref{eq:reeds-point}}{\approx} 0.255730213173163$, $\alpha_*\stackrel{\eqref{eq:reeds-point}}{\approx}0.772216503281451$,
$B\stackrel{\eqref{badefs}}{\approx} -0.721715133242779$, $A_{\max}\stackrel{\eqref{badefs}}{\approx} 0.000839319067615$, so
$\kappa_Q\stackrel{\eqref{kdef}}{\approx} 0.086812004849191$.
Moreover $p\approx 0.201840836034193$, $s_1\approx 0.0256680575214142$, $t_2\approx 0.00436174503419317$,
by \eqref{eq:p-s1-t2}, so $p^2+s_1^2+\tfrac12 t_2^2\approx 0.0414080846777763$. Therefore \eqref{eq:P3pairing-final} concludes the proof.
\end{proof}


\begin{lemma}[Pointwise sign-flip inequality]\label{lem:signflip3}
For all $a,b\in\R$ and all $\beta\ge 0$,
\begin{equation}\label{eq:signflip}
|a-\beta b|
\ \le\
|a|\ -\ \beta\,\mathrm{sign}(a)\,b\ +\ 2\beta\,|b|\,\mathbf{1}_{\{|a|\le \beta |b|\}}.
\end{equation}
\end{lemma}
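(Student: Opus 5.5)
We split into two cases according to whether the sign of $a-\beta b$ agrees with the sign of $a$. The only fact needed is the elementary identity $|x| = \mathrm{sign}(a)\,x$ whenever $x$ and $a$ have the same sign (with the convention that any value of $\mathrm{sign}(0)\in[-1,1]$ is allowed when $x=0$).

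\textbf{Case 1: $|a|>\beta|b|$.} Then $a\neq 0$, and $a-\beta b$ has the same strict sign as $a$, since $|\beta b|<|a|$. Hence
$$|a-\beta b|=\mathrm{sign}(a)(a-\beta b)=|a|-\beta\,\mathrm{sign}(a)\,b.$$
The indicator in \eqref{eq:signflip} vanishes, so \eqref{eq:signflip} holds with equality.

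\textbf{Case 2: $|a|\le\beta|b|$.} Here the indicator equals $1$, and we use the triangle inequality directly:
$$|a-\beta b|\le |a|+\beta|b|.$$
It then suffices to check that
$$|a|+\beta|b|\le |a|-\beta\,\mathrm{sign}(a)\,b+2\beta|b|,$$
which is equivalent to $\beta\,\mathrm{sign}(a)\,b\le\beta|b|$. This holds because $|\mathrm{sign}(a)|\le1$ and $\beta\ge0$, whatever convention is used for $\mathrm{sign}(0)$.

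No step here is a real obstacle. The only point requiring care is the convention for $\mathrm{sign}(0)$, and it is harmless: if $a=0$ then $|a|\le\beta|b|$ holds automatically, so we are in Case 2, where only $|\mathrm{sign}(a)|\le1$ is used.

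This inequality will later be applied pointwise with $a=R_{\lambda}f(x)$ and $b=P_{3}f(x)$ and then integrated against $\gamma_{n}$. The middle term produces $-\beta\langle\mathrm{sign}(R_\lambda f),P_3f\rangle$, which Lemma \ref{lemmaipbd} controls, and the last term is an error supported where $|R_\lambda f|$ is small.
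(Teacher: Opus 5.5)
Your proof is correct and follows essentially the same route as the paper's: the same case split on $|a|>\beta|b|$ versus $|a|\le\beta|b|$, equality via sign agreement in the first case, and the triangle inequality plus $\mathrm{sign}(a)\,b\le|b|$ in the second. Your remark on the $\mathrm{sign}(0)$ convention is a small clarification that the paper leaves implicit.
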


\begin{proof}
If $|a|>\beta|b|$, then $a$ and $a-\beta b$ have the same sign, hence
$|a-\beta b|=|a|-\beta\,\mathrm{sign}(a)\,b$ and the indicator term vanishes.
If $|a|\le \beta|b|$, then 
$|a-\beta b|\le |a|+\beta|b|=|a|-\beta\,\mathrm{sign}(a)\,b + \beta(|b|+\mathrm{sign}(a)b)
\le |a|-\beta\,\mathrm{sign}(a)\,b + 2\beta|b|$.
\end{proof}

\section{Fixed Neighborhood with Norm Drop}\label{secsix}

Informally, Lemma \ref{lemmaipbd} implies that, for small enough $\beta>0$, if $g$ is in a neighborhood of $\M$, then $\|R_{\lambda,\beta}g\|_{1}<\|R_{\lambda}\|_{\infty\to1}$ for small $\beta$, since the quantity in Lemma \ref{lemmaipbd} is a formal derivative of $\|R_{\lambda,\beta}g\|_{1}$ with respect to $\beta$.

In this section, we formalize this argument, first by extending Lemma \ref{lemmaipbd} to a neighborhood of $\M$ in Lemma \ref{lem:pairing-stability}, then formalizing the derivative computation in Proposition \ref{prop:near-refined-fullP3} via Lemma \ref{lem:signflip3}.  Our final desired bound then appears in \eqref{upperconstants}, where we plug in some explicit values of the parameters.





For any $f\colon\R^{n}\to[-1,1]$ write $\mathrm{dist}_1(f,\mathcal M)\colonequals\inf_{g\in\mathcal M}\|f-g\|_1$.

Assume the following uniform constants exist (and do not depend on the dimension $n$):
\begin{align}
L&\colonequals\|R_\lambda\|_{2\to2},\label{eq:L}\\
\kappa_0 &\colonequals \inf_{g\in\mathcal M}\ \Big\langle \sgn(R_\lambda g),\,P_3 g\Big\rangle \ >\ 0,
\label{eq:kappa0}\\
K_0 &\colonequals \sup_{g\in\M}\ \|P_3 g\|_2 \ <\ \infty,
\label{eq:K0}\\
L_0 &\colonequals \sup_{g\in\mathcal M}\ \sup_{t\in(0,1]}\ \frac{\gamma_n(\{|R_\lambda g|\le t\})}{t}\ <\ \infty.
\label{eq:L0}
\end{align}
($L_{0}<\infty$ by Lemmas \ref{mombd}, \ref{lem:maximizers-are-pm1} and \ref{lem:smallball-P1} below, and $\kappa_{0}>0$ by Lemma \ref{lemmaipbd}.)

\begin{lemma}[$L_{1}$ bounds for Projections]\label{l1bd}
Let $h\colon\R^{n}\to[-1,1]$ with $\|h\|_{1}\leq1/100$.  Then
$$\|P_{1}h\|_{1}\leq\frac{1}{2}\|h\|_{1}\log(1/\|h\|_{1}).$$
$$\|P_{3}h\|_{1}\leq(e/\sqrt{3})^{3}\|h\|_{1}[\log(1/\|h\|_{1})]^{3/2}.$$
Also, if $h\colon\R^{n}\to[-1,1]$ (with no other assumptions), then $\|P_{3}h\|_{1}\leq 1$.
\end{lemma}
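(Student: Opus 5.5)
The plan is to handle $P_1$ by an explicit Gaussian tail computation, and $P_3$ by duality together with Gaussian hypercontractivity. The final claim is immediate: $\|P_{3}h\|_{1}\le\|P_{3}h\|_{2}\le\|h\|_{2}\le 1$, since $P_3$ is an orthogonal projection and $|h|\le1$. Throughout, write $\delta\colonequals\|h\|_{1}\le 1/100$ and $L\colonequals\log(1/\delta)\ge\log 100>4.6$.

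\emph{The $P_1$ bound.} Write $P_{1}h(x)=\langle m,x\rangle$ with $m=\E[h(X)X]$, so $\|P_{1}h\|_{1}=\|m\|_{2}\,\E|Z|=\sqrt{2/\pi}\,\|m\|_{2}$. Set $u=m/\|m\|_2$. Then
$$\|m\|_{2}=\E[h\langle u,X\rangle]\le\E[|h|\,|Z|],\qquad Z=\langle u,X\rangle .$$
Next, maximize $\E[w|Z|]$ over $0\le w\le 1$ with $\E w\le\delta$; the function $|h|$ is a competitor because $|h|\le1$. The bathtub principle gives $w=1_{\{|Z|>t\}}$ with $2(1-\Phi(t))=\delta$, and the value is $2\gamma_{1}(t)$.

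Mills-type bounds give $2\gamma_1(t)\le\delta(t+1/t)$ and $t\le\sqrt{2L}$. Hence
$$\|P_{1}h\|_{1}\le\sqrt{2/\pi}\,\delta\bigl(\sqrt{2L}+1/t\bigr).$$
For $\delta\le1/100$ we have $t\ge2.5$, and $L\ge4.6$. It remains to check numerically that $\sqrt{2/\pi}(\sqrt{2L}+0.4)\le L/2$ for all $L\ge4.6$; this holds because the right side grows faster than the left. This numerical comparison at the endpoint $L=\log 100$ is the only delicate point in this part.

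\emph{The $P_3$ bound.} Let $s=\sgn(P_{3}h)$. By self-adjointness of $P_3$,
$$\|P_{3}h\|_{1}=\langle s,P_{3}h\rangle=\langle P_{3}s,h\rangle\le\E\bigl[|P_{3}s|\,|h|\bigr].$$
Apply Hölder with exponents $q\ge2$ and $q'=q/(q-1)$. Because $|h|\le1$, we have $\E|h|^{q'}\le\E|h|=\delta$, so $\|h\|_{q'}\le\delta^{1-1/q}$.

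Gaussian hypercontractivity for the third chaos gives $\|P_{3}s\|_{q}\le(q-1)^{3/2}\|P_{3}s\|_{2}\le(q-1)^{3/2}$, using $\|P_3s\|_2\le\|s\|_2\le1$. Therefore
$$\|P_{3}h\|_{1}\le q^{3/2}\,\delta\, e^{L/q}.$$
Choose $q=2L/3$, which is at least $3\ge2$ since $L\ge4.6$. This yields
$$\|P_3h\|_1\le(2e/3)^{3/2}\,\delta L^{3/2}\approx 2.44\,\delta L^{3/2},$$
and $2.44$ is well below $(e/\sqrt3)^{3}\approx3.87$. The only step requiring care is citing the hypercontractive estimate with the constant $(q-1)^{k/2}$ for the $k$-th chaos; the choice of $q$ is routine.
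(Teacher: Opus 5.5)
\textbf{There is a genuine gap, in the $P_1$ part, at exactly the step you flagged as delicate.} The claim that $\sqrt{2/\pi}\,(\sqrt{2L}+0.4)\le L/2$ for all $L\ge 4.6$ is false. At $L=\log 100\approx 4.605$ the left side is about $2.74$ and the right side about $2.30$. That the right side grows faster only shows the difference is increasing. With your Mills-type estimates the inequality first holds near $L\approx 6.3$, i.e.\ for $\|h\|_1\lesssim 1.8\cdot 10^{-3}$.

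Sharper tail estimates cannot close this, because your bathtub reduction is exact and the extremal value itself exceeds the bound at the endpoint. Take $n=1$ and $h(x)=\sgn(x)1_{\{|x|>t\}}$ with $2\Phi(-t)=1/100$, so $t\approx 2.5758$. Then $\|h\|_1=1/100$ and $\|P_1h\|_1=2\sqrt{2/\pi}\,\gamma_1(t)\approx 0.02307$, while $\frac12\|h\|_1\log(1/\|h\|_1)=\frac{\log 100}{200}\approx 0.02303$. So the first inequality, as stated, fails for $\|h\|_1$ near $1/100$, and no argument can prove it on the whole range.

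\textbf{Comparison with the paper.} The paper's proof asserts that a half-space indicator $1_{[a,\infty)}$ is extremal and then uses $\gamma_1(a)\le .583\,\Phi(-a)\log(1/\Phi(-a))$. Your odd two-sided tail has the same $L_1$ norm and a larger $\|P_1h\|_1$ (about $0.02307$ versus $0.02127$ at $\|h\|_1=1/100$). So that reduction is wrong and yours is the correct one.

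\textbf{What your argument does prove.} It establishes a corrected statement. For $\|h\|_1\le 10^{-3}$ one has $t\ge 3.29$, and $\sqrt{2/\pi}(\sqrt{2L}+0.31)\le L/2$ holds for all $L\ge\log 1000$. Alternatively, on the full range $\|h\|_1\le 1/100$ your estimate works if the constant $\frac12$ is replaced by $0.6$: at the endpoint $2.741\le 0.6\cdot 4.605\approx 2.763$, and the gap increases with $L$. Either version suffices downstream, since the lemma is only applied with $\|h\|_1\le 10^{-7}$.

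\textbf{The rest.} Your $P_3$ argument and the final claim are correct and follow the paper's route: duality, H\"older, and hypercontractivity $\|P_3 s\|_q\le (q-1)^{3/2}\|P_3 s\|_2$. Your exponent $q=2L/3$ is the optimal choice. It yields the constant $(2e/3)^{3/2}\approx 2.44$, better than the paper's $(e/\sqrt3)^3\approx 3.87$ from $r=1+L/3$.
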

\begin{proof}
Suppose $\|h\|_{1}$ is fixed.  A rearrangement argument shows that the first inequality is saturated when $h$ is the indicator function of a half space, so we may assume $n=1$ and $h=1_{[a,\infty]}$ with $a>2.3$, in which case $\|h\|_{1}=\int_{a}^{\infty}\gamma_{1}(z)dz$ and $\|P_{1}h\|_{1}=\int_{\R}|z|\gamma_{1}(z)dz\cdot\int_{a}^{\infty}z\gamma_{1}(z)dz
=\sqrt{2/\pi}\gamma_{1}(a)$.  So, using $\gamma_{1}(a)\leq .583\Phi(-a)\log(1/\Phi(-a))$ $\forall$ $a\geq2.3$, we get
$$
\|P_{1}h\|_{1}=\sqrt{2/\pi}\gamma_{1}(a)\leq \sqrt{2/\pi}(.583) \int_{a}^{\infty}\gamma_{1}(z)dz\cdot\log\Big(1/\int_{a}^{\infty}\gamma_{1}(z)dz\Big).
$$

For the $P_{3}$ bound, we let $g\colon\R^{n}\to[-1,1]$, let $r,s>1$ with $1/r+1/s=1$ and $r\colonequals1+\frac{1}{3}\log(1/\|h\|_{1})>2$ and use H\"{o}lder's inequality to write
\begin{equation}\label{gp3bd}
|\langle g,P_{3}h\rangle|
=|\langle P_{3}g,h\rangle|
\leq\|P_{3}g\|_{r}\|h\|_{s}.
\end{equation}
By hypercontractivity for the $P_{3}$ term \cite{gross75}, for every $r\ge 2$, $\|P_{3}g\|_r\le (r-1)^{3/2}\|P_{3}g\|_2\leq(r-1)^{3/2}\|g\|_{2}\leq(r-1)^{3/2}$.  Since $|h|\leq1$ and $s\geq1$, we have $\|h\|_{s}^{s}\leq\|h\|_{1}$, so $\|h\|_{s}\leq\|h\|_{1}^{1/s}$.  In summary, (after taking the supremum over all $g\colon\R^{n}\to[-1,1]$ in \eqref{gp3bd}),
$$\|P_{3}h\|_{1}\leq\|h\|_{1}^{1/s}(r-1)^{3/2}
=\|h\|_{1}^{1-1/r}(r-1)^{3/2}.$$
Now, by definition of $r$, since $\|h\|_{1}<1/100$, we have $r>2$ and
$$
\|h\|_{1}^{-1/r}
=\exp\Big(\frac{\log(1/\|h\|_{1})}{1+\frac{1}{3}\log(1/\|h\|_{1})}\Big)\leq e^{3}.
$$
So, combining the above,
$$
\|P_{3}h\|_{1}
\leq \|h\|_{1}e^{3}((1/3)\log(1/\|h\|_{1}))^{3/2}.
$$
For the final assertion, note that $\|P_{3}h\|_{1}\leq\|P_{3}h\|_{2}\leq\|h\|_{2}\leq1$.
\end{proof}

For any integer $k\geq1$, denote $\mathcal{H}_k$ as the image of $P_{k}$ in $L_{2}(\R^{n},\gamma_{n})$.

\begin{lemma}[Third chaos on a bounded $Z$-strip]\label{lem:strip-H3}
Let $h\in\mathcal H_3$.  Fix $u\in S^{n-1}$ and write $X=Zu+Y$ where $Z=\langle u,X\rangle\sim N(0,1)$ and $u\perp Y$.
Then for every $z_0>0$ and every measurable $A\subseteq\{|Z|\le z_0\}$,
\begin{equation}\label{eq:strip-L1}
\int_{\R^{n}} |h(x)|\,1_A(x)\,\gamma_{n}(x)dx
\ \le\
\sqrt{C_{z_0}}\ \|h\|_2\ \gamma_n(A),
\end{equation}
where
\begin{equation}\label{eq:Cz0}
C_{z_0}\colonequals\sup_{|z|\le z_0}\Big(\frac{H_3(z)^2}{6}+\frac{H_2(z)^2}{2}+z^2+1\Big),
\qquad H_2(z)=z^2-1,\quad H_3(z)=z^3-3z.
\end{equation}
\end{lemma}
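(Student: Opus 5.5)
The plan is to expand $h$ along the splitting $X=Zu+Y$, bound $|h|$ pointwise by Cauchy--Schwarz on the strip, and then integrate over $A$. At the last step this runs into a genuine gap: the statement as worded appears to be false, and below I give a counterexample and the strongest form that the argument does prove.

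\textbf{Step 1: decomposition.} Since $(Z,Y)$ are independent standard Gaussians, the third chaos $\mathcal H_3$ splits orthogonally, exactly as in \eqref{pdefs}:
\[
h=c\,\tfrac{H_3(Z)}{\sqrt6}+\tfrac{H_2(Z)}{\sqrt2}\,q_1(Y)+Z\,q_2(Y)+q_3(Y).
\]
Here $c\in\R$ and each $q_k$ lies in the $k$-th chaos of $Y$. The four summands are orthogonal and the normalizations are chosen so that
\[
\|h\|_2^2=c^2+\|q_1\|_2^2+\|q_2\|_2^2+\|q_3\|_2^2 .
\]

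\textbf{Step 2: pointwise bound on the strip.} Apply Cauchy--Schwarz in $\R^4$ to the vectors
\[
\Big(\tfrac{H_3(z)}{\sqrt6},\ \tfrac{H_2(z)}{\sqrt2},\ z,\ 1\Big)\quad\text{and}\quad \big(c,\,q_1(y),\,q_2(y),\,q_3(y)\big).
\]
For $|z|\le z_0$ this gives
\[
|h(zu+y)|\le \sqrt{C_{z_0}}\;Q(y),\qquad Q(y)\colonequals\big(c^2+q_1(y)^2+q_2(y)^2+q_3(y)^2\big)^{1/2},
\]
with $C_{z_0}$ as in \eqref{eq:Cz0}. Integrating over $A\subseteq\{|Z|\le z_0\}$ then yields
\[
\int|h|1_A\,d\gamma_n\le\sqrt{C_{z_0}}\int Q(Y)1_A\,d\gamma_n .
\]
One factor of $\gamma_n(A)$ can be extracted from this by conditioning on $Y$ and using $\E_Y[Q^2]=\|h\|_2^2$.

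\textbf{Step 3, the main obstacle.} Step 2 reduces \eqref{eq:strip-L1} to showing $\int Q(Y)1_A\,d\gamma_n\le \|h\|_2\,\gamma_n(A)$. This would follow from $\|Q\|_\infty\le\|h\|_2$, but that fails: $Q$ is a polynomial in $Y$ and is unbounded. My first attempt would have been to replace the sup by an average using the independence of $Z$ and $Y$. That works only when $A$ is a product set $\{|Z|\le z_0\}\times\R^{n-1}$, or more generally when the $Y$-section of $A$ has full $\gamma$-measure.

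For general $A$ the linear bound appears false. Take
\[
h=\tfrac{H_3(Y_1)}{\sqrt6},\qquad A=\{|Z|\le z_0,\ Y_1>M\}.
\]
Then $\|h\|_2=1$, while $\int|h|1_A\,d\gamma_n\gtrsim M^3\gamma_n(A)/\sqrt6$, which exceeds $\sqrt{C_{z_0}}\,\gamma_n(A)$ once $M$ is large. So the argument cannot be completed as worded.

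What the steps above do prove without extra input is the following.
\begin{itemize}
\item \textbf{Product sets.} The stated bound holds with equality-level sharpness for $A=\{|Z|\le z_0\}$, and more generally for any $A$ whose $Y$-sections have full measure.
\item \textbf{General $A$.} Applying H\"older to $Q$ instead of the sup gives
\[
\int|h|1_A\,d\gamma_n\le\sqrt{C_{z_0}}\,\|Q\|_{r}\,\gamma_n(A)^{1-1/r}\le\sqrt{C_{z_0}}\,(r-1)^{3/2}\,\|h\|_2\,\gamma_n(A)^{1-1/r},
\]
using hypercontractivity as in Lemma \ref{l1bd}. Choosing $r\approx\log(1/\gamma_n(A))$ makes the loss over \eqref{eq:strip-L1} only a factor $\log(1/\gamma_n(A))^{3/2}$.
\end{itemize}
I would check whether later uses of this lemma can accept that logarithmic loss, or only ever apply it to sets of the product form.
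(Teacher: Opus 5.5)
Your diagnosis is correct: the statement is false for $n\ge2$. Your counterexample works: $h=H_3(Y_1)/\sqrt6$ lies in $\mathcal H_3$ with $\|h\|_2=1$. For $M>\sqrt3$, $H_3$ is positive and increasing on $(M,\infty)$. So on $A=\{|Z|\le z_0,\ Y_1>M\}$ you get $\int|h|1_A\,d\gamma_n\ge H_3(M)\,\gamma_n(A)/\sqrt6$, which exceeds $\sqrt{C_{z_0}}\,\gamma_n(A)$ once $M$ is large. (For $n=1$ the lemma holds trivially, since $h$ is then a multiple of $H_3(Z)$.)

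The paper's proof uses the same orthogonal decomposition as your Step 1 and bounds $\E[h^2\mid Z=z]\le C_{z_0}\|h\|_2^2$ for $|z|\le z_0$. It then writes $\int_A h^2\,d\gamma_n=\int_A\E[h^2\mid Z]\,d\gamma_n$ before applying Cauchy--Schwarz. That identity holds only when $A$ is $\sigma(Z)$-measurable, which is exactly the gap you found. So the paper's argument, like yours, proves the lemma only for cylinders $A=\{Z\in E\}$ with $E\subseteq[-z_0,z_0]$. On such sets your pointwise Cauchy--Schwarz and the paper's conditional-second-moment version do the same job.

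On your closing question: no logarithmic loss is needed downstream. The lemma is used only in Lemma~\ref{lem:flip-correction-t}, with $A=\{|R_\lambda f|\le t\}$. That set is not a $Z_f$-cylinder, because $f$ depends on $Y$. However, Lemma~\ref{lem:smallball-P1} gives $\{|R_\lambda f|\le t\}\subseteq A'\colonequals\{\bigl||Z_f|-\eta_f\bigr|\le t/\alpha_f\}$, which is a $Z_f$-cylinder. It lies in $\{|Z_f|\le z_0\}$ under the stated condition $z_0\ge\eta_f+t/\alpha_{\min}$. Moreover, \eqref{eq:smallball-linear-f} is really the estimate $\gamma_n(A')\le 4\gamma_1(0)t/\alpha_{\min}$. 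Since $|P_3f|\ge0$, you can enlarge $A$ to $A'$ and apply the cylinder version; this gives \eqref{eq:flip-correction-final} with the same $K_{\mathrm{strip}}$.

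A side remark on your general-$A$ bound. Your route through $Q$ is dominated by applying H\"older and hypercontractivity to $h$ directly, which gives $\int|h|1_A\,d\gamma_n\le(r-1)^{3/2}\|h\|_2\,\gamma_n(A)^{1-1/r}$. This drops the factor $\sqrt{C_{z_0}}\ge\sqrt{3/2}$. It also avoids an unstated step in your version: since $Q$ is not in a single chaos, bounding $\|Q\|_r$ needs Minkowski in $L_{r/2}$ applied to $Q^2$. Your counterexample shows the resulting $\log(1/\gamma_n(A))^{3/2}$ loss is sharp up to constants.
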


\begin{proof}
Decompose the third chaos in $(Z,Y)$ as
\[
h(Z,Y)=aH_3(Z)+H_2(Z)L(Y)+ZQ(Y)+C(Y),
\]
with $a\in\R$, $L\in\mathcal H_1(Y)$, $Q\in\mathcal H_2(Y)$, $C\in\mathcal H_3(Y)$ and the four summands are orthogonal (conditional on $Z=z$).
Then for each $z\in\R$,
\[
\E[h^2\mid Z=z]=a^2H_3(z)^2+H_2(z)^2\E[L^2]+z^2\E[Q^2]+\E[C^2]
\le \Big(\frac{H_3(z)^2}{6}+\frac{H_2(z)^2}{2}+z^2+1\Big)\|h\|_2^2.
\]
Thus on $A\subseteq\{|Z|\le z_0\}$,
\[
\int_A (h(x))^{2}\,\gamma_{n}(x)dx = \int_A \E[h^2\mid Z]\,\gamma_{n}(x)dx \le C_{z_0}\|h\|_2^2\,\gamma_n(A),
\]
and \eqref{eq:strip-L1} follows by the Cauchy--Schwarz inequality.
\end{proof}


\begin{lemma}[Tail bound for $\mathcal H_3$]\label{lem:H3-tail2}
Let $h\in\mathcal H_3$ with $\|h\|_2\le 1$. Then for every $s\ge e$,
\begin{equation}\label{eq:H3-tail}
\int_{\R^{n}} |h(x)|\,1_{\{|h|\ge s\}}\,\gamma_{n}(x)dx
\ \le\
\exp\Big(-\frac12(s/e)^{2/3} - \frac{1}{2}\Big).
\end{equation}
\end{lemma}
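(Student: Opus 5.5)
The plan is to use hypercontractivity in the same form as in Lemma \ref{l1bd}: for every $q\ge2$ and every $h\in\mathcal H_3$ we have $\|h\|_q\le (q-1)^{3/2}\|h\|_2\le (q-1)^{3/2}$. From this we get a moment bound and then a tail bound via Markov's inequality. By H\"older's inequality and Markov's inequality, for any $q\ge2$ and $s>0$,
\[
\int_{\R^n}|h|1_{\{|h|\ge s\}}\gamma_n(x)dx\le \E[|h|^q]\,s^{1-q}\le s\,\big((q-1)^{3/2}/s\big)^{q},
\]
because $|h|1_{\{|h|\ge s\}}\le |h|^q s^{1-q}$ pointwise. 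Writing $q-1=m$, the right side equals $s\,(m^{3/2}/s)^{m+1}=m^{3/2}(m^{3/2}/s)^{m}$.

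The next step is to optimize over $m$. The natural choice is $m^{3/2}=s/e$, that is, $m=(s/e)^{2/3}$. This gives the bound $(s/e)\,e^{-(s/e)^{2/3}}$. The requirement $q\ge2$ means $m\ge1$, which holds exactly when $s\ge e$, and this explains the hypothesis $s\ge e$.

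It then remains to compare $(s/e)e^{-m}$, where $m=(s/e)^{2/3}$, with the target $e^{-m/2-1/2}$. This comparison is equivalent to $m^{3/2}e^{-m/2}\le e^{-1/2}$ for $m\ge1$. Its logarithm is $\tfrac32\log m-\tfrac m2+\tfrac12$, which equals $0$ at $m=1$. However, it is increasing up to $m=3$, so this crude choice fails.

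The main obstacle is therefore this constant bookkeeping. I would first try a different choice of $m$, for example balancing to get the factor $e^{-m/2}$ directly: take $m$ with $(m^{3/2}/s)^m m^{3/2}\le e^{-m/2-1/2}$. If that does not work, I would use the sharper moment form of hypercontractivity. One option is to apply Markov's inequality at level $q$ to $|h|^{q}$ with $q$ slightly larger. Another is to use the exact Gaussian-chaos moment estimate $\|h\|_q\le (q-1)^{3/2}$ but with the threshold $s=e\,m^{3/2}$ adjusted so that the polynomial factor $m^{3/2}$ is absorbed, using $m^{3/2}\le e^{m/2}\cdot C$ together with the spare $e^{-m/2}$.

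Concretely, I would choose $m$ to satisfy $m^{3/2}=s e^{-1/2}\cdot(\text{const})$, so that the ratio $m^{3/2}/s$ equals $e^{-1/2}$ times a factor. I would then check that the remaining polynomial factor is dominated by $e^{-1/2}$ for $m\ge1$, which can be verified by an elementary calculus check at the endpoint and via monotonicity.
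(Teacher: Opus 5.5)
\textbf{There is a genuine gap: the proposal stops at exactly the step that decides the lemma.} Your choice $q=1+(s/e)^{2/3}$ is in fact the same exponent the paper uses. You correctly find that your bound $s\,(\|h\|_q/s)^q$ then equals $m^{3/2}e^{-m}$ with $m=(s/e)^{2/3}$. This exceeds the target $e^{-m/2-1/2}$ on a whole range of $m>1$ (e.g.\ $m=3$ gives about $0.26$ versus $0.14$). The remedies you then list are never made concrete or verified: a ``balancing'' choice of $m$, Markov at a slightly larger $q$, and $m^{3/2}=se^{-1/2}\cdot(\mathrm{const})$. So no bound valid for all $s\ge e$ is actually produced. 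The source of the trouble is the pointwise inequality $|h|1_{\{|h|\ge s\}}\le |h|^q s^{1-q}$. It leaves a polynomial prefactor $s=e\,m^{3/2}$ that is too large for moderate $m$.

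\textbf{The missing idea.} What the paper does is separate the integrand from the tail event by Cauchy--Schwarz before using Markov:
$\int_{\R^n}|h|1_{\{|h|\ge s\}}\gamma_n(x)dx\le\|h\|_2\,\gamma_n(|h|\ge s)^{1/2}\le\gamma_n(|h|\ge s)^{1/2}$.
Markov with the same $r=1+(s/e)^{2/3}\ge2$ then gives $\gamma_n(|h|\ge s)\le\big((r-1)^{3/2}/s\big)^r=e^{-r}$. Its square root is exactly $\exp(-\tfrac12(s/e)^{2/3}-\tfrac12)$. The factors $\tfrac12$ in the statement come from this step, and no prefactor survives.

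\textbf{Your route can still be completed, but only with two regimes.}
For $e\le s\le e^{3/2}$, take $q=2$ to get the bound $1/s$. This is below the target since $1+2x\ge e^{2x/3}$ for $x=\log(s/e)\in[0,\tfrac12]$.
For $s\ge e^{3/2}$, take $m=(se^{-3/2})^{2/3}\ge1$. This gives $m^{3/2}e^{-3m/2}$ against the target $\exp(-\tfrac12e^{1/3}m-\tfrac12)$. The comparison holds because $\tfrac32\log m-(\tfrac32-\tfrac12e^{1/3})m+\tfrac12\le\tfrac32\log\tfrac{3}{3-e^{1/3}}-1\approx-0.06$.
That version even yields a better exponent for large $s$, but it is more bookkeeping than the paper's two-line argument.
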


\begin{proof}
By hypercontractivity for $\mathcal{H}_{3}$ \cite{gross75}, for every $r\ge 2$, $\|h\|_r\le (r-1)^{3/2}\|h\|_2\le (r-1)^{3/2}$.
By Markov's inequality, $\gamma_n(|h|\ge s)\le \|h\|_r^r/s^r$.
Choose $r\colonequals1+(s/e)^{2/3}\ge 2$, so $(r-1)^{3/2}=s/e$, giving $\gamma_n(|h|\ge s)\le e^{-r}=\exp(-(s/e)^{2/3} -1)$.
Then by the Cauchy-Schwarz inequality, $\int_{\R^{n}} |h(x)|1_{\{|h|\ge s\}}\gamma_{n}(x)dx\le \|h\|_2\,\gamma_n(|h|\ge s)^{1/2}\le \exp(-\tfrac12(s/e)^{2/3}-1/2)$.
\end{proof}


\begin{lemma}[Small-ball via the $P_1$-direction]\label{lem:smallball-P1}
Let $f\colon\R^n\to\{-1,1\}$ and write $P_1 f(x)=\alpha_f\langle u_f,x\rangle$ with $\alpha_f=\|P_1 f\|_2$, $u_{f}\in S^{n-1}$.
Set $Z_f\colonequals\langle u_f,X\rangle\sim N(0,1)$ and $\eta_f\colonequals\lambda/\alpha_f$.
Then
\[
\{|R_\lambda f|\le t\}\subseteq \Big\{\big||Z_f|-\eta_f\big|\le \frac{t}{\alpha_f}\Big\},\qquad\forall\,t>0.
\]
Consequently, if $\alpha_f\ge \alpha_{\min}\colonequals.6>0$ and $t>0$, then
\begin{equation}\label{eq:smallball-linear-f}
\gamma_n(|R_\lambda f|\le t)\le \frac{4\gamma_{1}(0)}{\alpha_{\min}}\,t=\frac{4}{\alpha_{\min}\sqrt{2\pi}}\,t.
\end{equation}
\end{lemma}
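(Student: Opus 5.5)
Since $f$ takes values in $\{-1,1\}$, the plan is to split $\R^{n}$ according to the value of $f$ and reduce everything to the one-dimensional Gaussian $Z_f$. By definition of $u_f$ and $\alpha_f$ we have $P_1 f(x)=\alpha_f\langle u_f,x\rangle=\alpha_f Z_f$, so pointwise
$$R_\lambda f=\alpha_f Z_f-\lambda f(x).$$
On $\{f=1\}$ this equals $\alpha_f Z_f-\lambda$, and on $\{f=-1\}$ it equals $\alpha_f Z_f+\lambda$. If $|R_\lambda f|\le t$ at a point with $f=1$, then $|Z_f-\eta_f|\le t/\alpha_f$. At a point with $f=-1$ we get instead $|Z_f+\eta_f|\le t/\alpha_f$.

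The next step converts this into a statement about $|Z_f|$. By the reverse triangle inequality, $\big||Z_f|-\eta_f\big|\le |Z_f\mp\eta_f|$ (using $\eta_f>0$), so in either case $\big||Z_f|-\eta_f\big|\le t/\alpha_f$. This proves the claimed inclusion for every $t>0$.

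For the measure bound, I use that $Z_f\sim N(0,1)$ because $u_f$ is a unit vector. The set $\{z\colon ||z|-\eta_f|\le t/\alpha_f\}$ is contained in the union of two intervals, each of length $2t/\alpha_f$, centred at $\pm\eta_f$. The standard Gaussian density is bounded by $\gamma_1(0)$, so each interval has $\gamma_1$-measure at most $2t\gamma_1(0)/\alpha_f$, and the union has measure at most $4\gamma_1(0)t/\alpha_f$. Finally, $\alpha_f\ge\alpha_{\min}=.6$ gives $1/\alpha_f\le 1/\alpha_{\min}$, which yields \eqref{eq:smallball-linear-f}. (Note that $\alpha_f>0$ is needed for $u_f$ and $\eta_f$ to be defined; this is implicit in the hypothesis $\alpha_f\ge .6$.)

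I see no genuine obstacle here; the only point needing care is the reverse-triangle-inequality step, which merges the two hyperplane-type sets into one symmetric condition on $|Z_f|$.
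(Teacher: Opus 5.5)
Your proof is correct and follows essentially the paper's route. The paper gets the inclusion in one line from $|P_1 f-\lambda f|\ge\big||P_1 f|-\lambda\big|$ (using $|f|=1$), which is the same reverse-triangle-inequality step you carry out after splitting on the value of $f$. For the measure bound, your union bound over the two intervals of length $2t/\alpha_f$ centred at $\pm\eta_f$ is slightly cleaner than the paper's exact formula $2(\Phi(\eta_f+\delta)-\Phi(\eta_f-\delta))$, because it avoids the paper's separate treatment of the cases $t<\lambda$ and $t>\lambda$.
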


\begin{proof}
Pointwise, $|R_\lambda f|\stackrel{\eqref{rdef}}{=}|P_1 f-\lambda f|\ge \big||P_1 f|-|\lambda f|\big|= \big||P_1 f|-\lambda\big|$ since $|f|= 1$.
With $|P_1 f|=\alpha_f|Z_f|$, this is $\ge \alpha_f\big||Z_f|-\eta_f\big|$, proving the inclusion.
The probability bound follows by the exact formula
$\P(||Z_{f}|-\eta|\le \delta)=2(\Phi(\eta+\delta)-\Phi(\eta-\delta))\le 4\gamma_{1}(0)\delta$ with $\delta=t/\alpha_{f}$ and $0<t<\lambda$, which implies $\delta<\eta$.  (If $t>\lambda$, then $\delta>\eta$ and $\P(||Z_{f}|-\eta|\leq\delta)=\P(|Z_{f}|\leq\eta+\delta)=\Phi(\eta+\delta)-\Phi(-\eta-\delta)\leq 4\gamma_{1}(0)\delta$.)
\end{proof}


\begin{lemma}[Flip correction bound with a free cutoff $t$]\label{lem:flip-correction-t}
Let $f\colon\R^n\to\{-1,1\}$ and set $a\colonequals R_\lambda f$ and $b\colonequals P_3 f\in\mathcal H_3$.
Assume $\alpha_f=\|P_1 f\|_2\ge \alpha_{\min}>0$.
Fix $\beta\in(0,1)$ and choose any cutoff $t\in(0,\lambda)$.
Assume also that $\{|a|\le t\}\subseteq \{|Z_f|\le z_0\}$ for some $z_0>0$
(e.g.\ it suffices by the triangle inequality that $z_0\ge \eta_f+t/\alpha_{\min}$).  
%
%
Assume $t/\beta>e$.  Then
\begin{equation}\label{eq:flip-correction-t}
\int_{\R^{n}} |b(x)|\,1_{\{|a|\le \beta|b|\}}\,\gamma_{n}(x)dx
\ \le\
\sqrt{C_{z_0}}\ \|b\|_2\ \gamma_n(|a|\le t)\;+\;\exp\Big(-\frac12\big(\tfrac{t}{e\beta}\big)^{2/3}-\frac{1}{2}\Big),
\end{equation}
where $C_{z_0}$ is as in \eqref{eq:Cz0}. In particular, since $\|b\|_2\le \|f\|_2\le 1$,
combining with \eqref{eq:smallball-linear-f} yields
\begin{equation}\label{eq:flip-correction-final}
2\beta\int_{\R^{n}} |P_3 f(x)|\,1_{\{|R_\lambda f|\le \beta|P_3 f|\}}\,\gamma_{n}(x)dx
\ \le\
\underbrace{\frac{8\sqrt{C_{z_0}}}{\alpha_{\min}\sqrt{2\pi}}}_{=:K_{\mathrm{strip}}}\ \beta t
\;+\;2\beta\exp\Big(-\frac12\big(\tfrac{t}{e\beta}\big)^{2/3}-\frac{1}{2}\Big).
\end{equation}
\end{lemma}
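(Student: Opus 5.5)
The plan is to split the event $\{|a|\le\beta|b|\}$ according to whether $|a|\le t$ or $|a|>t$.

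\textbf{Region $\{|a|\le t\}$.} Here we drop the constraint $|a|\le\beta|b|$ and simply bound
\[
\int|b|1_{\{|a|\le t\}}\gamma_n.
\]
By hypothesis $\{|a|\le t\}\subseteq\{|Z_f|\le z_0\}$. Since $b=P_3f\in\mathcal H_3$, Lemma \ref{lem:strip-H3} applies with $u=u_f$ and $A=\{|a|\le t\}$. This gives the bound
\[
\sqrt{C_{z_0}}\,\|b\|_2\,\gamma_n(|a|\le t).
\]

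\textbf{Region $\{|a|>t\}$.} Here the constraint $|a|\le\beta|b|$ forces $|b|>t/\beta$. So this part is at most
\[
\int|b|1_{\{|b|\ge t/\beta\}}\gamma_n.
\]
We have $\|b\|_2\le\|f\|_2=1$, since $P_3$ is an orthogonal projection and $|f|=1$. Applying Lemma \ref{lem:H3-tail2} with $s=t/\beta\ge e$ bounds this by
\[
\exp\!\left(-\tfrac12\left(\tfrac{t}{e\beta}\right)^{2/3}-\tfrac12\right).
\]
Adding the two regions yields \eqref{eq:flip-correction-t}.

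\textbf{Hypothesis on $z_0$.} The parenthetical remark follows from Lemma \ref{lem:smallball-P1}. On $\{|a|\le t\}$ we have $\bigl||Z_f|-\eta_f\bigr|\le t/\alpha_f\le t/\alpha_{\min}$, so $|Z_f|\le\eta_f+t/\alpha_{\min}$.

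\textbf{Deriving \eqref{eq:flip-correction-final}.} Use $\|b\|_2\le1$ and the small-ball bound \eqref{eq:smallball-linear-f}, namely
\[
\gamma_n(|a|\le t)\le\frac{4t}{\alpha_{\min}\sqrt{2\pi}}.
\]
That bound was stated with the specific constant $\alpha_{\min}=.6$, but its proof only uses $\alpha_f\ge\alpha_{\min}$, so it holds for the general $\alpha_{\min}$ here. Multiplying the resulting estimate by $2\beta$ gives
\[
\frac{8\sqrt{C_{z_0}}}{\alpha_{\min}\sqrt{2\pi}}\,\beta t+2\beta\exp(\cdots),
\]
which is \eqref{eq:flip-correction-final}.

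The only step needing care is checking that Lemma \ref{lem:strip-H3} applies to the set $\{|a|\le t\}$, which is not defined in terms of $Z_f$ alone. Its proof only requires $A\subseteq\{|Z|\le z_0\}$ for an arbitrary measurable $A$, so this is fine.
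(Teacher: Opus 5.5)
You follow the paper's proof exactly: the split $\{|a|\le\beta|b|\}\subseteq\{|a|\le t\}\cup\{|b|\ge t/\beta\}$, Lemma \ref{lem:strip-H3} on the first piece, Lemma \ref{lem:H3-tail2} with $s=t/\beta$ on the second, then the small-ball bound. However, the one point you singled out and declared fine is a genuine gap, and the paper's one-line proof has the same gap.

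The proof of Lemma \ref{lem:strip-H3} uses $\int_A h^2\,\gamma_n(x)dx=\int_A\E[h^2\mid Z]\,\gamma_n(x)dx$. This pulls $1_A$ out of the conditional expectation, which is valid only when $A\in\sigma(Z)$. For a general measurable $A\subseteq\{|Z|\le z_0\}$ the lemma is false. Take $n=2$, $u=e_1$, $h(x)=H_3(x_2)/\sqrt6$ (so $h\in\mathcal H_3$, $\|h\|_2=1$) and $A=\{|x_1|\le z_0,\ |h(x)|\ge M\}$. Then $\int_A|h|\,\gamma_2(x)dx\ge M\gamma_2(A)$, which exceeds $\sqrt{C_{z_0}}\|h\|_2\gamma_2(A)$ once $M>\sqrt{C_{z_0}}$.

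Your set is $\{|a|\le t\}=\{f=1,\ |\alpha_fZ_f-\lambda|\le t\}\cup\{f=-1,\ |\alpha_fZ_f+\lambda|\le t\}$, which depends on $Y$ through $f$. Since $b=P_3f$ changes very little when $f$ is altered on a thin strip in $Z_f$, nothing prevents $f$ from placing this set where $|b|$ is large (large $|Y|$, if $b$ has a transverse component). So the first term of \eqref{eq:flip-correction-t}, with $\gamma_n(|a|\le t)$, is not justified by this argument.

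The repair is to enlarge to a $\sigma(Z_f)$-measurable set before invoking Lemma \ref{lem:strip-H3}. Let $A'\colonequals\{\big||Z_f|-\eta_f\big|\le t/\alpha_f\}\cap\{|Z_f|\le z_0\}$. The inclusion in Lemma \ref{lem:smallball-P1} together with your hypothesis on $z_0$ gives $\{|a|\le t\}\subseteq A'$. Hence $\int|b|1_{\{|a|\le t\}}\gamma_n(x)dx\le\int_{A'}|b|\,\gamma_n(x)dx\le\sqrt{C_{z_0}}\|b\|_2\,\gamma_n(A')$, now legitimately. The computation in the proof of Lemma \ref{lem:smallball-P1} is really a bound on $\gamma_n(A')$. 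Since $t<\lambda$ gives $t/\alpha_f<\eta_f$, one has $\gamma_n(A')\le 2(\Phi(\eta_f+t/\alpha_f)-\Phi(\eta_f-t/\alpha_f))\le 4\gamma_1(0)t/\alpha_{\min}$. Hence \eqref{eq:flip-correction-final} holds exactly as stated, while \eqref{eq:flip-correction-t} should be stated with $\gamma_n(A')$ in place of $\gamma_n(|a|\le t)$.

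The rest of your proposal is correct: the tail piece, the bound $\|b\|_2\le1$, and the observation that \eqref{eq:smallball-linear-f} holds for general $\alpha_{\min}$.
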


\begin{proof}
Split $\{|a|\le \beta|b|\}\subseteq \{|a|\le t\}\cup \{|b|\ge t/\beta\}$, hence
\[
\int_{\R^{n}} |b(x)|1_{\{|a|\le \beta|b|\}}\gamma_{n}(x)dx
\le
\int_{\R^{n}} |b(x)|1_{\{|a|\le t\}}\gamma_{n}(x)dx
+\int_{\R^{n}} |b(x)|1_{\{|b|\ge t/\beta\}}\gamma_{n}(x)dx.
\]
On $\{|a|\le t\}\subseteq \{|Z_f|\le z_0\}$ apply Lemmas~\ref{lem:strip-H3} and~\ref{lem:H3-tail2} to get \eqref{eq:flip-correction-t} with $s=t/\beta$.
To obtain \eqref{eq:flip-correction-final}, apply Lemma~\ref{lem:smallball-P1} to \eqref{eq:flip-correction-t}. 
\end{proof}



We now extend Lemma \ref{lemmaipbd} to a neighborhood of $\M$.

\begin{lemma}[Pairing stability near maximizers]\label{lem:pairing-stability}
Let $f\colon\R^n\to[-1,1]$ with $\mathrm{dist}_1(f,\mathcal M)\le \varepsilon$.
Pick $g\in\mathcal M$ with $\|f-g\|_1\le \varepsilon<1/100$ and define $h\colonequals f-g$.
Let $s_f\in\mathrm{sign}(R_\lambda f)$.
Then
\begin{equation}\label{eq:pairing-stability}
\langle s_f,P_3 f\rangle
\ \ge\
\kappa_0-3.87\epsilon[\log(2/\epsilon)]^{3/2}
-2^{3/2}[\epsilon L_{0}(\lambda+.5\log(2/\epsilon))]^{1/4}K_{0}.
\end{equation}
\end{lemma}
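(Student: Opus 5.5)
Our plan is to compare $\langle s_f,P_3f\rangle$ with $\langle s_g,P_3g\rangle\ge\kappa_0$, where $s_g\colonequals\sgn(R_\lambda g)$, through the decomposition
\[
\langle s_f,P_3 f\rangle=\langle s_g,P_3 g\rangle+\langle s_f,P_3 h\rangle+\langle s_f-s_g,P_3 g\rangle .
\]
The first term is at least $\kappa_0$ by \eqref{eq:kappa0}. The second term is bounded by $|\langle s_f,P_3h\rangle|\le\|P_3h\|_1$. Since $|h|\le 2$, we apply Lemma \ref{l1bd} to $h/2$, which has $\|h/2\|_1\le\epsilon/2<1/100$. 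Using that the map $x\mapsto x[\log(1/x)]^{3/2}$ is increasing for small $x$, this gives $\|P_3h\|_1\le 2(e/\sqrt3)^3(\epsilon/2)[\log(2/\epsilon)]^{3/2}$. Since $(e/\sqrt3)^3\approx 3.865$, this is at most $3.87\,\epsilon[\log(2/\epsilon)]^{3/2}$, which is the middle term of \eqref{eq:pairing-stability}.

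The main obstacle is the third term, $\langle s_f-s_g,P_3g\rangle$. By Cauchy--Schwarz,
\[
|\langle s_f-s_g,P_3g\rangle|\le\|s_f-s_g\|_2\,\|P_3g\|_2\le K_0\|s_f-s_g\|_2,
\]
and $\|s_f-s_g\|_2^2\le 4\gamma_n(\{s_f\ne s_g\})$. It therefore suffices to show that
\[
\gamma_n(\{s_f\neq s_g\})\le\Big(2\epsilon L_0\big(\lambda+.5\log(2/\epsilon)\big)\Big)^{1/2}
\]
up to the stated constants; then $\|s_f-s_g\|_2\le 2^{3/2}[\epsilon L_0(\cdots)]^{1/4}$ after absorbing factors. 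The signs of $R_\lambda f$ and $R_\lambda g$ can differ only where $|R_\lambda g|\le|R_\lambda h|$. For a cutoff $t$ we split this set as
\[
\{s_f\ne s_g\}\subseteq\{|R_\lambda g|\le t\}\cup\{|R_\lambda h|>t\}.
\]
For $t\le 1$, the first set has measure at most $L_0t$ by \eqref{eq:L0}. For the second set we use Markov's inequality, $\gamma_n(|R_\lambda h|>t)\le\|R_\lambda h\|_1/t$, together with
\[
\|R_\lambda h\|_1\le\|P_1h\|_1+\lambda\|h\|_1\le 2\big(\tfrac12(\epsilon/2)\log(2/\epsilon)\big)+\lambda\epsilon=\epsilon\big(\lambda+.5\log(2/\epsilon)\big),
\]
where the $P_1$ bound comes from Lemma \ref{l1bd} applied to $h/2$.

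This gives $\gamma_n(\{s_f\ne s_g\})\le L_0t+\epsilon(\lambda+.5\log(2/\epsilon))/t$. We optimize by taking $t=\sqrt{\epsilon(\lambda+.5\log(2/\epsilon))/L_0}$, which yields the bound $2\sqrt{\epsilon L_0(\lambda+.5\log(2/\epsilon))}$. We then check that $t\le1$ for the small $\epsilon$ considered; if $t>1$, the right side of \eqref{eq:pairing-stability} is already trivial, since $2^{3/2}[\cdots]^{1/4}K_0$ exceeds the possible range. Taking square roots and multiplying by $2K_0$ gives
\[
2K_0\big(2\sqrt{\epsilon L_0(\cdots)}\big)^{1/2}=2^{3/2}K_0[\epsilon L_0(\cdots)]^{1/4},
\]
which is exactly the last term of \eqref{eq:pairing-stability}.

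A subtlety is the null set where $R_\lambda f=0$, where $s_f$ may be chosen arbitrarily. This set is contained in $\{|R_\lambda g|\le|R_\lambda h|\}$, so it is already covered by the argument above. We also need $L_0$ to apply to $g$, which holds because $g\in\M$ by definition of $L_0$. With these points checked, the three estimates combine to \eqref{eq:pairing-stability}.
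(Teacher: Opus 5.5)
Your proposal is correct and is essentially the paper's proof. It uses the same decomposition $\langle s_g,P_3g\rangle+\langle s_f-s_g,P_3g\rangle+\langle s_f,P_3h\rangle$, and the same application of Lemma \ref{l1bd} to $h/2$, both for $\|P_3h\|_1$ and for the Markov bound on $|R_\lambda h|$. It also uses the same optimized split $\{|R_\lambda g|\le u\}\cup\{|R_\lambda h|\ge u\}$, which yields $\|s_f-s_g\|_2\le 2^{3/2}[\epsilon L_0(\lambda+.5\log(2/\epsilon))]^{1/4}$.

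You also insist that the optimal cutoff lie in $(0,1]$, where \eqref{eq:L0} applies, which the paper passes over. This does hold: $\epsilon(\lambda+.5\log(2/\epsilon))<.03$, while $L_0\ge\gamma_n(|R_\lambda g|\le 1)\ge\P(|Z|\le 1)>.6$, so your vaguer fallback for the case $t>1$ is unnecessary.
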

\begin{proof}
Fix $f$ with $\mathrm{dist}_1(f,\mathcal M)\le\varepsilon$ and pick $g\in\mathcal M$ with 
\begin{equation}\label{epsdef}
\|f-g\|_1\le\varepsilon.
\end{equation}
Write
\[
s_f\in \mathrm{sign}(R_\lambda f),
\qquad
s_g\in \mathrm{sign}(R_\lambda g),
\]
i.e.\ $s_f R_\lambda f=|R_\lambda f|$ a.e.\ and similarly for $s_g$.

\smallskip
\noindent\textbf{Step 1: sign stability $\|s_f-s_g\|_2\leq \widetilde{O}(\varepsilon^{1/4})$.}

Recall $h\colonequals f-g$. Then $R_\lambda f=R_\lambda g+R_\lambda h$.
If $\mathrm{sign}(a+b)\neq \mathrm{sign}(a)$ then $|a|\le |b|$, so using $a=R_{\lambda}g$ and $b=R_{\lambda}h$ and $s_{f}=\mathrm{sign}(a+b)$, for any $u>0$,
\[
\{s_f\neq s_g\}
\subseteq \{|R_\lambda g|\le |R_{\lambda} h|\}
\subseteq\{|R_{\lambda}g|\leq u\}\cup\{|R_{\lambda}h|\geq u\}.
\]
Therefore, for any $u>0$,
\begin{equation}\label{gbd}
\gamma_n(s_f\neq s_g)
\le \gamma_n(|R_\lambda g|\le u)+\gamma_n(|R_\lambda h|\ge u).
\end{equation}
By \eqref{eq:L0}, $\gamma_n(|R_\lambda g|\le u)\le L_0 u$.


By Lemma \ref{l1bd} for $h/2$, $0<\epsilon<1/100$, and then Markov's inequality and $\|h\|_{1}\leq\epsilon$,  
$$\|R_{\lambda}h\|_{1}\stackrel{\eqref{rdef}}{\leq}\lambda\|h\|_{1}+\|P_{1} h\|_{1}\leq\lambda\|h\|_{1}+\frac{1}{2}\|h\|_{1}\log(2/\|h\|_{1}).$$
%
%
\[
\gamma_n(\{|R_\lambda h|\ge u\})\le \frac{\|R_\lambda h\|_1}{u}\le \frac{\epsilon(\lambda+.5\log(2/\epsilon))}{u}.
\]
Minimizing $L_0u+c/u$ over $u>0$ in 
\eqref{gbd} gives (with $c=\epsilon(\lambda+.5\log(2/\epsilon))$)
%
%
%
%
%
%
%

\[
\gamma_n(\{s_f\neq s_g\})\stackrel{\eqref{gbd}}{\le} 2\sqrt{L_{0}\epsilon(\lambda+.5\log(2/\epsilon))}.
\]

Since $|s_f-s_g|=2$ on $\{s_f\neq s_g\}$ and $0$ otherwise,
\begin{equation}\label{sfineq}
\|s_f-s_g\|_2 = 2[\gamma_n(s_f\neq s_g)]^{1/2}
\leq 2^{3/2}[\epsilon L_{0}(\lambda+.5\log(2/\epsilon))]^{1/4}.
\end{equation}
\smallskip
\noindent\textbf{Step 2: pairing stability $\langle s_f,P_3 f\rangle \ge \kappa_0 - \widetilde{O}(\epsilon^{1/4})$}

Using $P_3 f=P_3 g+P_3 h$,
\begin{equation}\label{nine77}
\langle s_f,P_3 f\rangle
=
\langle s_g,P_3 g\rangle
+\langle s_f-s_g,P_3 g\rangle
+\langle s_f,P_3 h\rangle.
\end{equation}
The first term is $\ge \kappa_0$ by \eqref{eq:kappa0}.
For the second term, we apply Cauchy-Schwarz to get
%
\[
|\langle s_f-s_g,P_3 g\rangle|
\le \|s_f-s_g\|_{2}\,\|P_3 g\|_2
\stackrel{\eqref{eq:K0}\wedge\eqref{sfineq}}{\le} 
2^{3/2}[\epsilon L_{0}(\lambda+.5\log(2/\epsilon))]^{1/4}K_{0}.
\]

%
%


For the third term, Lemma \ref{l1bd} for $h/2$ says $$\|P_3 h\|_1 \leq(e/\sqrt{3})^{3}\|h\|_1[\log(2/\|h\|_{1})]^{3/2}\stackrel{\eqref{epsdef}}{\le} 3.87\epsilon[\log(2/\epsilon)]^{3/2},$$
so
\[
|\langle s_f,P_3 h\rangle|\le \|P_3 h\|_1\le 3.87\epsilon[\log(2/\epsilon)]^{3/2}.
\]
Hence, combining the above
\begin{equation}\label{eq:pairing-lower}
\langle s_f,P_3 f\rangle \ \stackrel{\eqref{nine77}}{\ge}\ \kappa_0-3.87\epsilon[\log(2/\epsilon)]^{3/2}
-2^{3/2}[\epsilon L_{0}(\lambda+.5\log(2/\epsilon))]^{1/4}K_{0}.
\end{equation}

\end{proof}




We can now turn Lemma \ref{lem:pairing-stability} into an operator norm upper bound.

\begin{prop}\label{prop:near-refined-fullP3}
Let $\varepsilon\in(0,1)$ and suppose $f\colon\R^n\to[-1,1]$ satisfies
$\mathrm{dist}_1(f,\mathcal M)\le \varepsilon$.
Assume moreover that $\alpha_f=\|P_1 f\|_2\ge \alpha_{\min}>0$.  Then for every $\beta\in(0,1)$ and every cutoff $t\in(0,\lambda)$ such that
$\{|R_\lambda f|\le t\}\subseteq\{|Z_f|\le z_0\}$ (for some $z_0$, such as $z_{0}>\eta_{f} + t/\alpha_{\rm min}$),
\begin{equation}\label{eq:near-refined-t}
\begin{aligned}
\|R_{\lambda,\beta}f\|_1
&\ \le\
\|R_\lambda\|_{\infty\to1}
-\beta\Big(\kappa_0-3.87\epsilon[\log(2/\epsilon)]^{3/2}
-2^{3/2}[\epsilon L_{0}(\lambda+.5\log(2/\epsilon))]^{1/4}K_{0}\Big)\\
&+K_{\mathrm{strip}}\beta t
+2\beta\exp\Big(-\frac12\big(\tfrac{t}{e\beta}\big)^{2/3}-\frac{1}{2}\Big),
\end{aligned}
\end{equation}
Here $K_{\mathrm{strip}}=\frac{8\sqrt{C_{z_0}}}{\alpha_{\min}\sqrt{2\pi}}$.  In particular, fix any exponent $\rho\in(0,1)$ and choose $t=\beta^\rho$. Then
\begin{equation}\label{eq:near-refined-rho}
\|R_{\lambda,\beta}f\|_1
\ \le\
\|R_\lambda\|_{\infty\to1}
-\beta\,\kappa_{\mathrm{eff}}(\varepsilon)
+K_{\mathrm{strip}}\,\beta^{1+\rho}
+2\beta\exp\Big(-\frac12e^{-2/3}\,\beta^{-\frac{2}{3}(1-\rho)}-\frac{1}{2}\Big),
\end{equation}
where
\[
\kappa_{\mathrm{eff}}(\varepsilon)\colonequals\kappa_0-3.87\epsilon[\log(2/\epsilon)]^{3/2}
-2^{3/2}[\epsilon L_{0}(\lambda+.5\log(2/\epsilon))]^{1/4}K_{0}.
\]

\end{prop}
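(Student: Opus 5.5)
Set $a\colonequals R_\lambda f$ and $b\colonequals P_3 f$, so that $R_{\lambda,\beta}f=a-\beta b$ by \eqref{rlbdef}. The plan is to apply the pointwise sign-flip inequality of Lemma \ref{lem:signflip3} and integrate against $\gamma_n$. Writing $s_f\in\mathrm{sign}(R_\lambda f)$, this gives
$$\|R_{\lambda,\beta}f\|_1\le \|R_\lambda f\|_1-\beta\langle s_f,P_3f\rangle+2\beta\int_{\R^n}|P_3f(x)|\,1_{\{|R_\lambda f|\le\beta|P_3f|\}}\gamma_n(x)dx.$$
Each of the three terms is then handled by an earlier result. First, $\|R_\lambda f\|_1\le\|R_\lambda\|_{\infty\to1}$ holds trivially since $|f|\le1$. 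Second, the pairing term is bounded below by Lemma \ref{lem:pairing-stability}, which gives $\langle s_f,P_3f\rangle\ge\kappa_{\mathrm{eff}}(\varepsilon)$ under the hypothesis $\mathrm{dist}_1(f,\M)\le\varepsilon$. Multiplying by $-\beta$ produces exactly the second term of \eqref{eq:near-refined-t}.

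Third, the correction term is bounded by \eqref{eq:flip-correction-final} of Lemma \ref{lem:flip-correction-t}. That lemma requires $\alpha_f\ge\alpha_{\min}$ and $\{|R_\lambda f|\le t\}\subseteq\{|Z_f|\le z_0\}$, both of which are assumed, together with $t/\beta>e$. It then yields $K_{\mathrm{strip}}\beta t+2\beta\exp(-\frac12(t/(e\beta))^{2/3}-\frac12)$. Adding the three bounds gives \eqref{eq:near-refined-t}.

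One subtlety is that Lemma \ref{lem:flip-correction-t} (through Lemma \ref{lem:smallball-P1}) is stated for $\{\pm1\}$-valued $f$, whereas here $f$ takes values in $[-1,1]$. I would handle this in one of two ways:
\begin{enumerate}
\item[(1)] Note that the small-ball inclusion only needs $|f|=1$ a.e.; if necessary, first reduce to $\{\pm1\}$-valued $f$. Since $f\mapsto\|R_{\lambda,\beta}f\|_1$ is convex, its supremum over the convex set is attained at extreme points, although the distance constraint complicates this reduction.
\item[(2)] Alternatively, note that Lemma \ref{lem:pairing-stability} is already stated for $[-1,1]$-valued $f$, and accept the stated hypotheses of Lemma \ref{lem:flip-correction-t} as implicitly covering $f$.
\end{enumerate}
I expect this point, together with the condition $t/\beta>e$ needed for Lemma \ref{lem:H3-tail2}, to be the main obstacle to a fully rigorous argument. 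The rest is bookkeeping. If $t/\beta\le e$, the tail bound can be replaced by the trivial bound $\|P_3f\|_1\le1$ from Lemma \ref{l1bd}, though for the regime of interest $t=\beta^\rho$ with small $\beta$ gives $t/\beta=\beta^{\rho-1}>e$ automatically.

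Finally, for \eqref{eq:near-refined-rho}, substitute $t=\beta^\rho$, which lies in $(0,\lambda)$ for $\beta$ small. Then $K_{\mathrm{strip}}\beta t=K_{\mathrm{strip}}\beta^{1+\rho}$ and $(t/(e\beta))^{2/3}=e^{-2/3}\beta^{-\frac23(1-\rho)}$, which gives the stated form.
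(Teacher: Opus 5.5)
Your argument is the paper's proof. You integrate Lemma~\ref{lem:signflip3} with $a=R_\lambda f$, $b=P_3f$, use $\|R_\lambda f\|_1\le\|R_\lambda\|_{\infty\to1}$, bound the pairing by Lemma~\ref{lem:pairing-stability} and the correction by \eqref{eq:flip-correction-final}, then set $t=\beta^\rho$. The caveats you raise are genuine, and the paper's proof does not address them either. However, your proposed remedies do not close them.

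\textbf{Option (2).} This is not legitimate. For $f$ that is not $\{\pm1\}$-valued, the inclusion in Lemma~\ref{lem:smallball-P1} and the small-ball bound \eqref{eq:smallball-linear-f} can fail even arbitrarily close to $\M$. For example, take $n=1$ and a suitable odd $g\in\M$. Replace $g$ on a symmetric set $E\subset(-\eta_*,\eta_*)$ with $\gamma_1(E)=\varepsilon/2$ by $f(z)=z/\eta_*$, choosing $E$ so that $\int_\R zf(z)\gamma_1(z)dz=\alpha_*$ still holds. Then $\|f-g\|_1\le\varepsilon$, yet $R_\lambda f=\alpha_*z-\lambda z/\eta_*=0$ on $E$. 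Hence $\gamma_1(|R_\lambda f|\le t)\ge\varepsilon/2$ for every $t>0$, and Lemma~\ref{lem:flip-correction-t} cannot be invoked.

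\textbf{Option (1).} This does not go through as stated. The $\varepsilon$-neighborhood of $\M$ is not convex: it contains $g$ and $-g$ but not $0$. So there is no extreme-point reduction within it.

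\textbf{The case $t/\beta\le e$.} The trivial bound $\|P_3f\|_1\le1$ gives a correction of $2\beta$. This exceeds $K_{\mathrm{strip}}\beta t+2\beta e^{-1/2}$ for small $t$, so it does not yield \eqref{eq:near-refined-t}.

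\textbf{The fix.} Add hypotheses to the statement:
\begin{itemize}
\item[$\bullet$] $f$ is $\{\pm1\}$-valued;
\item[$\bullet$] $t/\beta>e$;
\item[$\bullet$] $t<\lambda$, which you already note;
\item[$\bullet$] $\varepsilon<1/100$, which Lemma~\ref{lem:pairing-stability} requires but $\varepsilon\in(0,1)$ does not guarantee.
\end{itemize}
All of these hold where the proposition is actually used. Both \eqref{upperconstants} and the proof of Theorem~\ref{mainthm} take $f$ to be $\{\pm1\}$-valued, with $\varepsilon=10^{-7}$, $\rho=0.7$ and $\beta<10^{-10}$. Then $t/\beta=\beta^{-0.3}>e$ and $t<\lambda$.
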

Consequently, if $\kappa_{\mathrm{eff}}(\varepsilon)>0$, then there exists $\beta_0(\varepsilon,\rho)>0$
such that for all $0<\beta\le \beta_0(\varepsilon,\rho)$,
\[
\|R_{\lambda,\beta}f\|_1 \le \|R_\lambda\|_{\infty\to1}-\tfrac12\,\kappa_{\mathrm{eff}}(\varepsilon)\,\beta.
\]

Thus: \emph{for every fixed $\varepsilon$ with $\kappa_{\mathrm{eff}}(\varepsilon)>0$, the entire $\varepsilon$-neighborhood
of $\mathcal M$ experiences a uniform linear-in-$\beta$ drop for all sufficiently small $\beta$, with $\varepsilon$ independent of $\beta$.}
\begin{proof}
Apply Lemma~\ref{lem:signflip3} with $a=R_\lambda f$ and $b=P_3 f$, integrate, and use $\|R_\lambda f\|_1\le\|R_\lambda\|_{\infty\to1}$:
\[
\|R_{\lambda,\beta}f\|_1
\stackrel{\eqref{rlbdef}}{\leq}
\|R_\lambda\|_{\infty\to1}
-\beta\langle s_f,P_3 f\rangle
+2\beta\int_{\R^{n}} |P_3 f(x)|1_{\{|R_\lambda f|\le \beta|P_3 f|\}}\gamma_{n}(x)dx.
\]
Lower bound $\langle s_f,P_3 f\rangle$ by Lemma~\ref{lem:pairing-stability}.
Upper bound the last integral term by Lemma~\ref{lem:flip-correction-t}.
This yields \eqref{eq:near-refined-t}. The specialization \eqref{eq:near-refined-rho} follows by choosing $t=\beta^\rho$.
\end{proof}


\begin{remark}


Lemma \ref{mombd} says if $\epsilon<1/100$, we can take $\alpha_{\text{min}}=.6$, $\lambda=.19747\ldots$, $\eta_{f}=\lambda/\alpha_{f}<1/3$, $t=\beta^{\rho}$, $z_{0}\colonequals 1/3+ t/.6=1/3+\beta^{\rho}/.6$, $C_{z_{0}}\stackrel{\eqref{eq:Cz0}}{\leq}1.7$ (if $z_{0}<.36$), $K_{\text{strip}}=\frac{8\sqrt{C_{z_{0}}}}{\alpha_{\text{min}}\sqrt{2\pi}}=\frac{8}{.6\sqrt{\pi}}\sqrt{\frac{1.7}{2}}\leq7$.  From Lemma \ref{lemmaipbd}, we can take $\kappa_{0}\stackrel{\eqref{eq:kappa0}}{=}.0454$, $K_{0}\leq\sqrt{.0871+.04141}\leq.359$ (by Lemma \ref{lemmaipbd} modified to get an upper bound instead of a lower bound, i.e. $\|P_{3}g\|_{2}^{2}\leq(B+A)^{2}/6+p^{2}+s_{1}^{2}+t_{2}^{2}/2$ and $(B+A)^{2}\leq.7226^{2}$), $L_{0}\leq4/[\alpha_{min}\sqrt{2\pi}]\leq2.66$ by \eqref{eq:L0} and \eqref{eq:smallball-linear-f}.
Then we can take $$\kappa_{\mathrm{eff}}(\epsilon)=.0454 - 3.87\epsilon[\log(2/\epsilon)]^{3/2} -\epsilon^{1/4}2^{3/2}(2.66)^{1/4}(.19747+.5\log(2/\epsilon))^{1/4}(.359).$$

Choose $\epsilon=10^{-7}$, then $\kappa_{\mathrm{eff}}\geq.0454-.0396>.0058$, choose $\rho=.7$, and $\beta \leq 10^{-10}$.  Then $K_{\mathrm{strip}}\beta^{\rho}\leq10^{-6}$ and $2\exp(-.5-.5e^{-2/3}\beta^{-(2/3)(1-\rho)})<10^{-10}$, $z_{0}<.36$, so \eqref{eq:near-refined-rho} says
\begin{equation}\label{upperconstants}
\begin{aligned}
\|R_{\lambda,\beta}f\|_{1}
&\leq\|R_{\lambda}\|_{\infty\to1}
-\beta(.0058 - 10^{-6} - 10^{-10})\\
&\leq\|R_{\lambda}\|_{\infty\to1}
-\beta(.0057),\\
&\qquad\forall\,f\colon\R^{n}\to\{-1,1\}\text{ with }\inf_{g\in\M}\|f-g\|_{1}<\epsilon,\forall\,0<\beta<10^{-10}.
\end{aligned}
\end{equation}



\end{remark}

\begin{remark}
Theorem \ref{mainthm} with an ineffective constant follows immediately from \eqref{upperconstants}, since there must exist some $c(\epsilon)$ such that $\|R_{\lambda}f\|_{1}<\|R_{\lambda}\|_{\infty\to1} - c(\epsilon)$ for all $f\colon\R^{n}\to[-1,1]$ with $\inf_{g\in\M}\|f-g\|_{1}>\epsilon$, hence $\|R_{\lambda,\beta}f\|_{1}<\|R_{\lambda}\|_{\infty\to1} - c(\epsilon)+\beta$, for all such $f$ (using the final part of Lemma \ref{l1bd}).  That is, choosing $0<\beta<\min(c(\epsilon),10^{-10})$ (and using \eqref{upperconstants}) completes the proof of Theorem \ref{mainthm} (with an ineffective constant), i.e. $\|R_{\lambda,\beta}\|_{\infty\to1}<\|R_{\lambda}\|_{\infty\to1}$.  Since we would like an effective constant, we now proceed to find an explicit form for $c(\epsilon)$.
\end{remark}

\section{Lower Bounds for the Operator Norm}\label{seceight}

Having obtained a norm drop for $R_{\lambda,\beta}$ in \eqref{upperconstants} for a fixed $L_{1}$ neighborhood of $\M$, we now move on to the second step of the main theorem's proof, i.e. proving a norm drop for $\|R_{\lambda}f\|_{1}$ for all $f$ outside an $\epsilon$ neighborhood of $\M$.


We first recall notation from Section \ref{sectwo}.  Fix $0<\lambda_* <1$ and let $0<\eta_*<1$ solve
\[
\lambda_* = 2\eta_*\gamma_{1}(\eta_*).
\]
For any $\lambda,\alpha>0$, define
\begin{equation}\label{reedseq}
\eta\colonequals\frac{\lambda}{\alpha},
\qquad
\eta_* \colonequals \frac{\lambda_*}{\alpha_*}
\qquad \alpha_*= 2\gamma_{1}(\eta_*).
\end{equation}
(These are the usual Reeds/Davie parameters; numerically $\lambda_*\approx 0.197479091$, $\eta_*\approx 0.25573$,
$\alpha_*\approx 0.7722165$.)  Let $f\colon\R^n\to[-1,1]$ satisfy $\|P_1 f\|_2=\alpha$. Let $u\in S^{n-1}$ be such that
\[
(P_1 f)(x)=\alpha\langle u,x\rangle,\qquad\forall\,x\in\R^{n},
\]
and define $Z\colonequals\langle u,X\rangle\sim N(0,1)$ for $X\sim\gamma_n$.
Define the conditional profiles
\begin{equation}\label{thetadef}
\theta(z)\colonequals\E[f(X)\mid Z=z]\in[-1,1],
\qquad
\nu(z)\colonequals\E[f(X)^2\mid Z=z]\in[0,1].
\end{equation}
Assume $\theta$ is odd (we will show later there is no loss of generality in assuming this).  For any $\alpha,\lambda>0,z\in\R$, define

\begin{equation}\label{abdefs}
A(z)=A_{\alpha}(z)\colonequals\frac{|\alpha z-\lambda|+|\alpha z+\lambda|}{2},
\qquad
B(z)=B_{\alpha}(z)\colonequals\frac{|\alpha z-\lambda|-|\alpha z+\lambda|}{2}.
\end{equation}
For $z>0$, one has the explicit form
\begin{equation}\label{eq:B-piecewise}
B(z)=
\begin{cases}
-\alpha z,& 0<z<\eta,\\
-\lambda,& z>\eta.
\end{cases}
\end{equation}

Define, for $\xi:\R\to[-1,1]$ satisfying the moment constraint $\int_{\R} z\xi(z)\gamma_{1}(z)\,dz=\alpha$,
\begin{equation}\label{vdef}
V_{\alpha,\lambda}(\xi)\colonequals\int_{\R}\gamma_{1}(z)\big(A_{\alpha}(z)+\xi(z)B_{\alpha}(z)\big)\,dz.
\end{equation}

Let
\begin{equation}\label{fstdef}
\begin{aligned}
F_{\alpha,\lambda}&\colonequals\sup\{V_{\alpha,\lambda}(\xi):\quad\xi\colon\R\to[-1,1], \ |\xi|\le 1,\ \int_{\R} z\xi(z)\gamma_{1}(z)dz=\alpha\}.\\
F_{*}&\colonequals\sup_{\alpha,\lambda>0}F_{\alpha,\lambda}.
\end{aligned}
\end{equation}

\begin{lemma}[Dual certificate gap formula]\label{lem:dual-gap}
For all $0<\alpha<1$ with $|\alpha-\alpha_*|<1/100$, for every feasible $\xi$ (i.e.\ $|\xi|\le 1$ and $\int_{\R} z\xi(z)\gamma_{1}(z)\,dz=\alpha$),
one has
\begin{equation}\label{eq:gap-formula}
F_{\alpha,\lambda}-V_{\alpha,\lambda}(\xi)
=
\int_{|z|>\eta}(\alpha |z|-\lambda)\,(1-\xi(z)\mathrm{sign}(z))\,\gamma_{1}(z)\,dz.
\end{equation}
\end{lemma}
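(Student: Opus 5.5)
The plan is to prove \eqref{eq:gap-formula} by a Lagrange-multiplier (dual certificate) rewriting of $V_{\alpha,\lambda}$. The multiplier $\alpha$ for the moment constraint makes the dependence on $\xi$ visibly one-signed. Then we check that the resulting upper bound is attained by a feasible $\xi$ when $\alpha$ is close to $\alpha_*$.

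\textbf{Step 1 (rewriting $V$).} For any feasible $\xi$, the constraint $\int_{\R} z\xi(z)\gamma_1(z)dz=\alpha$ lets us add and subtract $\alpha\int z\xi\gamma_1$:
\[
V_{\alpha,\lambda}(\xi)=\int_{\R}\gamma_1A_\alpha\,dz-\alpha^2+\int_{\R}\xi(z)\big(B_\alpha(z)+\alpha z\big)\gamma_1(z)\,dz .
\]
By \eqref{eq:B-piecewise} and oddness of $B_\alpha$, the function $B_\alpha(z)+\alpha z$ vanishes for $|z|<\eta$. For $|z|>\eta$ it equals $\mathrm{sign}(z)(\alpha|z|-\lambda)$. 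Hence
\[
V_{\alpha,\lambda}(\xi)=\int_{\R}\gamma_1A_\alpha\,dz-\alpha^2+\int_{|z|>\eta}(\alpha|z|-\lambda)\,\xi(z)\mathrm{sign}(z)\,\gamma_1(z)\,dz .
\]
Since $\alpha|z|-\lambda>0$ on $\{|z|>\eta\}$ and $|\xi|\le1$, every feasible $\xi$ satisfies $V_{\alpha,\lambda}(\xi)\le U$, where
\[
U\colonequals\int_{\R}\gamma_1A_\alpha\,dz-\alpha^2+\int_{|z|>\eta}(\alpha|z|-\lambda)\gamma_1(z)\,dz .
\]
Moreover $U-V_{\alpha,\lambda}(\xi)$ is exactly the right side of \eqref{eq:gap-formula}. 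So it remains to show $F_{\alpha,\lambda}=U$, i.e.\ that the bound $U$ is attained.

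\textbf{Step 2 (attainment).} We need a feasible $\xi$ with $\xi=\mathrm{sign}(z)$ on $\{|z|>\eta\}$. By the computation in \eqref{alphadef2}, the moment constraint for such a $\xi$ reads
\[
\int_{-\eta}^{\eta}\xi(z)z\gamma_1(z)\,dz=\alpha-2\gamma_1(\eta).
\]
The left side ranges, as $\xi$ varies over functions $[-\eta,\eta]\to[-1,1]$, over the whole interval $[-s,s]$ with $s=2(\gamma_1(0)-\gamma_1(\eta))$; this is continuous interpolation between $\xi=-\mathrm{sign}$ and $\xi=\mathrm{sign}$. So it suffices to show $|\alpha-2\gamma_1(\lambda/\alpha)|\le 2(\gamma_1(0)-\gamma_1(\lambda/\alpha))$ whenever $|\alpha-\alpha_*|<1/100$.

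\textbf{Step 3 (the numerical check, the main obstacle).} Let $G(\alpha)\colonequals\alpha-2\gamma_1(\lambda/\alpha)$. Then $G(\alpha_*)=0$ by \eqref{reedseq}. Using $\gamma_1'(\eta)=-\eta\gamma_1(\eta)$,
\[
G'(\alpha)=1-2\eta^2\gamma_1(\eta)/\alpha ,
\]
which lies in roughly $[0.95,1]$ on the given range, since $\eta\approx0.256$. Hence $|G(\alpha)|\le|\alpha-\alpha_*|<0.01$. On the other hand, $\eta=\lambda/\alpha$ stays within about $0.004$ of $\eta_*$. So $2(\gamma_1(0)-\gamma_1(\eta))$ stays near $s_1\approx0.0257$ from Lemma~\ref{lemmaipbd}, and in particular exceeds $0.02$. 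This gives the required inequality with room to spare.

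With $F_{\alpha,\lambda}=U$ established, \eqref{eq:gap-formula} follows from Step 1. The only delicate point is making the constants in Step 3 rigorous, i.e.\ explicit bounds on $G'$ and on $\gamma_1(0)-\gamma_1(\eta)$ over the interval; these are elementary monotonicity estimates.
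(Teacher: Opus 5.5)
Your proof is correct and follows the paper's argument. The multiplier $\mu=-\alpha$ produces the slack $B_\alpha(z)+\alpha z$, which vanishes on $|z|<\eta$, and the duality gap is closed by a feasible $\xi$ equal to $\mathrm{sign}(z)$ off $[-\eta,\eta]$. Your condition $|\alpha-2\gamma_{1}(\eta)|\le 2(\gamma_{1}(0)-\gamma_{1}(\eta))$ is the same as the paper's check that $\alpha$ lies between $4\gamma_{1}(\eta)-\sqrt{2/\pi}$ and $\sqrt{2/\pi}$; you verify it by a mean-value bound on $G$, where the paper uses direct endpoint estimates. One small numerical slip: $G'(\alpha_*)=1-\eta_*^2\approx 0.935$, so $G'$ does not lie in $[0.95,1]$. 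This does not affect the conclusion, since only $0\le G'\le 1$ is needed.
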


\begin{proof}
\emph{Step 1: Weak duality.}
For $\mu\in\R$, define the dual functional
\begin{equation}\label{ddef}
D_{\alpha}(\mu)\colonequals\int_\R A(z)\gamma_{1}(z)dz\;+\;\mu\alpha\;+\;\int_\R |B(z)-\mu z|\gamma_{1}(z)dz.
\end{equation}
For any feasible $\xi$ ($|\xi|\le 1$, $\int_{\R} z\xi(z)\gamma_{1}(z)dz=\alpha$), we have
\begin{equation}\label{vdef2}
\begin{aligned}
V_{\alpha,\lambda}(\xi)
&\stackrel{\eqref{vdef}}{=}\int_{\R} A(z)\gamma_{1}(z)dz  + \int_{\R} \gamma_{1}(z)\xi(z) B(z)dz\\
&=\int_{\R} A(z)\gamma_{1}(z)dz + \mu\alpha + \int_{\R} \xi(z)(B(z)-\mu z)\gamma_{1}(z)dz,
\end{aligned}
\end{equation}
where we used the moment constraint to replace $\int_{\R} \xi(z)\mu z\gamma_{1}(z)dz$ by $\mu\alpha$.
Since $|\xi|\le 1$, pointwise $\xi(B-\mu z)\le |B-\mu z|$, hence $V_{\alpha,\lambda}(\xi)\le D_{\alpha}(\mu)$.
Taking the infimum over $\mu$ gives $V_{\alpha,\lambda}(\xi)\le \inf_{\mu\in\R} D_{\alpha}(\mu)$ and therefore
\begin{equation}\label{fsineq}
F_{\alpha,\lambda}\stackrel{\eqref{fstdef}}{=}\sup_{\substack{\xi\colon\R\to[-1,1]\\ \int_{\R}z\xi(z)\gamma_{1}(z)dz=\alpha}}V_{\alpha,\lambda}(\xi)\le \inf_{\mu\in\R}D_{\alpha}(\mu).
\end{equation}

\emph{Step 2: The Reeds dual optimizer $\mu=-\alpha$.}
Set $\mu\colonequals-\alpha$ and define the slack
\begin{equation}\label{sdefcap}
S(z)=S_{\alpha}(z)\colonequals B(z)-\mu z = B(z)+\alpha z,\qquad\forall\,z\in\R.
\end{equation}
Using \eqref{eq:B-piecewise}, for $z>0$ we have $S$ is odd and
\[
S(z)=
\begin{cases}
0,& 0<z<\eta,\\
\alpha z-\lambda=\alpha(z-\eta),& z>\eta.
\end{cases}
\]
By oddness, $S(z)<0$ for $z<-\eta$ and $S(z)=0$ for $|z|<\eta$.

\emph{Step 3: A primal point attaining $D_{\alpha}(\mu)$.}
Let $\xi\colon\R\to[-1,1]$ satisfy $\int_{\R}z\xi(z)\gamma_{1}(z)dz=\alpha$, and $\xi(z)=\mathrm{sign}(z)$ for all $|z|>\eta$.  (Since $|\alpha-\alpha_*|<1/100$, such a $\xi$ exists by e.g. defining $\xi(z)$ to be $a\cdot\mathrm{sign}(z)$ for all $|z|<\eta$ for some $a\in[-1,1]$ and $\xi(z)=\mathrm{sign}(z)$ for all $|z|>\eta$, so that $\int_{\R}z\xi(z)\gamma_{1}(z)dz=2\gamma_{1}(\eta)+2a(\gamma_{1}(0)-\gamma_{1}(\eta))$.  When $a=0$ and $\alpha=\alpha_*$, this integral is $2\gamma_{1}(\eta_*)=\alpha_*$.  Choosing $a=-1$ or $a=1$ shows that any intermediate value in $[\alpha_*-1/100,\alpha_*+1/100]$ can be achieved, using also e.g. $|\eta-\eta_*|<1/100$ by \eqref{etastin} below; $a=1$ gives $\sqrt{2/\pi}>\alpha_*+.01$; when $a=-1$, $\eta\geq\eta_*-.01\geq.24$ and the integral is $\leq2\gamma_{1}(.24)-2(\gamma_{1}(0)-\gamma_{1}(.24))\leq.753<\alpha_*-.01$.)
Then $\xi(z)=\mathrm{sign}(S(z))$ for all $|z|>\eta$, and $S(z)=0$ for all $|z|<\eta$, so $\xi(z)S(z)=|S(z)|$ for all $z\in\R$, and
\begin{flalign*}
V_{\alpha,\lambda}(\xi)
&\stackrel{\eqref{vdef2}\wedge\eqref{sdefcap}}{=}\int_{\R}A(z)\gamma_{1}(z)dz + \mu\alpha + \int_{\R} \xi(z) S(z)\gamma_{1}(z)dz\\
&=\int_{\R}A(z)\gamma_{1}(z)dz + \mu\alpha + \int_{\R} |S(z)|\gamma_{1}(z)dz
\stackrel{\eqref{ddef}\wedge\eqref{sdefcap}}{=} D_{\alpha}(\mu).
\end{flalign*}
Hence $F_{\alpha,\lambda}\stackrel{\eqref{fstdef}}{\ge} V_{\alpha,\lambda}(\xi)=D_{\alpha}(\mu)$. Combined with \eqref{fsineq} ($F_{\alpha,\lambda}\le \inf_{\mu\in\R} D_{\alpha}(\mu)$),
we get
$$F_{\alpha,\lambda}=D_{\alpha}(\mu).$$

\emph{Step 4: Gap identity.}
For any feasible $\xi$,
\[
F_{\alpha,\lambda}-V_{\alpha,\lambda}(\xi)=D_{\alpha}(\mu)-V_{\alpha,\lambda}(\xi)
\stackrel{\eqref{ddef}\wedge\eqref{vdef2}\wedge\eqref{sdefcap}}{=}\int_{\R} \Big(|S(z)|-\xi(z)S(z)\Big)\gamma_{1}(z)dz.
\]
On $|z|<\eta$, $S(z)=0$ so the integrand vanishes.
On $z>\eta$, $S(z)=\alpha z-\lambda>0$, so $|S(z)|-\xi S(z)=(1-\xi(z))(\alpha z-\lambda)$.  Similarly, on $z<-\eta$, $S(z)=\lambda-\alpha|z|$, $|S(z)|=\alpha|z|-\lambda$, so $|S(z)|-\xi(z)S(z)=(\alpha|z|-\lambda)(1-\mathrm{sign}(z)\xi(z))$.
\end{proof}

\section{Lower Bounds and L1 Distance}\label{secnine}

In the previous section, we showed that when $|\theta|\neq1$ for $|z|>\eta$, $\theta=\theta_{f}$ does not optimize $\|R_{\lambda}f\|_{1}$.  In this section, we upgrade this statement to: if $\theta$ is far from the optimal set $\Theta$ in the $L_{1}$ norm, then $\theta$ does not optimize $\|R_{\lambda}f\|_{1}$, in Lemmas \ref{tailv2} and \ref{lem:Delta-large-gap}.  

\begin{equation}\label{deldef}
\Delta=\Delta_{\eta_{*}}\colonequals\left|\int_{-\eta_{*}}^{\eta_{*}} z\,\theta(z)\gamma_{1}(z)\,dz\right|.
\end{equation}

$$
d\colonequals\inf_{\xi\in\Theta}\|\theta-\xi\|_{1}.
$$


\begin{lemma}\label{lower1}
If $\Delta\leq\eta_{*} d/4$ and if $\theta\colon\R\to[-1,1]$, then

\begin{equation}\label{eq:tail-defect}
\int_{|z|>\eta_{*}}\bigl(1-\theta(z)\mathrm{sign}(z)\bigr)\gamma_{1}(z)\,dz \ \ge\ \frac{1}{2}\,d.
\end{equation}
\end{lemma}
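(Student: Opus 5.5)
The plan is to build an explicit element $\xi\in\Theta$ close to $\theta$ and use $d\le\|\theta-\xi\|_{1}$. Write
$$T\colonequals\int_{|z|>\eta_{*}}\bigl(1-\theta(z)\mathrm{sign}(z)\bigr)\gamma_{1}(z)\,dz .$$
It suffices to construct $\xi\in\Theta$ with
$$\|\theta-\xi\|_{1}\le T+\frac{2\Delta}{\eta_*}.$$
Then the hypothesis $\Delta\le\eta_* d/4$ gives $d\le T+d/2$, i.e.\ $T\ge d/2$, which is \eqref{eq:tail-defect}.

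\textbf{Step 1 (fix the tail).} Set $\xi_{0}(z)\colonequals\mathrm{sign}(z)$ for $|z|>\eta_*$ and $\xi_0\colonequals\theta$ on $[-\eta_*,\eta_*]$. Since $|\theta|\le1$, we have $|\theta-\mathrm{sign}|=1-\theta\,\mathrm{sign}$ pointwise, so $\|\theta-\xi_0\|_1=T$ exactly. By Lemma \ref{maxchar}, $\xi_0$ satisfies the first condition defining $\Theta$. It may fail the second condition, because its inner moment $M\colonequals\int_{-\eta_*}^{\eta_*}z\theta(z)\gamma_1(z)\,dz$ satisfies $|M|=\Delta$ by \eqref{deldef}.

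\textbf{Step 2 (kill the inner moment cheaply).} Say $M>0$; the case $M<0$ is symmetric, with the roles of $\pm\mathrm{sign}$ swapped. We modify $\xi_0$ only on the band $J\colonequals\{\eta_*/2\le|z|\le\eta_*\}$. For $t\in[0,1]$ set
$$\xi_t\colonequals\xi_0-t\bigl(1+\xi_0\,\mathrm{sign}(z)\bigr)\mathrm{sign}(z)\,1_J,$$
which interpolates between $\xi_0$ and $-\mathrm{sign}(z)$ on $J$. Each $\xi_t$ takes values in $[-1,1]$.

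The inner moment $m(t)$ of $\xi_t$ is affine in $t$, with $m(0)=M>0$. At $t=1$ we have $\xi_1=-\mathrm{sign}$ on $J$, and $|\xi_1|\le 1$ on the core $\{|z|<\eta_*/2\}$, so
$$m(1)\le 2\bigl(\gamma_1(0)-\gamma_1(\eta_*/2)\bigr)-2\bigl(\gamma_1(\eta_*/2)-\gamma_1(\eta_*)\bigr).$$
This is negative: for $\eta_*\approx0.2557$ the two brackets behave like $\eta_*^2/8$ and $3\eta_*^2/8$ times $\gamma_1(0)$. A quick numerical check confirms it, and this numerical check is the only point where the specific value of $\eta_*$ enters.

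By the intermediate value theorem there is $t_0\in(0,1]$ with $m(t_0)=0$, so $\xi\colonequals\xi_{t_0}\in\Theta$. On $J$ we have $|z|\ge\eta_*/2$, and the change $\xi_0-\xi$ has the sign of $\mathrm{sign}(z)$ pointwise. Hence
$$M=\int_J z(\xi_0-\xi)\gamma_1\ \ge\ \frac{\eta_*}{2}\int_J|\xi_0-\xi|\gamma_1 ,$$
that is, $\|\xi_0-\xi\|_1\le 2\Delta/\eta_*$.

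\textbf{Step 3 (conclude).} By the triangle inequality, $d\le\|\theta-\xi\|_1\le T+2\Delta/\eta_*\le T+d/2$, which gives the claim.

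The main obstacle is Step 2: making sure the correction has enough room. Pushing mass only at $|z|\ge\eta_*/2$ keeps the L1 cost per unit of moment at most $2/\eta_*$. The risk is that $\theta$ might already equal $-\mathrm{sign}$ on $J$ while carrying positive moment from the core. The inequality $m(1)<0$ (band moment exceeds core moment) is what excludes this. If it were tight, I would shift the cutoff $\eta_*/2$ slightly, at the cost of a worse constant than $1/4$.
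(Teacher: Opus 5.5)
Your proposal is correct and takes essentially the same route as the paper. You fix the tail to $\mathrm{sign}(z)$ at cost exactly $T$, then cancel the inner moment by pushing $\theta$ toward $-\mathrm{sign}(z)$ (or $+\mathrm{sign}(z)$) only on the band $\eta_*/2\le|z|\le\eta_*$, using that the band's first moment dominates the core's (your inequality $m(1)\le 0$ is exactly the paper's capacity claim \eqref{eq:capacity}) and that $|z|\ge\eta_*/2$ there to bound the cost by $2\Delta/\eta_*$. The only differences are cosmetic: the paper applies the intermediate value theorem to the outer endpoint $t_0$ of a sub-band $\{\eta_*/2<|z|<t_0\}$ on which $\theta$ is replaced outright, whereas you apply it to a uniform blending weight on the whole band; and $m(1)\le 0$ needs no numerics, since $\gamma_1$ is concave on $[0,1]$ and hence $\gamma_1(0)-\gamma_1(\eta_*/2)\le\gamma_1(\eta_*/2)-\gamma_1(\eta_*)$.
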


\begin{proof}
We explicitly construct $\xi\in\Theta$ and bound $\|\theta-\xi\|_1$.

\textbf{Step 1: Fix the tail to match $\Theta$.}
Define $\overline{\theta}\colon\R\to\R$ by setting $\overline{\theta}(z)=\mathrm{sign}(z)$ for $|z|>\eta_{*}$ and $\overline{\theta}(z)=\theta(z)$ for $|z|\le \eta_{*}$.
Then $|\overline{\theta}|\le 1$, and $\overline{\theta}(z)=1$ for all $z>\eta_{*}$.
Moreover, on the tail $|z|>\eta_{*}$ we have $\overline{\theta}(z)-\theta(z)=\mathrm{sign}(z)-\theta(z)$, so
\begin{equation}\label{eq:tail-L2}
\|\theta-\overline{\theta}\|_1
=
\int_{|z|>\eta_{*}}(1-\theta(z)\mathrm{sign}(z))\gamma_{1}(z)\,dz.
\end{equation}

\textbf{Step 2: Correct the inner moment by modifying on a set $S\subset[\eta_{*}/2,\eta_{*}]$.}
Let
\begin{equation}\label{del0def}
\Delta'\colonequals\int_0^{\eta_{*}} z\,\overline{\theta}(z)\gamma_{1}(z)\,dz=\int_0^{\eta_{*}} z\,\theta(z)\gamma_{1}(z)\,dz,
\qquad \Delta=|\Delta'|.
\end{equation}
We want to change $\overline{\theta}$ on $(-\eta_{*},\eta_{*})$ so that the new function $\xi$ satisfies
$\int_{-\eta_{*}}^{\eta_{*}} z\,\xi(z)\gamma_{1}(z)\,dz=0$.
Let $s\colonequals\mathrm{sign}(\Delta')\in\{-1,+1\}$ (with $s=+1$ if $\Delta'=0$).

Consider the nonnegative integrable function on $\eta_{*}/2<|z|<\eta_{*}$
\[
g(z)\colonequals z\bigl(\mathrm{sign}(z)+s\,\theta(z)\bigr)\gamma_{1}(z)\ \ge\ 0.
\]
We claim
\begin{equation}\label{eq:capacity}
\int_{\eta_{*}/2<|z|<\eta_{*}} g(z)\,dz \ \ge\ \Delta.
\end{equation}
Assuming this claim for a moment, define $G(t)\colonequals\int_{\eta_{*}/2<|z|<t} g(z)\,dz$ for $t\in[\eta_{*}/2,\eta_{*}]$.
Then $G$ is continuous, $G(\eta_{*}/2)=0$, and $G(\eta_{*})\ge \Delta$ by \eqref{eq:capacity}, so by the intermediate value theorem there exists
$t_0\in[\eta_{*}/2,\eta_{*}]$ such that $G(t_0)=\Delta$.
Set $S\colonequals\{z\in\R\colon \eta_{*}/2<|z|<t_{0}\}$; then
\begin{equation}\label{eq:choose-S}
\int_S z(\mathrm{sign}(z)+s\theta(z))\gamma_{1}(z)\,dz = \Delta.
\end{equation}

Now define $\xi$ on $(-\eta_{*},\eta_{*})$ by
\begin{equation}\label{xidef}
\xi(z)\colonequals
\begin{cases}
-s\cdot\mathrm{sign}(z),& z\in S,\\
\theta(z),& z\in (-\eta_*,\eta_{*})\setminus S,
\end{cases}
\end{equation}
and on the tails set $\xi(z)=\mathrm{sign}(z)$ for $|z|>\eta_{*}$.
Then $|\xi|\le 1$, and $\xi(z)=1$ for $z>\eta_{*}$.

Finally, we show $\int_{-\eta_{*}}^{\eta_{*}}\xi(z)z\gamma_{1}(z)dz=0$:
if $\Delta'>0$ then $s=+1$ and on $S$ we changed $\theta$ to $-1$, so the inner moment $\Delta'$ decreases by
$\int_S z(\theta(z)+\mathrm{sign}(z))\gamma_{1}(z)\,dz=\Delta$ (by \eqref{eq:choose-S}), hence $\int_{-\eta_{*}}^{\eta_{*}}\xi(z)z\gamma_{1}(z)dz=0$.
If $\Delta'<0$ then $s=-1$ and on $S$ we changed $\theta$ to $+1$, increasing the inner moment by
$\int_S z(\mathrm{sign}(z)-\theta(z))\gamma_{1}(z)\,dz=\Delta$, again making it $0$.
Thus $\xi\in\Theta$.

\textbf{Step 3: Bound the $L_{1}$ cost of the inner correction.}
On $S$ we have
\[
|\theta(z)-\xi(z)| \stackrel{\eqref{xidef}}{=} |\theta(z)-(-s)\mathrm{sign}(z)| =|s(\theta(z)+s\cdot\mathrm{sign}(z))|= 1+s\theta(z)\mathrm{sign}(z)\in[0,2],
\]
so
\begin{equation}\label{eq:inner-cost-start}
\int_S |\theta(z)-\xi(z)|\gamma_{1}(z)dz
=
\int_S (1+s\theta(z)\mathrm{sign}(z))\gamma_{1}(z)dz
\end{equation}

Now use that $|z|\ge \eta_{*}/2$ on $S\subset\{z\in\R\colon \eta_{*}/2<|z|<\eta_{*}\}$ and $1+s\theta\cdot\mathrm{sign}(z)\ge 0$ on $S$:
\[
\Delta\stackrel{\eqref{eq:choose-S}}{=}\int_S z(\mathrm{sign}(z)+s\theta(z))\gamma_{1}(z)dz \ \ge\ \frac{\eta_{*}}{2}\int_S (1+s\theta(z)\mathrm{sign}(z))\gamma_{1}(z)dz,
\]
Insert this into \eqref{eq:inner-cost-start} to get
\[
\int_S |\theta(z)-\xi(z)|\gamma_{1}(z)dz \le  \frac{2\Delta}{\eta_{*}}.
\]
Combining this with \eqref{xidef} gives
\begin{equation}\label{eq:inner-cost}
\int_{|z|<\eta_{*}}|\overline{\theta}(z)-\xi(z)|\gamma_{1}(z)dz
=
\int_{|z|<\eta_{*}}|\theta(z)-\xi(z)|\gamma_{1}(z)dz \le \frac{2\Delta}{\eta_{*}}.
\end{equation}

\textbf{Step 4: Combine tail + inner costs.}
Using $\|\theta-\xi\|_1\le \|\theta-\overline{\theta}\|_1 + \|\overline{\theta}-\xi\|_1$
(or directly summing disjoint supports: tail vs. inner strip), combining \eqref{eq:tail-L2} and \eqref{eq:inner-cost}
gives 

\begin{equation}\label{eq:dist-upper}
\|\theta-\xi\|_1
\ \le\
\int_{|z|>\eta_{*}}|1-\theta(z)\mathrm{sign}(z)|\gamma_{1}(z)\,dz
\;+\;\frac{2}{\eta_{*}}\,\Delta,
\end{equation}

\begin{equation}\label{eq:d2-upper}
d
\ \le\
\int_{|z|>\eta_{*}}|1-\theta(z)\mathrm{sign}(z)|\gamma_{1}(z)\,dz
\;+\;\frac{2}{\eta_{*}}\,\Delta,
\end{equation}

\textbf{Step 5: One-line consequence: $\Delta$ small forces tail defect.}
If $\Delta\le \frac{\eta_{*}}{4}d$, then \eqref{eq:d2-upper} implies
\[
\int_{|z|>\eta_{*}} |1-\theta(z)\mathrm{sign}(z)|\gamma_{1}(z)dz \ \ge\ d - \frac{2}{\eta_{*}}\Delta \ \ge\ \frac{d}{2},
\]
which is \eqref{eq:tail-defect}.

\emph{Proof of the capacity claim \eqref{eq:capacity}.}
It remains to prove $\int_{\eta_{*}/2<|z|<\eta_{*}} z(\mathrm{sign}(z)+s\theta(z))\gamma_{1}(z)dz \ge |\Delta'|$.
Assume first $\Delta'\ge 0$, so $s=+1$ and $\Delta=\Delta'$.
Then
\begin{flalign*}
\int_{\eta_{*}/2<|z|<\eta_{*}} z(\mathrm{sign}(z)+\theta(z))\gamma_{1}(z)dz
&=
\int_{\eta_{*}/2<|z|<\eta_{*}} z\theta(z)\gamma_{1}(z)dz + \int_{\eta_{*}/2<|z|<\eta_{*}} |z|\gamma_{1}(z)dz\\
&\stackrel{\eqref{del0def}}{=}
\Delta' - \int_{-\eta_{*}/2}^{\eta_{*}/2} z\theta(z)\gamma_{1}(z)dz + \int_{\eta_{*}/2<|z|<\eta_{*}} |z|\gamma_{1}(z)dz.
\end{flalign*}
Since $\theta\le 1$, we have $\int_{-\eta_{*}/2}^{\eta_{*}/2} z\theta(z)\gamma_{1}(z)dz \le \int_{-\eta_{*}/2}^{\eta_{*}/2} |z|\gamma_{1}(z)dz$.
Taking antiderivatives shows
$\int_{\eta_{*}/2<|z|<\eta_{*}} |z|\gamma_{1}(z)dz \ge \int_{-\eta_{*}/2}^{\eta_{*}/2} |z|\gamma_{1}(z)dz$.
Therefore the last two terms satisfy
\[
- \int_{-\eta_{*}/2}^{\eta_{*}/2} z\theta(z)\gamma_{1}(z)dz + \int_{\eta_{*}/2<|z|<\eta_{*}} |z|\gamma_{1}(z)dz\ge\ 0,
\]
and we conclude $\int_{\eta_{*}/2<|z|<\eta_{*}} z(\mathrm{sign}(z)+\theta(z))\gamma_{1}(z)dz \ge \Delta'$.

If $\Delta'<0$, then $s=-1$ and $\Delta=-\Delta'$.
The same argument (using $\theta\ge -1$) gives
\begin{flalign*}
&\int_{\eta_{*}/2<|z|<\eta_{*}} z(\mathrm{sign}(z)-\theta(z))\gamma_{1}(z)dz
=
\int_{\eta_{*}/2<|z|<\eta_{*}} |z|\gamma_{1}(z)dz - \int_{\eta_{*}/2<|z|<\eta_{*}} z\theta(z)\gamma_{1}(z)dz\\
&\qquad\qquad\stackrel{\eqref{del0def}}{=}-\Delta'
+\int_{\eta_{*}/2<|z|<\eta_{*}} |z|\gamma_{1}(z)dz
+\int_{-\eta_{*}/2}^{\eta_{*}/2} z\theta(z)\gamma_{1}(z)dz \\
&\qquad\qquad\geq-\Delta'
+\int_{\eta_{*}/2<|z|<\eta_{*}} |z|\gamma_{1}(z)dz
-\int_{-\eta_{*}/2}^{\eta_{*}/2} |z|\gamma_{1}(z)dz 
\ge -\Delta'\stackrel{\eqref{deldef}}{=}\Delta.
\end{flalign*}
This proves \eqref{eq:capacity}, thereby completing the proof.
\end{proof}
\begin{lemma}\label{tailv2}
Let $\alpha\stackrel{\eqref{coneq}}{\colonequals}\int_{\R}z\theta(z)\gamma_{1}(z)dz$.  If $\Delta\leq\eta_{*} d/4$, $|\alpha-\alpha_*|<1/100$, and $d-2.6|\alpha-\alpha_*|>0$ then
$$F_{\alpha,\lambda}-V_{\alpha,\lambda}(\theta)\geq\frac{(d - 2.6|\alpha-\alpha_*|)^{2}}{32.7}.$$
\end{lemma}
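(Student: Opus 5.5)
Our plan is to combine the gap identity of Lemma \ref{lem:dual-gap} with the tail-defect bound of Lemma \ref{lower1}, with a correction for the mismatch between $\eta=\lambda/\alpha$ and $\eta_*$. By Lemma \ref{lem:dual-gap}, applicable since $|\alpha-\alpha_*|<1/100$,
$$F_{\alpha,\lambda}-V_{\alpha,\lambda}(\theta)=\int_{|z|>\eta}(\alpha|z|-\lambda)\,w(z)\,\gamma_1(z)\,dz,\qquad w(z)\colonequals 1-\theta(z)\mathrm{sign}(z)\in[0,2].$$
Lemma \ref{lower1} gives a lower bound on the unweighted tail mass $T\colonequals\int_{|z|>\eta_*}w\gamma_1\ge d/2$. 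The weight $\alpha|z|-\lambda=\alpha(|z|-\eta)$ vanishes at $|z|=\eta$, so the mass could sit near the threshold. Our plan is therefore a rearrangement (bathtub) argument: among all $w\in[0,2]$ with prescribed mass, the weighted integral is smallest when the mass is concentrated just outside the threshold. This is what produces the quadratic dependence.

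First, we pass from $\eta_*$ to $\eta$. Since $\eta=\lambda_*/\alpha$, we have $|\eta-\eta_*|\le \lambda_*|\alpha-\alpha_*|/(\alpha\alpha_*)\le c_1|\alpha-\alpha_*|$ with $c_1\approx 0.197/(0.76\cdot0.77)\approx 0.34$. The band between $\eta$ and $\eta_*$ carries $\int w\gamma_1\le 2\cdot 2\gamma_1(0)|\eta-\eta_*|\le 4(0.399)(0.34)|\alpha-\alpha_*|$, roughly $0.55|\alpha-\alpha_*|$. Hence
$$\int_{|z|>\eta}w\gamma_1\ \ge\ \tfrac12 d-0.55|\alpha-\alpha_*|\ \ge\ \tfrac12\big(d-2.6|\alpha-\alpha_*|\big)\equalscolon m/2.$$
There is slack here: the $2.6$ in the statement presumably absorbs these constants, possibly via a cruder estimate.

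Next comes the bathtub step. Set $M=\int_{|z|>\eta}w\gamma_1\ge m/2$, and note $w\gamma_1\le 2\gamma_1\le 2\gamma_1(0)$ pointwise. The weighted integral $\int_{|z|>\eta}\alpha(|z|-\eta)w\gamma_1$ is minimized by placing density $2\gamma_1(0)\approx0.798$ on the two intervals $\eta<|z|<\eta+\delta$, where $2\cdot0.798\,\delta=M$. This gives the lower bound
$$2\cdot 0.798\,\alpha\,\delta^2/2=\alpha M^2/(4\cdot 0.798)\ \ge\ \alpha m^2/(16\cdot0.798).$$
With $\alpha\ge 0.76$, this is at least $m^2/16.8$, better than $m^2/32.7$. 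A more careful version should replace $\gamma_1(0)$ by $\sup\gamma_1$ on the relevant range and keep the factor $\alpha$; we only need the result to fit under the constant $32.7$.

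The main obstacle is bookkeeping the constants so that exactly $2.6$ and $32.7$ come out. In particular, the band-correction step should be done relative to $d$ before halving, or some factor of $2$ may be lost. If the estimate is too tight, we will use the exact density bound $2\gamma_1(\eta)$ near $\eta\approx0.256$ rather than $2\gamma_1(0)$, or redo Lemma \ref{lower1}'s conclusion directly at threshold $\eta$. We will also confirm that the hypothesis $d-2.6|\alpha-\alpha_*|>0$ is precisely what makes $m>0$, so that the squaring step is legitimate.
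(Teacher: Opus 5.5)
Your proposal is correct and follows essentially the paper's route: the gap identity of Lemma \ref{lem:dual-gap}, the tail mass bound $\ge d/2$ from Lemma \ref{lower1}, a threshold shift from $\eta_*$ to $\eta$ costing at most $4\gamma_1(0)|\eta-\eta_*|$, and a quadratic lower bound on the weighted tail, since the mass cannot all sit near $|z|=\eta$. The paper replaces your exact bathtub minimization with a cruder split at $t=M/(8\gamma_1(\eta))$ (your $M$), showing at least half the mass lies beyond $\eta+t$, and uses the weaker estimate $|\eta-\eta_*|\le\frac45|\alpha-\alpha_*|$; this is where $2.6$ and $32.7$ come from, and your sharper bounds (coefficient $0.55$, final bound $m^2/16.8$) fit comfortably under them.
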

\begin{proof}
Define for $|z|\ge \eta$ the tail deficit $\delta(z)\colonequals1-\theta(z)\mathrm{sign}(z)\in[0,2]$, and set
\begin{equation}\label{mjdefs}
m=m_{\eta}\colonequals\int_{|z|>\eta} \delta(z)\gamma_{1}(z)\,dz,
\qquad
J=J_{\eta}\colonequals\int_{|z|>\eta} (|z|-\eta)\,\delta(z)\gamma_{1}(z)\,dz.
\end{equation}

Let 
\begin{equation}\label{tdef}
t\colonequals\frac{m}{8\gamma_{1}(\eta)}.
\end{equation}
Since $\gamma_{1}$ is decreasing on $[\eta,\infty)$,
\[
\int_{\eta<|z|<\eta+t}\gamma_{1}(z)\,dz \le 2t\gamma_{1}(\eta)
\stackrel{\eqref{tdef}}{=}\frac{m}{4}.
\]
Using $\delta\le 2$, we get
\[
\int_{\eta<|z|<\eta+t}\delta(z)\gamma_{1}(z)\,dz \le 2\int_{\eta<|z|<\eta+t}\gamma_{1}(z)\,dz \le \frac{m}{2},
\]
hence $\int_{|z|>\eta+t}\delta(z)\gamma_{1}(z)dz \stackrel{\eqref{mjdefs}}{\ge} m/2$. For $|z|\in[\eta+t,\infty)$ we have $|z|-\eta\ge t$, so
\begin{equation}\label{eq:J-m2}
J\stackrel{\eqref{mjdefs}}{=}\int_{|z|>\eta} (|z|-\eta)\delta(z)\gamma_{1}(z)dz
\ge t\int_{|z|>\eta+t} \delta(z)\gamma_{1}(z)dz
\ge t\cdot\frac{m}{2}
\stackrel{\eqref{tdef}}{=}\frac{m^2}{16\gamma_{1}(\eta)}.
\end{equation}
We then apply Lemma \ref{lower1}, which says $m_{\eta_*}\geq d/2$.  Therefore, writing $m=m-m_{\eta_*}+m_{\eta_*}$, from \eqref{mjdefs} and \eqref{etastin} below, $|m-m_{\eta_*}|\leq 4|\eta-\eta_*|/\sqrt{2\pi}\leq1.3|\alpha-\alpha_*|$, so if $d/2-1.3|\alpha-\alpha_*|>0$, then
\begin{equation}\label{fveq}
F_{\alpha,\lambda}-V_{\alpha,\lambda}(\theta)
\stackrel{\eqref{eq:gap-formula}\wedge\eqref{reedseq}}{=}\alpha J
\stackrel{\eqref{eq:J-m2}}{\geq}\alpha\frac{(d/2 - 1.3|\alpha-\alpha_*|)^{2}}{16\gamma_{1}(\eta)}
\stackrel{\eqref{alphaineq}}{\geq}
\frac{(d- 2.6|\alpha-\alpha_*|)^{2}}{32.7}.
\end{equation}
In the last inequality, we used $\eta \stackrel{\eqref{reedseq}}{=} \lambda/\alpha$ and $\eta_{*}=\lambda/\alpha^{*}$, $\alpha_*\stackrel{\eqref{reedseq}}{=}2\gamma_{1}(\eta_*)$.  If $\alpha,\alpha_{*}>1/2$ and $\lambda<.2$, we have
\begin{equation}\label{etastin}
|\eta-\eta_{*}|=\lambda|\alpha^{-1}-\alpha_{*}^{-1}|=\lambda|\alpha-\alpha_{*}|/(\alpha\alpha_{*})
\leq (4/5)|\alpha-\alpha_{*}|.
\end{equation}
\begin{equation}\label{gamineq}
|\gamma_{1}(\eta)-\gamma_{1}(\eta_{*})|\leq\frac{1}{4}|\eta-\eta_{*}|
\leq\frac{1}{5}|\alpha-\alpha_*|.
\end{equation}
$$
\Big|\frac{\alpha}{2\gamma_{1}(\eta)}
-\frac{\alpha_*}{2\gamma_{1}(\eta_*)}\Big|
=\Big|\frac{\alpha-\alpha_*}{2\gamma_{1}(\eta)}
-\alpha_* \Big(\frac{1}{2\gamma_{1}(\eta_*)}-\frac{1}{2\gamma_{1}(\eta)}\Big)\Big|
\leq\Big|\frac{\alpha-\alpha_*}{2\gamma_{1}(\eta)}\Big|
+\Big|\alpha_* \frac{\gamma_{1}(\eta)-\gamma_{1}(\eta_*)}{2\gamma_{1}(\eta)\gamma_{1}(\eta_*)}\Big|.
$$
So if $|\alpha-\alpha_*|<1/100$ with $\alpha_*=.7722\ldots$, and $\eta_*=.25573\ldots$, we get $\gamma_{1}(\eta)\geq.387$, and
\begin{equation}\label{alphaineq}
\frac{\alpha}{2\gamma_{1}(\eta)}
\geq 1 -\frac{.01}{2(.387)}-.7722\frac{.01 /5}{2\cdot.387\cdot.386}\geq.98.
\end{equation}

\end{proof}



%

\begin{lemma}[Complementary regime, Odd Case: large $\Delta$ forces a direct gap]\label{lem:Delta-large-gap}

Let $\theta\colon\R\to[-1,1]$ be odd.  Let $\alpha\stackrel{\eqref{coneq}}{\colonequals}\int_{\R}z\theta(z)\gamma_{1}(z)dz$.  
Let $d\colonequals \inf_{\overline{\theta}\in\Theta}\|\theta-\overline{\theta}\|_{1}$ and let
\[
\Delta=\Delta_{\eta_*}\colonequals\left|\int_{-\eta_*}^{\eta_*} z\,\theta(z)\gamma_{1}(z)\,dz\right|.
\]
Assume $|\alpha-\alpha_*|<1/100$.  Assume $\int_{-\eta}^{\eta}z\theta(z)\gamma_{1}(z)dz\geq0$, and
\begin{equation}\label{eq:Delta-large-assumption}
\Delta \ \ge\ \frac{\eta_*}{4}\,d.
\end{equation}
Then, if $\frac{1}{8}\,d(1 - 4|\alpha-\alpha_*|) - 6.4|\alpha-\alpha^{*}|>0$,
\begin{equation}\label{eq:Delta-large-conclusion}
F_{\alpha,\lambda}-V_{\alpha,\lambda}(\theta)
\ \ge\
\min\Big[d\Big(\frac{\lambda}{8}-|\alpha-\alpha_*|\Big),\ \frac{.98}{8}\left(\frac{1}{8}\,d(1 - 4|\alpha-\alpha_*|) - 6.4|\alpha-\alpha^{*}|\right)^2\Big].
\end{equation}
\end{lemma}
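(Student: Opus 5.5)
The plan is to reduce everything to the gap identity of Lemma \ref{lem:dual-gap}. For feasible $\theta$ with $|\alpha-\alpha_*|<1/100$, that identity reads
\[
F_{\alpha,\lambda}-V_{\alpha,\lambda}(\theta)=\int_{|z|>\eta}(\alpha|z|-\lambda)\,\delta(z)\,\gamma_1(z)\,dz=\alpha J_\eta,
\qquad \delta(z)\colonequals 1-\theta(z)\mathrm{sign}(z)\in[0,2],
\]
with $J_\eta$ as in \eqref{mjdefs}. So it suffices to bound $J_\eta$ from below. The only input beyond this is the moment constraint, which converts a large inner moment $\Delta$ into a large weighted tail deficit.

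\textbf{Step 1: from $\Delta$ to the tail deficit.} I split the constraint $\alpha=\int_{\R}z\theta\gamma_1$ at $\eta_*$ and use $\int_{|z|>\eta_*}|z|\gamma_1\,dz=2\gamma_1(\eta_*)=\alpha_*$. This gives
\[
\alpha=\alpha_*-\int_{|z|>\eta_*}|z|\delta(z)\gamma_1(z)\,dz+\int_{-\eta_*}^{\eta_*}z\theta\gamma_1\,dz .
\]
By the sign assumption on the inner moment (up to moving the cutoff between $\eta$ and $\eta_*$), the last integral equals $\Delta$. Then \eqref{eq:Delta-large-assumption} yields
\[
W\colonequals\int_{|z|>\eta_*}|z|\delta\gamma_1\,dz\ \ge\ \frac{\eta_*}{4}d-|\alpha-\alpha_*|.
\]
I then replace $\eta_*$ by $\eta$ in the domain of integration. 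This costs at most $O(|\eta-\eta_*|)\le O(|\alpha-\alpha_*|)$ by \eqref{etastin}, since $\delta\le2$ and $|z|\gamma_1\le1$ near $\eta$. These shift errors are what accumulate into the $4|\alpha-\alpha_*|$ and $6.4|\alpha-\alpha_*|$ terms.

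\textbf{Step 2: dichotomy on where the weighted deficit lives.} I split $W$ (now over $|z|>\eta$) into the parts over $|z|\ge2\eta$ and over $\eta<|z|<2\eta$.

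\emph{Far case.} Suppose the far part is at least $W/2$. On $|z|\ge 2\eta$ we have $\alpha|z|-\lambda\ge\alpha|z|/2$, so
\[
\alpha J_\eta\ \ge\ \frac{\alpha}{2}\cdot\frac W2\ \approx\ \frac{\alpha\eta_*}{16}\,d=\frac{\lambda}{16}\,d .
\]
To reach the stated constant $\lambda/8$, I will tune the cutoff (for instance splitting at $|z|=c\eta$ with $c$ chosen optimally) and absorb the error into the $-d|\alpha-\alpha_*|$ term. This yields the first entry of the minimum.

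\emph{Near case.} Otherwise, at least half of $W$ sits in $\eta<|z|<2\eta$, where $|z|<2\eta$. Hence
\[
m_\eta\ \ge\ \frac{W}{4\eta}\ \gtrsim\ \frac d{16}-O(|\alpha-\alpha_*|).
\]
Now I apply \eqref{eq:J-m2}, which gives $J\ge m^2/(16\gamma_1(\eta))$. Combined with $\alpha/(2\gamma_1(\eta))\ge .98$ from \eqref{alphaineq}, this gives $\alpha J\ge\frac{.98}{8}m^2$. Substituting the bound on $m$, and using the hypothesis that $\frac18 d(1-4|\alpha-\alpha_*|)-6.4|\alpha-\alpha_*|>0$ so that squaring is monotone, produces the second entry of the minimum.

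\textbf{Main obstacle.} The delicate part is the bookkeeping in Step 1 and in the near case. I must track the discrepancy between $\eta$ and $\eta_*$, both in the sign condition (stated on $[-\eta,\eta]$ while $\Delta$ is measured on $[-\eta_*,\eta_*]$) and in converting $W$ into $m_\eta$ with the factor $1/\eta$ rather than $1/\eta_*$. Doing this while landing exactly on the constants $1/8$, $4$ and $6.4$ requires care, using $\eta\le\eta_*+\frac45|\alpha-\alpha_*|$ and $\eta_*\approx.2557$. I would first do the computation with $\alpha=\alpha_*$ and then add the perturbation terms one at a time.
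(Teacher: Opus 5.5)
Your Step 1 is fine. It is the paper's identity \eqref{delid} evaluated at $\eta_*$ instead of $\eta$, and it gives $W_\eta\colonequals\int_{|z|>\eta}|z|\delta(z)\gamma_1(z)\,dz\ge\Delta-O(|\alpha-\alpha_*|)$.

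The gap is in Step 2. Splitting by location loses a factor of $2$ in each branch: with cutoff $2\eta$ you only get $J\ge W/4$ in the far case, or $\eta m\ge W/4$ in the near case. The near case therefore gives $m\gtrsim d/16$ and $\alpha J\ge\frac{.98}{8}(d/16)^2$. That is a quarter of the second entry $\frac{.98}{8}(d/8)^2$ of \eqref{eq:Delta-large-conclusion}. Since $d\le 2$, that second entry is always the minimum: it is at most about $.0039\,d$, while the first entry is at least $.014\,d$. So your near case does not prove the lemma, and the claim that substituting $m\gtrsim d/16$ ``produces the second entry'' is false.

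Your proposed remedy, tuning the cutoff to reach $\lambda/8$ in the far case, goes the wrong way, because raising the far coefficient lowers the near one. In fact no location split gives both stated constants. With cutoff $c\eta$ and a fraction $\phi$ of $W$ in the far region, you get either $J\ge(1-1/c)\phi W$ or $\eta m\ge(1-\phi)W/c$. But $(1-1/c)\phi+(1-\phi)/c=\phi+(1-2\phi)/c<1$ for all $c>1$, $\phi\in[0,1]$. So the two coefficients can never both be at least $1/2$, which is what $\lambda/8$ and $d/8$ require.

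The missing idea is that no location split is needed. The pointwise identity $|z|=\eta+(|z|-\eta)$ gives the exact decomposition $W_\eta=\eta m_\eta+J_\eta$, and you dichotomize on $J$ itself:
\begin{itemize}
\item If $J\ge\Delta/2$, the gap formula gives $\alpha J\ge\alpha\eta_*d/8\ge d(\lambda/8-|\alpha-\alpha_*|)$.
\item If $J<\Delta/2$, then $\eta m=W_\eta-J>\Delta/2-1.6|\alpha-\alpha_*|$ after the $\eta$ versus $\eta_*$ bookkeeping. Hence $m>\frac18 d(1-4|\alpha-\alpha_*|)-6.4|\alpha-\alpha_*|$, and \eqref{eq:J-m2} with \eqref{alphaineq} yields the second entry.
\end{itemize}
This is the paper's proof, and your error tracking between $\eta$ and $\eta_*$ carries over unchanged.

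A location split could be rescued only by a different argument from the one you wrote. You would make the near coefficient exactly $1/2$ (for instance $c=3/2$, $\phi=1/4$, giving $m\ge W/(2\eta)$). You would then check separately that the weaker far bound $\alpha J\ge\alpha W/12$ still dominates the second entry, which it does because that entry is the minimum.
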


\begin{proof}

Let $\delta(z)=1-\theta(z)$ for $z\ge\eta$ and define $m,J$ as in \eqref{mjdefs}.
We split into two subcases.

\smallskip
\noindent\emph{Case 1: $J\ge \Delta/2$.}
Then by the gap formula \eqref{eq:gap-formula} and \eqref{reedseq}
\[
F_{\alpha,\lambda}-V_{\alpha,\lambda}(\theta)=\alpha J \ge \alpha \Delta/2.
\]
Using $\Delta\ge(\eta_* /4)d$ and $\alpha_*\eta_*\stackrel{\eqref{reedseq}}{=}\lambda$ gives
\[
F_{\alpha,\lambda}-V_{\alpha,\lambda}(\theta)
\ge \alpha\cdot \frac{\eta_*}{8}d 
=(\alpha-\alpha_*)\frac{\eta_*}{8}d 
+\alpha_*\frac{\eta_*}{8}d 
\geq d\Big(\frac{\lambda}{8}-|\alpha-\alpha_*|\Big).
\]
%
%



\smallskip
\noindent\emph{Case 2: $J< \Delta/2$.}
Let $s\colonequals\mathrm{sign}(\int_{0}^{\eta}z\theta(z)\gamma_{1}(z)dz)$ (and $s=1$ if the integral is zero).
We first show that (defining $\Delta_{\eta}\colonequals|\int_{-\eta}^{\eta}z\theta(z)\gamma_{1}(z)dz|$)
\begin{equation}\label{delid}
\eta m+J=s\Delta_\eta - \Big(\alpha-2\gamma_{1}(\eta)\Big).
\end{equation}

This follows since (recalling $\theta$ is odd), by definition of $\alpha$
\begin{flalign*}
\alpha
&=
2\int_0^\eta z\theta(z)\gamma_{1}(z)dz 
+ 2\int_\eta^\infty z\theta(z)\gamma_{1}(z)dz\\
&=2\int_0^\eta z\theta(z)\gamma_{1}(z)dz + 2\gamma_{1}(\eta) - 2\int_\eta^\infty z(1-\theta(z))\gamma_{1}(z)dz\\
& \stackrel{\eqref{mjdefs}}{=} s\Delta_\eta + 2\gamma_{1}(\eta) - (\eta m +J).
\end{flalign*}


By assumption $s=1$.  Now, from \eqref{delid} and $\alpha_*/2 = \gamma_{1}(\eta_*)$ (by \eqref{eq:reeds-point})
\begin{flalign*}
\eta m 
&= \Delta_\eta - J - \Big(\alpha-2\gamma_{1}(\eta)\Big)\\
& = \Delta_\eta - J - \Big(\alpha-\alpha_*-2\gamma_{1}(\eta)+2\gamma_{1}(\eta_*)\Big)\\
&\stackrel{\eqref{gamineq}}{\geq}\Delta_{\eta}-\Delta+s\Delta - J - |\alpha-\alpha_*| - 2|\alpha-\alpha_*|/5\\
&\geq\Delta/2-|\Delta_{\eta}-\Delta| - |\alpha-\alpha_*| - 2|\alpha-\alpha_*|/5\\
&\stackrel{\eqref{etastin}}{\geq}
\Delta/2 - .173|\alpha-\alpha_*|- |\alpha-\alpha_*| - 2|\alpha-\alpha_*|/5
\end{flalign*}

(By definition, $|\Delta_{\eta}-\Delta|\leq \frac{2}{\sqrt{2\pi}}\max(\eta,\eta_*)|\eta-\eta_*|\stackrel{\eqref{etastin}}{\leq}\frac{2}{\sqrt{2\pi}}(.27)(4/5)|\alpha-\alpha_*|$.)  In summary,
$$
\eta m
>\Delta/2 - 1.6|\alpha-\alpha_{*}|.
$$

Using $\Delta\geq(\eta_*/4)d$
\begin{equation}\label{mineq}
m > \frac{\Delta}{2\eta} - \frac{1.6}{\eta}|\alpha-\alpha_{*}| \ge \frac{1}{8}\frac{\eta_*}{\eta}\,d - 6.4|\alpha-\alpha^{*}|
\geq\frac{1}{8}\,d(1 - 4|\alpha-\alpha_*|) - 6.4|\alpha-\alpha^{*}|.
\end{equation}
%
%
%
Therefore, if $\frac{1}{8}\,d(1 - 4|\alpha-\alpha_*|) - 6.4|\alpha-\alpha^{*}|>0$,
\[
F_{\alpha,\lambda}-V_{\alpha,\lambda}(\theta)
\stackrel{\eqref{eq:gap-formula}}{=}\alpha J \stackrel{\eqref{eq:J-m2}}{\ge} \alpha\cdot \frac{m^2}{16\gamma_{1}(\eta)}
\stackrel{\eqref{alphaineq}}{\geq}\frac{.98}{8}\,m^2
\stackrel{\eqref{mineq}}{\ge} 
\frac{.98}{8}\left(\frac{1}{8}\,d(1 - 4|\alpha-\alpha_*|) - 6.4|\alpha-\alpha^{*}|\right)^2.
\]
Combining the Case 1 and Case 2 yields \eqref{eq:Delta-large-conclusion}.
\end{proof}

\section{Tail Equality}\label{sectail}

Let $\theta\colon\R\to[-1,1]$.  Define $\theta^{\rm odd}(z)\colonequals (\theta(z)-\theta(-z))/2$, $\forall$ $z\in\R$.
\begin{lemma}\label{elem}
Let $x,y\in[-1,1]$.  Let $s\colonequals \frac{x-y}{2}$.  Then
$$\frac{|1-x|+|-1-y|}{2}=|1 - s|.$$
Consequently, choosing $x = \theta(z)$ and $y = \theta(-z)$, upon integration we get
$$\int_{|z|>\eta}|\mathrm{sign}(z)-\theta(z)|\gamma_{1}(z)dz
=\int_{|z|>\eta}|\mathrm{sign}(z)  - \theta^{\rm odd}(z)|\gamma_{1}(z)dz.$$
\end{lemma}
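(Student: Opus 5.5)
Since $x,y\in[-1,1]$, both $1-x\ge0$ and $1+y\ge 0$. Hence $|1-x|+|-1-y|=(1-x)+(1+y)=2-(x-y)$, so the left side equals $1-\frac{x-y}{2}=1-s$. Moreover $s=\frac{x-y}{2}\in[-1,1]$, so $1-s\ge0$ and $1-s=|1-s|$. This proves the pointwise identity; the only thing to watch is using the sign information $x,y\le 1$, $y\ge -1$ to drop the absolute values.

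For the integrated statement, split the region $\{|z|>\eta\}$ into $\{z>\eta\}$ and $\{z<-\eta\}$. Substitute $z\mapsto -z$ in the second piece; $\gamma_1$ is even, so
$$\int_{|z|>\eta}|\mathrm{sign}(z)-\theta(z)|\gamma_{1}(z)dz=\int_{\eta}^{\infty}\Big(|1-\theta(z)|+|-1-\theta(-z)|\Big)\gamma_{1}(z)dz.$$
Apply the pointwise identity with $x=\theta(z)$ and $y=\theta(-z)$. Then $s=\theta^{\rm odd}(z)$, and the integrand becomes $2|1-\theta^{\rm odd}(z)|$.

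Finally, $\theta^{\rm odd}$ is odd, so for $z>\eta$ we have $|1-\theta^{\rm odd}(z)|=|-1-\theta^{\rm odd}(-z)|$. Therefore $2\int_\eta^\infty|1-\theta^{\rm odd}(z)|\gamma_1(z)dz$ equals $\int_{|z|>\eta}|\mathrm{sign}(z)-\theta^{\rm odd}(z)|\gamma_1(z)dz$, by undoing the same reflection. There is no real obstacle; the argument is elementary bookkeeping.
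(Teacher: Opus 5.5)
Your proposal is correct and follows essentially the same route as the paper. Both drop the absolute values using $x\le 1$, $y\ge -1$ and $s\le 1$, then fold the integral onto $\{z>\eta\}$ via the evenness of $\gamma_1$, apply the pointwise identity with $s=\theta^{\rm odd}(z)$, and unfold using the oddness of $\theta^{\rm odd}$.
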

\begin{proof}
Since $x\leq1$ and $y\geq-1$ the left side is
$$[1-x+1+y]/2 = [2 - x+y]/2 = 1 - (x-y)/2.$$
Similarly, since $s\leq1$, the right side is $|1-s|=1-s = 1-(x-y)/2$.  The final assertion follows from the first after writing
\begin{flalign*}
&\int_{|z|>\eta}|\mathrm{sign}(z)-\theta(z)|\gamma_{1}(z)dz
=\int_{z>\eta}\Big(|1-\theta(z)| + |-1-\theta(-z)|\Big)\gamma_{1}(z)dz\\
&\hspace{3cm}=\int_{z>\eta}2|1 - \theta^{\rm odd}(z)|\gamma_{1}(z)dz
=\int_{|z|>\eta}|\mathrm{sign}(z) - \theta^{\rm odd}(z)|\gamma_{1}(z)dz.
\end{flalign*}
\end{proof}

\section{Proof of Main Theorem}\label{seceleven}

\begin{proof}[Proof of Theorem \ref{mainthm}]

Since we let $n\to\infty$ to prove Theorem \ref{mainthm}, we assume below that $n\geq2$ (though the proof also works when $n=1$, in which case we just choose $A\colonequals\emptyset$ below).  Fix $\epsilon\colonequals10^{-7}$.  Let $f\colon\R^{n}\to[-1,1]$ maximize $\|R_{\lambda,\beta}f\|_{1}$.  (We may assume $f$ takes values in $\{-1,1\}$ by convexity.)  Let $d\colonequals\inf_{g\in\M}\|f-g\|_{1}$.  Define $\alpha\colonequals\|\int_{\R^{n}}xf(x)\gamma_{n}(x)dx\|_{\ell_{2}(\R^{n})}$.

\textbf{Case 1.}  If $d<\epsilon$, then apply \eqref{upperconstants} to get

$$
\|R_{\lambda,\beta}f\|_{1}
\leq\|R_{\lambda}\|_{\infty\to1}
-\beta(.0057),\qquad\forall\,0<\beta<10^{-10}.
$$

\textbf{Case 2.}  Suppose $d\geq\epsilon$.

Define 
\begin{equation}\label{thetadefz}
\theta(z)=\theta_{f}(z)\colonequals \E[f\,|\, P_{1}f=\alpha z],\qquad\forall\,z\in\R.
\end{equation}
Then $\alpha\stackrel{\eqref{thetadefz}}{=}\int_{\R}z\theta(z)\gamma_{1}(z)dz$.  Let $\alpha_{*}\colonequals.772216503281451\ldots$ 

\textbf{Sub-Case 2.1.}  Suppose $|\alpha-\alpha_{*}|>10^{-12}$.  Then Lemma \ref{taylorlem} implies that
\begin{equation}\label{seven1}
\|R_{\lambda}f\|_{1}\leq \|R_{\lambda}\|_{\infty\to1} - \frac{9}{10}\min((\alpha-\alpha_*)^{2},1/100).
\end{equation}
Therefore, by the $L_{1}$ triangle inequality and the last part of Lemma \ref{l1bd}
$$\|R_{\lambda,\beta}f\|_{1}
\stackrel{\eqref{rlbdef}}{\leq} \|R_{\lambda}f\|_{1}+\beta\|P_{3}f\|_{1}
\stackrel{\eqref{seven1}}{\leq}\|R_{\lambda}\|_{\infty\to1} - \frac{9}{10}\cdot10^{-24} + \beta.$$

\textbf{Sub-Case 2.2.}  Suppose $|\alpha-\alpha_{*}|<10^{-12}$.  We have by the triangle inequality and the last part of Lemma \ref{l1bd}
\begin{equation}\label{rtr}
\|R_{\lambda,\beta}f\|_{1}
\stackrel{\eqref{rlbdef}}{\leq} \|R_{\lambda}f\|_{1}+\beta\|P_{3}f\|_{1}
\leq \|R_{\lambda}f\|_{1} +\beta
\stackrel{\eqref{vdef}}{=}V_{\alpha,\lambda}(\theta_{f})+\beta.
\end{equation}
By definition \eqref{vdef}, the odd part of $\theta_{f}$ satisfies $V_{\alpha,\lambda}(\theta_{f})=V_{\alpha,\lambda}(\theta_{f}^{\rm odd})$, so
\begin{equation}\label{peneq}
\|R_{\lambda,\beta}f\|_{1}
\stackrel{\eqref{rtr}}{\leq}
V_{\alpha,\lambda}(\theta_{f}^{\rm odd})+\beta.
\end{equation}

Denote $$d'\colonequals\inf_{\overline{\theta}\in\Theta}\|\overline{\theta}-\theta_{f}^{\rm odd}\|_{1}.$$


\textbf{Sub-Case 2.2.1}.  Suppose $d'>10^{-10}$.  We can then apply \eqref{eq:Delta-large-conclusion} and Lemma \ref{tailv2} to get
%
\begin{equation}\label{beq}
\begin{aligned}
\|R_{\lambda,\beta}f\|_{1}
&\stackrel{\eqref{peneq}}{\leq} F_{\alpha,\lambda}+\beta
-\min\Big[d'\Big(\frac{\lambda}{8}-|\alpha-\alpha_*|\Big),\ \frac{.98}{8}\left(\frac{1}{8}\,d'(1 - 4|\alpha-\alpha_*|) - 6.4|\alpha-\alpha^{*}|\right)^2\Big]\\
&\leq
\|R_{\lambda}\|_{\infty\to1}+\beta
-\frac{.98}{8}\left(\frac{10^{-10}}{8}(1-10^{-10}) - 6.4\cdot 10^{-12}\right)^2\Big].
\end{aligned}
\end{equation}

(When we apply \eqref{eq:Delta-large-conclusion} we need to verify that $\Delta_{\eta}'\colonequals\int_{-\eta}^{\eta}z\theta_{f}^{\rm odd}(z)\gamma_{1}(z)dz\geq0$.  Since $\eta,m,J>0$, \eqref{delid} implies that $\Delta_{\eta}'\geq \alpha-2\gamma_{1}(\eta)$.  Since $|\alpha-\alpha_*|<10^{-12}$, we have $\alpha-2\gamma_{1}(\eta)\geq -(7/5)|\alpha-\alpha_*|\geq -(7/5)10^{-12}$ by \eqref{gamineq}.  So, if $\Delta_{\eta}'<0$, we get $|\Delta_{\eta}'|\leq (7/5)10^{-12}$, but \eqref{eq:Delta-large-conclusion} assumes $\Delta\geq\eta_* d'/4\geq10^{-10}/16$, and $$\Delta=|\Delta_{\eta_*}'-\Delta_{\eta}'+\Delta_{\eta}'|\stackrel{\eqref{etastin}}{\leq}\frac{7}{5}10^{-12}
+2|\eta-\eta_*|(.26)\gamma_{1}[-\eta_*,\eta_*]
\leq\frac{7}{5}10^{-12}+.09\cdot 10^{-12}$$ which contradicts $\Delta\geq 10^{-10}/16$.  That is, we have verified that $\Delta_{\eta}'\geq0$, as desired.)

\textbf{Sub-Case 2.2.2}.  Suppose $d'\leq 10^{-10}$.  Since each $\overline{\theta}\in\Theta$ satisfies $\overline{\theta}(z)=\mathrm{sign}(z)$ for all $|z|>\eta_*$, we have
$$\int_{|z|>\eta_*}|\mathrm{sign}(z) - \theta_{f}^{\rm odd}(z)|\gamma_{1}(z)dz\leq d'.$$
By Lemma \ref{elem}, we then have
\begin{equation}\label{taileq}
\int_{|z|>\eta_*}|\mathrm{sign}(z) - \theta_{f}(z)|\gamma_{1}(z)dz\leq d'.
\end{equation}
We will show this contradicts $d\geq\epsilon$.
Let $u\colonequals \int_{\R^{n}}xf(x)\gamma_{n}(x)dx\in\R^{n}$.  ($u\neq0$ since $\|u\|=\alpha$ and $|\alpha-\alpha_*|<10^{-12}$.)  Denote $\|u\|\colonequals\|u\|_{\ell_{2}(\R^{n})}$.  For any $v\in\R^{n}$ with $\langle v,u\rangle =0$ and $\|v\|=1$, we have by rearrangement
\begin{equation}\label{momeq}
\begin{aligned}
&\Big|\int_{|\langle\frac{u}{\|u\|}, x\rangle|>\eta_{*}}\langle v,x\rangle f(x)\gamma_{n}(x)dx
\Big|\\
&\qquad=\Big|\int_{|\langle\frac{u}{\|u\|}, x\rangle|>\eta_{*}}\langle v,x\rangle (f(x)-\mathrm{sign}(\langle u, x\rangle))\gamma_{n}(x)dx
\Big|\\
&\qquad\leq 2\int_{-\Phi^{-1}((5/4)d')}^{\infty}s\gamma_{1}(s)ds
=2\gamma_{1}(-\Phi^{-1}((5/4)d'))\\
&\qquad\leq .583\cdot 2.5\cdot d'\log(1/[(5/4)d'])
\leq 3.33\cdot 10^{-9}.
\end{aligned}
\end{equation}

%
That is, the integrand is largest when $f=-\mathrm{sign}(\langle u,x\rangle)$ for $\langle v,x\rangle>a$ for some $a>0$, so the integral is bounded by $2$ times the length of the Gaussian moment of a half space with measure at most $(5/4)d'$.
Here we used $\gamma_{1}(a)\leq .583\Phi(-a)\log(1/\Phi(-a))$ $\forall$ $a\geq2.3$.  By definition of $u$ and $\langle v,u\rangle=0$, we have
$\int_{\R^{n}}\langle v,x\rangle f(x)\gamma_{n}(x)dx=\langle v,u\rangle=0$, so that
$$
\Big|\int_{|\langle\frac{u}{\|u\|}, x\rangle|<\eta_{*}}\langle v,x\rangle f(x)\gamma_{n}(x)dx
\Big|
\stackrel{\eqref{momeq}}{\leq} 3.33\cdot 10^{-9}.
$$
Let $\delta\colonequals 3.33\cdot 10^{-9} / .0805\leq 4.2\cdot 10^{-8}$.  Let $\widetilde v\in\R^{n}$ with $\langle \widetilde v,u\rangle=0$ and $\widetilde v\neq0$ (since $n\geq2$ this is possible).  Let $A\colonequals\{x\in\R^{n}\colon|\langle\frac{u}{\|u\|}, x\rangle|<\eta_{*}, \langle\frac{\widetilde v}{\|\widetilde v\|}, x\rangle\geq0\}$.  Choose $\widetilde{v}$ such that $\int_{A}x\gamma_{n}(x)dx$ is parallel to the part of the vector $\int_{\{x\in\R^{n}\colon |\langle x,u/\|u\|\rangle|<\eta_*\}}xf(x)\gamma_{n}(x)dx$ that is perpendicular to $u$.  Then $\gamma_{n}(A)=\gamma_{1}[-\eta_*,\eta_*]/2\approx.10092$, $\int_{A}\langle x,u\rangle\gamma_{n}(x)dx=0$, $\|\int_{A}x\gamma_{n}(x)dx\|=\gamma_{1}[-\eta_*,\eta_*]\int_{0}^{\infty}s\gamma_{1}(s)ds>.0805$.
%
%
%
%
%
%
So, replacing $\widetilde{v}$ with $-\widetilde{v}$ if necessary, there exists $0<\delta'\leq\delta$ such that

\begin{equation}\label{heq1}
\Big\langle v,\,\delta'\int_{A}x\gamma_{n}(x)dx
+\int_{|\langle\frac{u}{\|u\|},x\rangle|<\eta_{*}}x f(x)\gamma_{n}(x)dx\Big\rangle
=0,\qquad\forall\,v\in\R^{n}\text{ with }\langle v, u\rangle=0.
\end{equation}
That is, the inner Gaussian moment of $f$ can be made parallel to $u$ by adding a $\delta' 1_{A}$ to $f$.  To make this moment zero, we add a multiple of $1_{B}$ where $B\colonequals\{x\in\R^{n}\colon0\leq\langle\frac{u}{\|u\|}, x\rangle<\eta_{*}\}$.  Then $\int_{B}x\gamma_{n}(x)dx=\frac{u}{\|u\|}(1- e^{-\eta_*^2 /2})/\sqrt{2\pi}\approx \frac{u}{\|u\|}.012834$, $\gamma_{n}(B)=\gamma_{1}[0,\eta_*]\approx.10092$, and by definition of $\alpha$,
\begin{flalign*}
\alpha
&=\int_{\R}\theta_{f}(z)z\gamma_{1}(z)dz
=\int_{\R}\theta_{f}^{\rm odd}(z)z\gamma_{1}(z)dz
=\int_{-\eta_*}^{\eta_*}z\theta_{f}^{\rm odd}(z)\gamma_{1}(z)dz
+\int_{|z|>\eta_*}z\theta_{f}^{\rm odd}(z)\gamma_{1}(z)dz\\
&=\int_{-\eta_*}^{\eta_*}z\theta_{f}^{\rm odd}(z)\gamma_{1}(z)dz
+\int_{|z|>\eta_*}z[\theta_{f}^{\rm odd}(z)-\mathrm{sign}(z)]\gamma_{1}(z)dz
+\int_{|z|>\eta_*}z\cdot\mathrm{sign}(z)\gamma_{1}(z)dz
\end{flalign*}
Since $\int_{|z|>\eta_*}|z|\gamma_{1}(z)dz=2\gamma_{1}(\eta_*)\stackrel{\eqref{eq:reeds-point}}{=}\alpha_*$, and $\int_{|z|>\eta_*}|z||\theta_{f}^{\rm odd}(z)-\mathrm{sign}(z)|\gamma_{1}(z)dz\leq3.33\cdot 10^{-9}$ by the same argument as \eqref{momeq}, we have by the triangle inequality
$$
\Big|\int_{-\eta_*}^{\eta_*}z\theta_{f}^{\rm odd}(z)\gamma_{1}(z)dz\Big|
\leq |\alpha-\alpha_*|+\int_{|z|>\eta_*}|z||\theta_{f}^{\rm odd}(z)-\mathrm{sign}(z)|\gamma_{1}(z)dz
\leq 10^{-12}+ 3.33\cdot 10^{-9}.
$$
Consequently, there exists $0<\omega'\leq\omega\colonequals 3.34\cdot 10^{-9}/.012834\leq 2.61\cdot 10^{-7}$ such that (after replacing $B$ if necessary with its complement in the strip $\{x\in\R^{n}\colon |\langle x,u/\|u\|\rangle|<\eta_*\}$)
$$\int_{|\langle\frac{u}{\|u\|},x\rangle|<\eta_{*}}\langle u,x\rangle(\omega'\cdot1_{B}(x) + f(x))\gamma_{n}(x)dx=0.$$

Combining this with \eqref{heq1}, we then define
\begin{equation}\label{heq2}
h\colonequals\frac{\delta'}{1+\delta'+\omega'}1_{A}
+\frac{\omega'}{1+\delta'+\omega'}1_{B}
+\frac{1}{1+\delta'+\omega'}f\cdot 1_{\big|\big\langle x, \frac{u}{\|u\|}\big\rangle\big|<\eta_{*}} 
+ \mathrm{sign}(\big\langle x, u\rangle)1_{\big|\big\langle x, \frac{u}{\|u\|}\big\rangle\big|>\eta_{*}}.
\end{equation}
Then $h$ takes values in $[-1,1]$, $\int_{\R^{n}}xh(x)\gamma_{n}(x)dx$ is parallel to $u$, and $\theta_{h}(z)\colonequals \E(h|\langle x, \frac{u}{\|u\|}\rangle=z)$ satisfies the conditions of Lemma \ref{maxchar} (i.e. $\int_{-\eta_{*}}^{\eta_{*}}\theta_{h}(z)z\gamma_{1}(z)dz=0$ by \eqref{heq1} and \eqref{heq2}, and $\theta_{h}(z)=\mathrm{sign}(z)$ $\forall$ $|z|>\eta_{*}$).  So, after rotating the domain, we have $h\in\N$ where $\N$ is defined in Lemma \ref{convlem}.  (By rotation invariance of the Gaussian measure, we may assume $u$ is parallel to the $x_1$ axis when applying this Lemma.)  Lemma \ref{convlem} says there exists $g\in\M$ with $\|f-g\|_{1}\leq\|f-h\|_{1}$.  From the Case 2 assumption and the definition of $d=\inf_{g\in\M}\|f-g\|_{1}$, this means $\|f-h\|_{1}\geq\epsilon$.
But \eqref{heq2} implies
\begin{flalign*}\|f-h\|_{1}
&=\int_{\big|\big\langle x,\frac{u}{\|u\|}\big\rangle\big|<\eta_{*}}|f(x)-h(x)|\gamma_{n}(x)dx
+\int_{\big|\big\langle x,\frac{u}{\|u\|}\big\rangle\big|>\eta_{*}}|f(x)-h(x)|\gamma_{n}(x)dx\\
&\leq (2(\delta+w)+w+2w+\delta+w)\gamma_{1}[-\eta_{*},\eta_{*}]/4 + \int_{|z|>\eta_{*}}|\theta_{f}(z)-\mathrm{sign}(z)|\gamma_{1}(z)dz\\
&\stackrel{\eqref{taileq}}{\leq} 
(6\delta+6w)(.051)+d'
\leq
(6\cdot 4.2\cdot10^{-8}+6\cdot 2.61\cdot 10^{-7})(.051)
+ 10^{-10}
<10^{-7}.
\end{flalign*}
%
This contradicts $\|f-h\|_{1}\geq\epsilon=10^{-7}$.  That is, Sub-Case 2.2.2 does not occur.

\textbf{Combine all cases}.  Combining the above cases gives (for all $0<\beta<10^{-10}$)

\begin{equation}\label{finalineq}
\begin{aligned}
\|R_{\lambda,\beta}\|_{\infty\to1}
&\leq \|R_{\lambda}\|_{\infty\to1}
+\max\Big[-\beta(.0057), \beta-9\cdot 10^{-25}, \beta - \frac{.9799}{8}\left(\frac{10^{-10}}{8} - 6.4\cdot 10^{-12}\right)^2\Big]\\
&\leq\|R_{\lambda}\|_{\infty\to1}
+\max\Big[-\beta\cdot 5.7\cdot 10^{-3}, \beta-9\cdot 10^{-25}, \beta - 4.5\cdot 10^{-24}\Big].
\end{aligned}
\end{equation}
%
%

Choosing $\beta\colonequals 8\cdot 10^{-25}$ gives
$$
\|R_{\lambda,\beta}\|_{\infty\to1}
\leq \|R_{\lambda}\|_{\infty\to1}
-45.6\cdot 10^{-28}.
$$

Finally, $\lim_{n\to\infty}\sup_{g\colon\R^{n}\to B_{n}}\int_{\R^{n}}\|R_{\lambda}g(x)\|_{\ell_{2}(\R^{n})}\gamma_{n}(x)dx=1-\lambda$ by e.g. \cite[Theorem 1]{reeds93}, since the function $f(x)\colonequals x/\|x\|_{2}$ for all $x\in\R^{n}\setminus\{0\}$ has all of its $L_{2}$ Hermite-Fourier mass on the first level, as $n\to\infty$.

So, \eqref{klb} completes the proof, since if $c\colonequals (1-\lambda) / \|R_{\lambda}\|_{\infty\to1}$, with $\lambda=\lambda_*\approx .197\ldots$, then $\|R_{\lambda}\|_{\infty\to1}\approx.4788\ldots$, $c/\|R_{\lambda}\|_{\infty\to1}\geq3.5$, and
$$K_{G}\stackrel{\eqref{klb}}{\geq}\frac{1-\lambda}{\|R_{\lambda}\|_{\infty\to1} - 45.6\cdot 10^{-28}} 
\geq c + c\frac{45.6\cdot 10^{-28}}{\|R_{\lambda}\|_{\infty\to1}}
\geq c + 159.6\cdot 10^{-28}.$$
%
%


\end{proof}

\textbf{Acknowledgement}.  Some of the above material was created with the assistance of OpenAI's ChatGPT 5.2, including some ancillary lemmas and propositions.  However, ChatGPT did not contribute to the strategy of this paper; in fact, many of its strategic suggestions were of negative value.



\bibliographystyle{amsalpha}

\begin{thebibliography}{}

\bibitem[AN04]{alon04}
Noga Alon and Assaf Naor. \emph{Approximating the cut-norm via Grothendieck's inequality}. In Proceedings of the thirty-sixth annual ACM symposium on Theory of computing (STOC '04). Association for Computing Machinery, New York, NY, USA, 72–80, 2004.

\bibitem[BMMN13]{braverman13}
Mark Braverman, Konstantin Makarychev, Yury Makarychev, and Assaf Naor. \emph{The Grothendieck constant is strictly smaller than Krivine's bound}. Forum of Mathematics, Pi. 2013;1:e4. doi:10.1017/fmp.2013.4

\bibitem[D84]{davie84}
A. M. Davie. \emph{A lower bound for $K_{G}$}. Unpublished manuscript, 1984.

\bibitem[G75]{gross75}
Leonard Gross.
\emph{Logarithmic Sobolev Inequalities}.
American Journal of Mathematics.
Vol. 97, No. 4, pp. 1061-1083, 1975.

\bibitem[G53]{groth53}
A. Grothendieck. \emph{Resume de la theorie metrique des produits tensoriels topologiques}. Bol. Soc. Mat. Sao Paulo, 8:1-79, 1953.

\bibitem[K02]{khot02}
Subhash Khot.
\emph{On the power of unique 2-prover 1-round games}. In Proceedings of the thirty-fourth annual ACM symposium on Theory of computing (STOC '02). Association for Computing Machinery, New York, NY, USA, 767–775, 2002.

\bibitem[KN12]{khot12}
S.~A. Khot and A. Naor.
\emph{Grothendieck-type inequalities in combinatorial optimization}. Comm. Pure Appl. Math. {\bf 65}, no.~7, 992--1035, 2012.

\bibitem[K01]{konig01}
H. K\"{o}nig. \emph{On an extremal problem originating in questions of unconditional convergence}. In Recent progress in multivariate approximation (Witten-Bommerholz, 2000), volume 137 of Internat. Ser. Numer. Math., pages 185-192. Birkh\"{a}user, Basel, 2001.

\bibitem[K77]{krivine77}
J.-L. Krivine. \emph{Sur la constante de Grothendieck}. C. R. Acad. Sci. Paris Ser. A-B, 284(8):A445-A446, 1977.

\bibitem[LP68]{linden77}
J. Lindenstrauss and A. Pelczy\'{n}ski. 
\emph{Absolutely summing operators in Lp-spaces and their applications}. Studia Math., 29:275-326, 1968.


\bibitem[P12]{pisier12}
Gilles Pisier.
\emph{Grothendieck's theorem, past and present}. Bull. Amer. Math. Soc. (N.S.) {\bf 49}, no.~2, 237--323, 2012.

\bibitem[RS09]{ragh09}
P. Raghavendra and D. Steurer. 
\emph{Towards computing the Grothendieck constant}. In Proceedings of the Twentieth Annual ACM-SIAM Symposium on Discrete Algorithms, pages 525-534, 2009.

\bibitem[R91]{reeds93}
J. A. Reeds. \emph{A new lower bound on the real Grothendieck constant}. Unpublished manuscript, 1991. 

\bibitem[RN14]{regev14}
Assaf Naor and Oded Regev. \emph{Krivine Schemes are Optimal}. Proceedings of the American Mathematical Society, vol. 142, no. 12, pp. 4315–20, 2014. 

\bibitem[T87]{tsirelson87}
Tsirel'son, B.S. \emph{Quantum analogues of the Bell inequalities. The case of two spatially separated domains}. J Math Sci 36, 557–570, 1987.

\end{thebibliography}

\end{document}